\newtheorem{lemma}{Lemma}[section]
\newtheorem{corollary}[lemma]{Corollary}
\newtheorem{theorem}[lemma]{Theorem}
\newtheorem{assumptions}[lemma]{Standing Assumptions}
\newtheorem{assumption}[lemma]{Standing Assumption}
\theoremstyle{definition} 
\newtheorem{definition}[lemma]{Definition}
\newtheorem{remark}[lemma]{Remark}
\newtheorem{example}[lemma]{Example}
\newtheorem{remarks}[lemma]{Remarks}
\newcommand\rationals{{\mathbb Q}}
\newcommand\reals{{\mathbb R}}
\newcommand{\dg}{\sp{\text{\rm o}}}
\begin{document}

\title{A projection and an effect in a synaptic algebra}

\author{David J. Foulis{\footnote{Emeritus Professor, Department of
Mathematics and Statistics, University of Massachusetts, Amherst,
MA; Postal Address: 1 Sutton Court, Amherst, MA 01002, USA;
foulis@math.umass.edu.}}\hspace{.05 in}, Anna Jen\v cov\'a  and Sylvia
Pulmannov\'{a}{\footnote{ Mathematical Institute, Slovak Academy of
Sciences, \v Stef\'anikova 49, SK-814 73 Bratislava, Slovakia;
pulmann@mat.savba.sk. The second and third authors were supported by
Research and Development Support Agency under the contract No.
APVV-0178-11 and grant VEGA 2/0059/12.}}}

\date{}

\maketitle

\begin{abstract}
We study a pair $p,e$ consisting of a projection $p$ (an idempotent)
and an effect $e$ (an element between $0$ and $1$) in a synaptic
algebra (a generalization of the self-adjoint part of a von Neumann
algebra). We show that some of Halmos's theory of two projections (or
two subspaces), including a version of his CS-decomposition theorem,
applies in this setting, and we introduce and study two candidates
for a commutator projection for $p$ and $e$.
\end{abstract}

{\small \noindent \emph{Mathematics subject Classification.} 17C65, 81P10, 47B15

\noindent \emph{Keywords:} Synaptic algebra, projection, effect, Peirce decomposition, commutator, CBS-decomposition}

\section{Introduction}

In \cite{Halmos}, P. Halmos studied two projection operators $P$ and $Q$
on a Hilbert space and proved a basic theorem, now called the
\emph{CS-decomposition theorem}, that expresses $Q$ in terms of $P$ and
positive contraction operators $C$ and $S$, called the \emph{cosine}
and the \emph{sine} operators, respectively, for $Q$ with respect to
$P$. For a lucid and extended exposition of Halmos's theory of two projections,
see \cite{Guide}. In \cite{FJP2proj}, we proved a generalization of the
CS-decomposition theorem in the setting of a so-called synaptic algebra
\cite[Theorem 5.6]{FJP2proj}.

In what follows, $A$ is a synaptic algebra with enveloping algebra $R\supseteq
A$ \cite{FSynap, FPSynap, TDSA, SymSA, ComSA, PuNote}, $P$ is the orthomodular
lattice \cite{Beran, Kalm} of projections in $A$, and $E$ is the convex effect
algebra \cite{FBEA, GPBB} of all effects in $A$. To help fix ideas, we note
that the self-adjoint part of a von Neumann algebra, and more generally of an
AW$\sp{\ast}$-algebra, forms a synaptic algebra. Numerous additional examples
are given in the literature cited above.

In this article we generalize the CS-decomposition theorem for two projections
$p,q\in P\subseteq A$ to the case of a projection $p\in P$ and an effect
$e\in E$ (Theorem \ref{th:CBSdecomp} below), and we investigate two candidates
for the commutator projection for the pair $p$ and $e$ (Section \ref
{sc:Commutators} below).

In our generalization of the CS-decomposition theorem, which we call the
\emph{CBS-decomposition theorem}, the cosine and sine effects $c$ and
$s$ introduced in \cite[Definition 4.2]{FJP2proj} are generalized
(Definition 3.1 below) and supplemented by a third effect $b$ (Definition
\ref{df:b} below).

Part of our motivation for the work in this article derives from our
interest in \emph{the infimum problem} as applied to the synaptic
algebra $A$, i.e., the problem of determining just when two effects
$e,f\in E$ have an infimum $e\wedge f$ in $E$, and if possible, finding
a perspicuous formula for $e\wedge f$ when it does exist. That
this problem is non-trivial is indicated by a remark of P. Lahti and M.
M\c{a}czynski in \cite[p. 1674]{LM} that the partial order structure of
$E$ is ``rather wild." The development in \cite{GGJ} and \cite{MG}
suggests that it might be possible to make progress on the infimum problem
for $A$ if the problem can be solved for the pair $p,e$ with $p\in P$ and
$e\in E$. We hope that our results in this article will cast some light
on the latter problem. In Section \ref{sc:Appl} below, we illustrate the
utility of the CBS-decomposition theorem by applying it to generalize
a result of T. Moreland and S. Gudder concerning the infimum problem
\cite{MG} to the setting of a synaptic algebra.

\section{Some basic definitions, notation, and facts} \label{sc:Basic}

In this section we briefly outline some notions that we shall need
below. For the definition of a synaptic algebra and more details, see
the literature cited above, especially \cite{FSynap} and \cite{SymSA}.
In what follows, the notation $:=$ means `equals by definition,' the
ordered field of real numbers and its subfield of rational numbers are
denoted by $\reals$ and $\rationals$, and `iff' abbreviates `if and only
if.'

The enveloping algebra $R$ of $A$ is a real linear associative algebra
and if $a,b\in A$, it is understood that the product $ab$, which may
or may not belong to $A$, is calculated in $R$. However, if $a$
commutes with $b$, in symbols $aCb$, then $ab=ba\in A$. The
\emph{commutant} and \emph{bicommutant} of $a$ are defined and denoted
by
\[
C(a):=\{b\in A:aCb\}\text{\ and\ }CC(a):=\{c\in A:c\in C(b)\text
{\ for all\ }b\in C(a)\},
\]
respectively. There is a \emph{unity element} $1\in A$ such that
$1a=a1=a$ for all $a\in A$.

As a subset of $R$, the synaptic algebra $A$ forms a real linear space
which is partially ordered by $\leq$ and for which $1$ is a (strong)
order unit. If $a,b\in A$ and $a\leq b$, we say that $b$ \emph{dominates}
$a$, or equivalently, that $a$ is a \emph{subelement} of $b$.

If $a,b,c\in A$, then $ab+ba,\, abc+cba\in A$. Also $aba\in A$ and
the \emph{quadratic mapping} $b\mapsto aba$ is linear and order
preserving on $A$.

If $0\leq a\in A$, there exists a unique \emph{square root}, denoted
$a\sp{1/2}\in A$ such that $0\leq a\sp{1/2}$ and $(a\sp{1/2})\sp{2}=a$;
moreover $a\sp{1/2}\in CC(a)$. Thus, if $0\leq a$, then $C(a)=
C(a\sp{2})=C(a\sp{1/2})$. If $a\in A$, then $0\leq a\sp{2}$, and
the \emph{absolute value} of $a$ is denoted and defined by $|a|:=
(a\sp{2})\sp{1/2}$. We note that $|a|\in CC(a)$. The \emph{positive
part} of $a$ is denoted and defined by $a\sp{+}:=\frac{1}{2}(|a|+a)$.
Clearly, $a\sp{+}\in CC(a)$.

Partially ordered by the restriction of $\leq$, the set $P:=\{p\in A:
p=p\sp{2}\}$ of \emph{projections} in $A$ forms an \emph{orthomodular
lattice} (OML) \cite{Beran, Kalm}, \cite[\S 5]{FSynap} with $p\mapsto
p\sp{\perp}:=1-p$ as the \emph{orthocomplementation}. The meet (greatest
lower bound) and join (least upper bound) of projections $p,q\in P$
are denoted by $p\wedge q$ and $p\vee q$, respectively. The projections
$p,q\in P$ are \emph{orthogonal}, in symbols $p\perp q$, iff $p\leq
q\sp{\perp}$, and it turns out that $p\perp q\Rightarrow p+q=p\vee q$.
A minimal nonzero projection in $P$ is called an \emph{atom}. If $p,q
\in P$ and $p$ is an atom, then either $p\wedge q=p$ (i.e., $p\leq q$)
or else $p\wedge q=0$.

Calculations in the OML $P$ are facilitated by the following theorem
\cite[Theorem 5, p. 25]{Kalm}.

\begin{theorem} \label{th:distributive}
For $p,q,r\in P$, if any two of the relations $pCq$, $pCr$, or $qCr$
hold, then $p\wedge(q\vee r)=(p\wedge q)\vee(p\wedge r)$ and $p\vee
(q\wedge r)=(p\vee q)\wedge(p\vee r)$.
\end{theorem}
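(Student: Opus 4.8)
The statement is the meet-and-join fragment of the Foulis--Holland theorem, and I would prove it by the classical order-theoretic route, using only the orthomodular law together with the elementary facts about commutation in an OML: the relation $C$ is symmetric, is preserved by orthocomplementation, and is closed under finite meets and joins in the sense that $aCb$ and $aCc$ imply $aC(b\wedge c)$ and $aC(b\vee c)$ (these are standard; the closure statement itself reduces to the orthomodular law via the auxiliary identity $u\perp v\Rightarrow(u\vee v)\wedge v\sp{\perp}=u$). Two preliminary reductions: first, since $pCq,pCr,qCr$ are equivalent to $p\sp{\perp}Cq\sp{\perp},p\sp{\perp}Cr\sp{\perp},q\sp{\perp}Cr\sp{\perp}$, De Morgan duality lets us recover the join identity for $p,q,r$ from the meet identity applied to $p\sp{\perp},q\sp{\perp},r\sp{\perp}$, so it suffices to prove $p\wedge(q\vee r)=(p\wedge q)\vee(p\wedge r)$; second, this identity is symmetric in $q$ and $r$, so the three hypotheses collapse to two cases, (A) $pCq$ and $pCr$, and (B) $qCp$ and $qCr$.

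In either case the inclusion $(p\wedge q)\vee(p\wedge r)\le p\wedge(q\vee r)$ is automatic, so with $x:=p\wedge(q\vee r)$ and $y:=(p\wedge q)\vee(p\wedge r)$ we have $y\le x$, and by the orthomodular law it is enough to show $x\wedge y\sp{\perp}=0$. For case (A) I would argue directly: from $pCq$ one gets $p\wedge(p\wedge q)\sp{\perp}=p\wedge q\sp{\perp}$ (apply $u\perp v\Rightarrow(u\vee v)\wedge v\sp{\perp}=u$ to the decomposition $p=(p\wedge q)\vee(p\wedge q\sp{\perp})$), and similarly $p\wedge(p\wedge r)\sp{\perp}=p\wedge r\sp{\perp}$; substituting these into $x\wedge y\sp{\perp}=p\wedge(q\vee r)\wedge(p\wedge q)\sp{\perp}\wedge(p\wedge r)\sp{\perp}$ collapses it to something $\le(q\vee r)\wedge q\sp{\perp}\wedge r\sp{\perp}=(q\vee r)\wedge(q\vee r)\sp{\perp}=0$.

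For case (B) I would exploit that $q$ commutes with everything in sight: $q$ commutes with $q\vee r$ (as $q\le q\vee r$), with $p$, and hence with $x$; and with $p\wedge q$ and $p\wedge r$, hence with $y$ — and the same for $q\sp{\perp}$. Thus $x=(x\wedge q)\vee(x\wedge q\sp{\perp})$ and $y=(y\wedge q)\vee(y\wedge q\sp{\perp})$, and it remains to match the two pairs of components. Here $x\wedge q=p\wedge q$ and $x\wedge q\sp{\perp}=p\wedge r\wedge q\sp{\perp}$ (the latter using $(q\vee r)\wedge q\sp{\perp}=r\wedge q\sp{\perp}$, which is valid since $qCr$), while $y\wedge q$ and $y\wedge q\sp{\perp}$ are computed by distributing $q$, respectively $q\sp{\perp}$, over $(p\wedge q)\vee(p\wedge r)$ — which is legitimate precisely because $q$ (resp. $q\sp{\perp}$) commutes with both $p\wedge q$ and $p\wedge r$, i.e.\ this is an instance of case (A), already established. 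One gets $y\wedge q=(p\wedge q)\vee(p\wedge q\wedge r)=p\wedge q$ and $y\wedge q\sp{\perp}=0\vee(p\wedge r\wedge q\sp{\perp})=p\wedge r\wedge q\sp{\perp}$; the components agree, so $x=y$, and the join identity follows by orthocomplementation as noted.

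I expect the only genuine obstacle to be case (B), and specifically the point that one needs the closure of commutation under lattice operations in order to ``split along $q$'' and reduce (B) to (A); granting that lemma (or simply citing it, e.g.\ from \cite{Kalm} or \cite{Beran}), the rest is routine bookkeeping with the orthomodular law. Alternatively, since in a synaptic algebra commutation of projections is the algebraic relation $pq=qp$ and commuting projections satisfy $p\wedge q=pq$ and $p\vee q=p+q-pq$, one could instead verify the two identities by direct computation inside $A$; but the purely order-theoretic argument above is cleaner and is exactly the content of the cited theorem.
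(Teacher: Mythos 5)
Your proof is correct, but there is nothing in the paper to compare it against: the authors do not prove Theorem \ref{th:distributive} at all, they quote it as known background from Kalmbach's book ({\cite[Theorem 5, p.~25]{Kalm}}). What you have supplied is essentially the classical order-theoretic proof of the Foulis--Holland theorem, and it is sound. The De Morgan reduction to the meet identity, the two-case split (the ``central'' element is $p$, or it is one of $q,r$), the use of the orthomodular law in the form ``$y\leq x$ and $x\wedge y\sp{\perp}=0$ imply $x=y$,'' and the splitting of case (B) along $q$ and $q\sp{\perp}$ followed by an appeal to the already-proved case (A) are all correct. The one external ingredient is the closure of the commutant of a projection under finite meets and joins, which you rightly isolate as the only nontrivial prerequisite; it is a standard pre-Foulis--Holland lemma (for joins it follows almost immediately from the characterization $qCw\Leftrightarrow w=(w\wedge q)\vee(w\wedge q\sp{\perp})$ together with symmetry of $C$, and the meet case follows by De Morgan), so there is no circularity, and citing it from \cite{Kalm} or \cite{Beran} is entirely in the spirit of the paper, which cites the whole theorem from the same source. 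Your closing observation is also apt: in the synaptic-algebra setting one could verify both identities by brute-force multiplication of commuting idempotents using $p\wedge q=pq$ and $p\vee q=p+q-pq$ (Lemma \ref{lm:eCf} and Lemma \ref{lm:infsupinP} make this legitimate), but that argument is specific to $P\subseteq A$, whereas your lattice-theoretic argument proves the statement in any OML, which is the generality in which the paper actually invokes it.
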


To each element $a\in A$ is associated a unique projection $a\dg
\in P$ called the \emph{carrier} of $a$ such that, for all $b\in A$,
$ab=0\Leftrightarrow a\dg b=0\Leftrightarrow ba\dg=0\Leftrightarrow
ba=0$.	It turns out that $aa\dg=a\dg a=a$, $a\dg\in CC(a)$, $(a\sp{2})
\dg=a\dg$, $|a|\dg=a\dg$, and if $p\in P$ and $e\in E$, then $p\dg=p$
and $e\leq e\dg$. Also, $0\leq a\leq b\Rightarrow a\dg\leq b\dg$.
Moreover, if $p,q\in P$, then $(pqp)\dg=p\wedge(p\sp{\perp}\vee q)$
\cite[Theorem 5.6]{FSynap}.

We shall have use for the next two lemmas which follow from \cite
[Lemma 4.1]{SymSA} and \cite[Theorem 5.5]{ComSA}.

\begin{lemma} \label{lm:carrierofsum}
If $0\leq a\sb{1}, a\sb{2},..., a\sb{n}\in A$, then
$(\sum\sb{i=1}\sp{n}a\sb{i})\dg=\bigvee\sb{i=1}\sp{n}(a\sb{i})\dg$.
\end{lemma}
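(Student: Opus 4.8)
My plan is to establish the two opposite inequalities between $p:=(\sum_{i=1}^{n}a_i)\dg$ and $q:=\bigvee_{i=1}^{n}(a_i)\dg$ directly, without first reducing to the case $n=2$. The inequality $q\le p$ is immediate from the monotonicity of the carrier on the positive cone recalled in Section~\ref{sc:Basic}: for each fixed $i$ the remaining summands are $\ge 0$, so $0\le a_i\le\sum_{j=1}^{n}a_j$ and hence $(a_i)\dg\le p$; taking the join over $i$ then gives $q\le p$.

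For the reverse inequality $p\le q$, I would pass to $q^{\perp}=1-q$. By the de Morgan law in the OML $P$, $q^{\perp}=\bigwedge_{i=1}^{n}\bigl((a_i)\dg\bigr)^{\perp}$, so $q^{\perp}\le\bigl((a_i)\dg\bigr)^{\perp}$ for every $i$; thus $q^{\perp}$ is orthogonal to $(a_i)\dg$, and since orthogonal projections annihilate one another (if $r\perp t$ then $r+t=r\vee t\in P$, and expanding $(r+t)^2=r+t$ forces $rt=tr=0$) we get $q^{\perp}(a_i)\dg=0$. Using $(a_i)\dg a_i=a_i$ this yields $q^{\perp}a_i=q^{\perp}\bigl((a_i)\dg a_i\bigr)=0$ for each $i$, and summing gives $q^{\perp}\sum_{i=1}^{n}a_i=0$. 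The defining property of the carrier of $\sum_{i=1}^{n}a_i$ then promotes this to $q^{\perp}p=0=pq^{\perp}$, whence $q-p$ is idempotent, hence a projection, hence $\ge 0$, i.e.\ $p\le q$. Together with the first paragraph this gives $p=q$.

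I do not see a deep difficulty in this argument. The inequality $q\le p$ is pure monotonicity of the carrier (a fact stated in Section~\ref{sc:Basic}), and $p\le q$ is the de Morgan law in $P$ plus one use of the carrier axiom. The main (and essentially only) point that takes a moment of care is the last step, converting the annihilation identity $q^{\perp}\sum_i a_i=0$ back into the lattice inequality $p\le q$; this is exactly what the two-sided form $xy=0\Leftrightarrow x\dg y=0\Leftrightarrow yx\dg=0\Leftrightarrow yx=0$ of the carrier axiom is for. (Alternatively, as the surrounding text indicates, the lemma can be deduced from \cite[Lemma~4.1]{SymSA}, for instance by induction from its two-term instance.)
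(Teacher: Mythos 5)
Your argument is correct, but note that the paper itself offers no internal proof of Lemma~\ref{lm:carrierofsum}: it is stated as a consequence of \cite[Lemma 4.1]{SymSA}, so what you have produced is a self-contained derivation where the paper simply delegates to an external reference. Every ingredient you use is indeed available in Section~\ref{sc:Basic}. For $q\leq p$, the monotonicity $0\leq a\leq b\Rightarrow a\dg\leq b\dg$ applied to $0\leq a_i\leq\sum_j a_j$ does the job immediately. For $p\leq q$, the chain $q^{\perp}(a_i)\dg=0\Rightarrow q^{\perp}a_i=q^{\perp}(a_i)\dg a_i=0\Rightarrow q^{\perp}\sum_i a_i=0\Rightarrow q^{\perp}\bigl(\sum_i a_i\bigr)\dg=0$ is a correct use of $a\dg a=a$ together with the two-sided carrier property $ba=0\Leftrightarrow ba\dg=0\Leftrightarrow a\dg b=0$ (all computations taking place harmlessly in the enveloping algebra $R$). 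Your two auxiliary observations also check out: orthogonal projections annihilate one another (from $(r+t)^2=r+t$ one gets $rt+tr=0$, and multiplying on the left and right by $r$ yields $rt=tr$, hence $2rt=0$; alternatively this is Theorem~\ref{th:effleqproj} applied to $r\leq t^{\perp}$), and $pq^{\perp}=q^{\perp}p=0$ forces $pq=qp=p$, so $q-p$ is an idempotent, hence a projection, hence positive. The trade-off is the usual one: the paper keeps its preliminaries short by citing \cite{SymSA}, while your proof makes the lemma verifiable from the axioms and recalled facts alone, at the cost of a few lines; both are legitimate, and your version has the advantage of not requiring the reader to consult the earlier paper.
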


\begin{lemma} \label{lm:carrierofprod}
If $a,b,ab\in A$, then $(ab)\dg=a\dg b\dg=b\dg a\dg=a\dg\wedge b\dg$.
\end{lemma}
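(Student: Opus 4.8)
The plan is to establish the fourfold equality in two stages. First I would show that $a\dg$ and $b\dg$ commute — so that $a\dg b\dg=b\dg a\dg$ is a projection, equal to $a\dg\wedge b\dg$ — and second I would identify this common projection with the carrier $(ab)\dg$ by pushing carriers through the product $ab$.

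For the first stage, the decisive point is that the hypothesis $ab\in A$ already forces $aCb$. Since $ab+ba\in A$ always, we get $ba=(ab+ba)-ab\in A$, so both $ab$ and $ba$ lie in $A$; the synaptic-algebra structure theory (this is precisely where \cite[Theorem 5.5]{ComSA} enters) then yields $ab=ba$, i.e., $aCb$. Granting $aCb$, the rest is routine: because $a\dg\in CC(a)$ and $b\dg\in CC(b)$, commutativity propagates to $aCb\dg$, $a\dg Cb$, and hence $a\dg Cb\dg$; thus $a\dg b\dg=b\dg a\dg$ is idempotent, so it lies in $P$, and by the standard description of the meet of two commuting projections in the OML $P$ (cf. \cite[\S5]{FSynap}, with Theorem \ref{th:distributive} in the background) we obtain $a\dg b\dg=b\dg a\dg=a\dg\wedge b\dg$. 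Write $r$ for this projection.

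It remains to prove $(ab)\dg=r$, which I would do by two inequalities. For $(ab)\dg\leq a\dg\wedge b\dg$ it suffices to check $(ab)\dg\leq b\dg$ and $(ab)\dg\leq a\dg$ separately. Since $b\,b\dg=b$ we have $ab(1-b\dg)=a(b-b\,b\dg)=0$, and because $1-b\dg\in A$ the carrier characterization of $ab$ gives $(ab)\dg\leq b\dg$; applying the same argument to $ba$ (legitimate since $ab=ba$) and using $(ba)\dg=(ab)\dg$ gives $(ab)\dg\leq a\dg$. For the reverse inequality $r\leq(ab)\dg$, put $t:=1-(ab)\dg$, so $abt=0$. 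Using the carrier property for elements of the enveloping algebra $R$ (from $xz=0$ one gets $x\dg z=0$ for $x\in A$, $z\in R$; see \cite{FSynap}), together with $a\dg Cb$ and $a\dg Cb\dg$, one pushes the carriers through: $abt=0$ gives $a\dg bt=0$, hence $ba\dg t=0$, hence $b\dg a\dg t=0$, i.e., $rt=0$; therefore $r\leq 1-t=(ab)\dg$. The two inequalities give $(ab)\dg=r=a\dg\wedge b\dg=a\dg b\dg=b\dg a\dg$.

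The hard part is the commutation assertion $ab\in A\Rightarrow aCb$; once it is available, the argument is mere bookkeeping with carriers and the lattice operations of $P$. This is why the lemma is derived from the literature rather than by a short self-contained computation, and it is also why the statement explicitly records the a priori nonobvious identity $a\dg b\dg=b\dg a\dg$. A secondary point to handle carefully is the passage to elements of $R$ in the last chain of implications; this is standard but should be cited.
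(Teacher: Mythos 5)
The paper itself does not prove this lemma at all: it imports it from the literature (``the next two lemmas \ldots follow from \cite[Lemma 4.1]{SymSA} and \cite[Theorem 5.5]{ComSA}''), so you are supplying strictly more detail than the authors do. Your architecture is sound, and you correctly isolate the crux: the implication $ab\in A\Rightarrow aCb$ is precisely the nontrivial input from \cite{ComSA}, and once it is granted, $a\dg Cb\dg$ follows from the bicommutant properties of carriers, the identification $a\dg b\dg=b\dg a\dg=a\dg\wedge b\dg$ follows from Lemma \ref{lm:eCf}\,(ii) together with Lemma \ref{lm:infsupinP}, and your proof of $(ab)\dg\leq a\dg\wedge b\dg$ is correct as written, since there the right factors $1-a\dg$ and $1-b\dg$ lie in $A$, so the carrier property applies in the form the paper states it.

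The one genuine gap is in the reverse inequality. Your chain $abt=0\Rightarrow a\dg bt=0\Rightarrow ba\dg t=0\Rightarrow b\dg a\dg t=0$ applies the cancellation property of carriers to the right factors $bt$ and $a\dg t$, which you treat as general elements of the enveloping algebra $R$, invoking a principle ``$xz=0\Rightarrow x\dg z=0$ for $x\in A$, $z\in R$.'' The paper states the defining property of $x\dg$ only for right factors in $A$, and the $R$-version is not available as a black box: $R$ carries no topology and no involution, so the usual justifications (strong limits of polynomials in $x$ without constant term, or kernel--range duality via adjoints) are not at hand, and this auxiliary claim would itself require proof. Fortunately the steps can be repaired entirely inside $A$: because $aCb$, each of $a$, $b$, $a\dg$, $b\dg$ lies in $C(ab)$, so $(ab)\dg\in CC(ab)$ commutes with all of them; hence $t:=1-(ab)\dg$ commutes with $b$ and with $a\dg$, so $bt=tb\in A$ and $a\dg t=ta\dg\in A$, and each application of the carrier property in your chain is then legitimate exactly as the paper states it. With that observation added (or with the equivalent reformulation $((ab)\dg{}\sp{\perp}b\dg)a=0\Rightarrow a\dg\,(ab)\dg{}\sp{\perp}b\dg=0$ using only products of commuting elements of $A$), your proof is complete.
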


The set $E:=\{e\in A:0\leq e\leq 1\}$ of \emph{effect elements}
(or for short, simply \emph{effects}) in $A$ forms a convex
effect algebra \cite{FBEA, GPBB}. If $e\in E$, then the \emph
{orthosupplement of $e$} is denoted and defined by $e\sp{\perp}
:=1-e\in E$. Two effects $e$ and $f$ are \emph{disjoint} iff
the only effect $g\in E$ with $g\leq e,f$ is $g=0$. Every projection
is an effect, i.e., $P\subseteq E$; in fact, $P$ is the extreme boundary
of the convex set $E$.

\begin{lemma} \label{lm:infsupinP}
Let $p,q\in P$. Then{\rm: (i)} The infimum $p\wedge q$ of $p$ and
$q$ in $P$ is also the infimum of $p$ and $q$ in $E$. {\rm (ii)}
The supremum $p\vee q$ of $p$ and $q$ in $P$ is also the supremum
of $p$ and $q$ in $E$.
\end{lemma}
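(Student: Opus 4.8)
The plan is to deduce both parts from the behaviour of the carrier map $a\mapsto a\dg$ under the order, together with the De Morgan laws in the orthomodular lattice $P$ and the order-reversing nature of the orthosupplementation $e\mapsto 1-e$.

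For (i): since the order on $P$ is the restriction of the order on $A$, the element $p\wedge q$ (the meet computed in $P$) is automatically a lower bound of $\{p,q\}$ in $E$, so it remains only to check that it is the greatest such. Given $g\in E$ with $g\le p$ and $g\le q$, I would invoke the fact that $0\le a\le b$ implies $a\dg\le b\dg$, together with $p\dg=p$ and $q\dg=q$, to get $g\dg\le p$ and $g\dg\le q$; hence $g\dg\le p\wedge q$ in $P$. Since $g\le g\dg$ for every effect, this yields $g\le p\wedge q$, as required.

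For (ii): I would dualize. Again $p\vee q$ (the join in $P$) is an upper bound of $\{p,q\}$ in $E$; suppose $g\in E$ with $p\le g$ and $q\le g$. Applying the order-reversing map $e\mapsto 1-e$ gives $g^{\perp}\le p^{\perp}$ and $g^{\perp}\le q^{\perp}$, with $p^{\perp},q^{\perp}\in P$ and $g^{\perp}\in E$, so the argument of part (i) applied to the effect $g^{\perp}$ produces $(g^{\perp})\dg\le p^{\perp}\wedge q^{\perp}$. By the De Morgan law in the OML $P$ we have $p^{\perp}\wedge q^{\perp}=(p\vee q)^{\perp}$, and since $g^{\perp}\le(g^{\perp})\dg$ we obtain $g^{\perp}\le(p\vee q)^{\perp}$. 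Applying $e\mapsto 1-e$ once more reverses this to $p\vee q\le g$, so $p\vee q$ is indeed the supremum of $p$ and $q$ in $E$.

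There is essentially no serious obstacle here: the proof is short, and the only step needing a moment's attention is the inequality $g\dg\le p$ for an effect $g$ lying below a projection $p$, which is immediate from $0\le a\le b\Rightarrow a\dg\le b\dg$ together with $p\dg=p$. In (ii) one must also be careful to invoke the correct De Morgan identity and to track the two applications of the order-reversing orthosupplementation.
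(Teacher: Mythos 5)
Your proposal is correct and follows essentially the same route as the paper: part (i) via $g\le g\dg\le p\wedge q$ using monotonicity of the carrier, and part (ii) by passing to orthosupplements and applying the carrier argument to $g^{\perp}$ (the paper phrases the last step by orthocomplementing $(g^{\perp})\dg\le p^{\perp},q^{\perp}$ directly rather than citing De Morgan, but this is the same computation).
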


\begin{proof}
(i) Of course $p\wedge q\leq p,q$, and it remains to prove that if
$e\in E$ with $e\leq p,q$, then $e\leq p\wedge q$. But, if $e\leq p,q$,
then $e\dg\leq p,q$, whence $e\leq e\dg\leq p\wedge q$.

(ii) Of course $p,q\leq p\vee q$, and it remains to prove that if $e
\in E$ with $p,q\leq e$, then $p\vee q\leq e$. So assume that $p,q
\leq e$, and therefore that $e\sp{\perp}\leq p\sp{\perp},q\sp{\perp}$.
It follows that $(e\sp{\perp})\dg\leq p\sp{\perp},q\sp{\perp}$, whence
$p,q\leq((e\sp{\perp})\dg)\sp{\perp}\in P$.  Consequently, $p\vee q
\leq((e\sp{\perp})\dg)\sp{\perp}$. But $e\sp{\perp}\leq (e\sp{\perp})
\dg$, so $((e\sp{\perp})\dg)\sp{\perp}\leq e\sp{\perp\perp}=e$, and we
have $p\vee q\leq e$.
\end{proof}

In view of Lemma \ref{lm:infsupinP}, no confusion will result if an
existing infimum (respectively, supremum) in $E$ of effects $e,f\in E$
is denoted by $e\wedge f$ (respectively, by $e\vee f$).

By \cite[Theorem 2.6 (v)]{FSynap}, an effect $e\in E$ is a projection iff
$e$ is sharp, i.e., iff $e$ is disjoint from its own orthosupplement
$e\sp{\perp}$ iff $e\wedge e\sp{\perp}=0$. Moreover, the carrier
$e\dg$ of an effect $e\in E$ is the smallest projection that dominates
$e$, so $E$ is a \emph{sharply dominating} effect algebra \cite{SPGSD}.

The next theorem and its corollary provide useful ways to stipulate that
a projection $p$ either dominates or is dominated by an effect $e$.

\begin{theorem} [{\cite[Theorem 2.4]{FSynap}}] \label{th:effleqproj}
Let $p\in P$ and $e\in E$. Then the following conditions are mutually
equivalent{\rm: (i)} $e\leq p$.
{\rm(ii)} $e=ep=pe$. {\rm(iii)} $e=pep$. {\rm(iv)} $e=ep$. {\rm(v)}
$e=pe$.
\end{theorem}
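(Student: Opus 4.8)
The plan is to prove the cyclic chain of implications (i) $\Rightarrow$ (ii) $\Rightarrow$ (iii) $\Rightarrow$ (iv) $\Rightarrow$ (v) $\Rightarrow$ (i), exploiting the carrier projection machinery and the order-preserving quadratic map $b \mapsto pbp$. First I would handle (i) $\Rightarrow$ (ii): if $e \leq p$, then since $p^{\perp} = 1-p$ is a projection and $0 \leq e \leq p$, I would apply the quadratic map $b \mapsto p^{\perp} b p^{\perp}$ to get $0 \leq p^{\perp} e p^{\perp} \leq p^{\perp} p p^{\perp} = 0$, so $p^{\perp} e p^{\perp} = 0$. Taking square roots (note $p^{\perp} e p^{\perp} = (e^{1/2} p^{\perp})(p^{\perp} e^{1/2})$-style manipulation, or more directly writing $0 = p^{\perp} e p^{\perp} = (p^{\perp} e^{1/2})(e^{1/2} p^{\perp})$ and using that $x x^{*}$-type products vanish iff the factor does — here via the carrier) forces $e^{1/2} p^{\perp} = 0$, hence $e p^{\perp} = 0$, i.e. $e = ep$; taking adjoints/using symmetry gives $e = pe$ as well, so $e = ep = pe$. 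A cleaner route: $e p^{\perp} = 0 \Leftrightarrow e^{\dagger} p^{\perp} = 0 \Leftrightarrow e^{\dagger} \leq p$, and since $e \leq e^{\dagger}$, the condition $e \leq p$ is equivalent to $e^{\dagger} \leq p$; then $e^{\dagger} \leq p$ gives $e^{\dagger} p = e^{\dagger} = p e^{\dagger}$ (projections below a projection commute with it, using Theorem \ref{th:distributive} or directly), and multiplying $e = e e^{\dagger}$ through by $p$ yields (ii).

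The implications (ii) $\Rightarrow$ (iii) $\Rightarrow$ (iv) are essentially immediate: from $e = ep = pe$ we get $pep = p(ep) = p(e p) $; more carefully $pep = (pe)p = ep \cdot$ wait — $pep = p \cdot ep = p \cdot e$ using $ep = e$? No: (ii) says $e = ep$, so $pep = p e p = (pe) p = e p = e$, giving (iii); and (iii) together with (ii) or directly, $e = pep = (pe)p = ep$ using $pe = e$, giving (iv). Honestly (ii) trivially implies both (iv) and (v) by dropping one factor, and (iii) follows from (ii) by $pep = p(ep) = pe = e$. The only implication requiring real work besides (i) $\Rightarrow$ (ii) is closing the loop, say (iv) $\Rightarrow$ (i) (and symmetrically (v) $\Rightarrow$ (i), or (iii) $\Rightarrow$ (i)). For (iv) $\Rightarrow$ (i): assume $e = ep$. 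Then $e p^{\perp} = e(1-p) = e - ep = 0$, so $e$ annihilates $p^{\perp}$ on the right; by the carrier characterization this means $e^{\dagger} p^{\perp} = 0$, i.e. $e^{\dagger} \leq p$ (since $e^{\dagger} = e^{\dagger} - e^{\dagger} p^{\perp} = e^{\dagger} p$ forces $e^{\dagger} \leq p$ by Theorem \ref{th:effleqproj}... no, that's circular at the effect level but fine at the projection level: $e^{\dagger} p^{\perp} = 0$ with both projections gives $e^{\dagger} \wedge p^{\perp} $-type reasoning, or simply $e^{\dagger} = e^{\dagger}(p + p^{\perp}) = e^{\dagger} p \leq p$). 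Then $e \leq e^{\dagger} \leq p$, which is (i).

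The main obstacle I anticipate is the step extracting $e p^{\perp} = 0$ from $p^{\perp} e p^{\perp} = 0$ in the proof of (i) $\Rightarrow$ (ii) without circularity — i.e., justifying that a positive element squeezed to zero by a quadratic map has the corresponding one-sided product vanish. The clean fix is to use the carrier: from $p^{\perp} e p^{\perp} = 0$ and $0 \leq e$ write $e^{1/2} \in CC(e)$, note $(e^{1/2} p^{\perp})^{\dagger} \cdot$-type arguments, or better, invoke that $p^{\perp} e p^{\perp} = p^{\perp} e^{1/2} \cdot e^{1/2} p^{\perp}$ and apply Lemma \ref{lm:carrierofprod} / the defining property of carriers to conclude $e^{1/2} p^{\perp} = 0$, hence $e p^{\perp} = e^{1/2}(e^{1/2} p^{\perp}) = 0$. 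Alternatively — and this is probably the slickest presentation — I would reorganize the whole proof around the single equivalence $e \leq p \Leftrightarrow e^{\dagger} \leq p$ (valid since $e \leq e^{\dagger}$ and $e^{\dagger}$ is the least projection above $e$) combined with the fact, available from the OML structure via Theorem \ref{th:distributive}, that for projections $q \leq p$ one has $q = qp = pq$; then each of (ii)–(v) is shown equivalent to $e^{\dagger} \leq p$ by multiplying the carrier identity $e = e e^{\dagger} = e^{\dagger} e$ by $p$ and by using $e p^{\perp} = 0 \Leftrightarrow e^{\dagger} p^{\perp} = 0$, which sidesteps all square-root gymnastics.
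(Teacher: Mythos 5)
The paper does not actually prove this theorem: it is imported verbatim from \cite[Theorem 2.4]{FSynap}, so there is no in-paper argument to compare yours against. On its own merits your proof is essentially correct, and the ``slickest presentation'' you sketch at the end---running everything through the carrier $e\dg$---is the version to keep: $e\leq p$ iff $e\dg\leq p$ (since $e\dg$ is the least projection dominating $e$ and $e\leq e\dg$); comparable projections commute, so $e\dg p=pe\dg=e\dg$; and multiplying $e=ee\dg=e\dg e$ by $p$ yields (ii). Conversely each of (iv) and (v) forces $e\dg p\sp{\perp}=0$ via the defining property of the carrier ($ep\sp{\perp}=0\Leftrightarrow e\dg p\sp{\perp}=0\Leftrightarrow p\sp{\perp}e\dg=0\Leftrightarrow p\sp{\perp}e=0$), whence $e\leq e\dg=pe\dg p\leq p$; note it is this carrier property, not ``taking adjoints,'' that converts $ep\sp{\perp}=0$ into $p\sp{\perp}e=0$, since no involution on $R$ is posited. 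Two points need tightening. First, in your opening argument the passage from $p\sp{\perp}ep\sp{\perp}=0$ to $ep\sp{\perp}=0$ cannot be justified by Lemma \ref{lm:carrierofprod}, which requires $a,b,ab\in A$, whereas $p\sp{\perp}e\sp{1/2}$ need not lie in $A$; the correct citation is the synaptic-algebra axiom that $aba=0$ with $0\leq b$ forces $ab=ba=0$ (\cite[Axiom SA4]{FSynap}, which this paper itself invokes in the proof of Theorem \ref{th:MGL3.8})---or simply bypass the issue with the carrier route. Second, you leave the implication out of (iii) parenthetical; it should be stated explicitly, though it is a one-liner: $e=pep\leq p1p=p$ because the quadratic map $b\mapsto pbp$ is order preserving.
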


\begin{corollary} \label{co:projleqeff}
If $p\in P$ and $e\in E$, then the following conditions are mutually
equivalent{\rm: (i)} $p\leq e$. {\rm(ii)} $p=ep=pe$. {\rm(iii)} $p=ep$.
{\rm(iv)} $p=pe$.
\end{corollary}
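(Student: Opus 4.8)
The plan is to deduce Corollary~\ref{co:projleqeff} from Theorem~\ref{th:effleqproj} by the standard duality between an effect $e$ and its orthosupplement $e\sp{\perp}=1-e$, together with the observation that a projection $p$ is dominated by an effect $e$ precisely when the projection $p\sp{\perp}=1-p$ dominates the effect $e\sp{\perp}$. Concretely, $p\leq e$ iff $e\sp{\perp}\leq p\sp{\perp}$, and since $p\sp{\perp}\in P$ and $e\sp{\perp}\in E$, Theorem~\ref{th:effleqproj} applies with the projection $p\sp{\perp}$ in place of $p$ and the effect $e\sp{\perp}$ in place of $e$.

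First I would record the equivalence (i)$\Leftrightarrow$ ``$e\sp{\perp}\leq p\sp{\perp}$'', which is immediate from the definition of the order and the fact that subtraction from $1$ reverses $\leq$. Next I would invoke Theorem~\ref{th:effleqproj} (with $p\sp{\perp},e\sp{\perp}$) to see that $e\sp{\perp}\leq p\sp{\perp}$ is equivalent to each of: $e\sp{\perp}=e\sp{\perp}p\sp{\perp}=p\sp{\perp}e\sp{\perp}$; $e\sp{\perp}=p\sp{\perp}e\sp{\perp}p\sp{\perp}$; $e\sp{\perp}=e\sp{\perp}p\sp{\perp}$; and $e\sp{\perp}=p\sp{\perp}e\sp{\perp}$. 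Then I would translate each of these back into a statement about $p$ and $e$ by substituting $p\sp{\perp}=1-p$, $e\sp{\perp}=1-e$, and expanding, using that $p\sp{2}=p$. For instance, $e\sp{\perp}=e\sp{\perp}p\sp{\perp}$ reads $(1-e)=(1-e)(1-p)=1-p-e+ep$, which simplifies to $p=ep$; similarly $e\sp{\perp}=p\sp{\perp}e\sp{\perp}$ gives $p=pe$, and the two-sided condition $e\sp{\perp}=e\sp{\perp}p\sp{\perp}=p\sp{\perp}e\sp{\perp}$ gives $p=ep=pe$. The condition $e\sp{\perp}=p\sp{\perp}e\sp{\perp}p\sp{\perp}$ expands to $1-e=(1-p)(1-e)(1-p)$; multiplying out the right side gives $1-p-p+p e p + p + (\text{lower terms})$, and after the routine cancellation this reduces to $p=pep$, which in the presence of $p=ep=pe$ is of course automatic, but can also be folded into the list of equivalents. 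This accounts for conditions (ii), (iii), (iv) of the corollary; note that the corollary has no analogue of condition (v) of the theorem precisely because, after the $\perp$-translation, $e\sp{\perp}=e\sp{\perp}p\sp{\perp}$ and $e\sp{\perp}=p\sp{\perp}e\sp{\perp}$ collapse to the symmetric pair $p=ep$ and $p=pe$ rather than four distinct statements.

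I do not anticipate a genuine obstacle here; the only thing to be careful about is the bookkeeping in the expansions, since products in $R$ need not lie in $A$ and need not commute a priori — but every product appearing is of the form $p\sp{\perp}e\sp{\perp}$, $e\sp{\perp}p\sp{\perp}$, or $p\sp{\perp}e\sp{\perp}p\sp{\perp}$, and these are exactly the combinations handled by Theorem~\ref{th:effleqproj}, so no unjustified manipulation is needed. An alternative, essentially equivalent route would be to prove the implications directly in a cycle (i)$\Rightarrow$(ii)$\Rightarrow$(iii)$\Rightarrow$(i) and (ii)$\Rightarrow$(iv)$\Rightarrow$(ii), but the orthosupplement trick is cleaner and reuses the already-established theorem with essentially no new work.
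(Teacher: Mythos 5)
Your proposal is correct and is essentially identical to the paper's own proof: both apply Theorem~\ref{th:effleqproj} with $p\sp{\perp}$ and $e\sp{\perp}$ in place of $p$ and $e$, use $p\leq e\Leftrightarrow e\sp{\perp}\leq p\sp{\perp}$, and expand the resulting identities to recover $p=ep$, $p=pe$, and $p=ep=pe$. (One tiny bookkeeping slip: $e\sp{\perp}=p\sp{\perp}e\sp{\perp}p\sp{\perp}$ expands to $p=ep+pe-pep$ rather than literally $p=pep$, but this condition is not part of the corollary's list, so nothing is affected.)
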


\begin{proof} In Theorem \ref{th:effleqproj}, replace $e$ by $e
\sp{\perp}=1-e$ and $p$ by $p\sp{\perp}=1-p$. Then $p\leq e
\Leftrightarrow e\sp{\perp}\leq p\sp{\perp}$, $1-e=(1-e)(1-p)
\Leftrightarrow p=ep$, and $1-e=(1-p)(1-e)\Leftrightarrow p=pe$.
\end{proof}

As a consequence of Theorem \ref{th:effleqproj} and its corollary,
if a projection $p$ and an effect $e$ are comparable (i.e, $e\leq p$
or $p\leq e$), then $pCe$. One of the reasons that the order structure
of $E$ is so ``wild" is that the same does not hold for two effects.

\begin{lemma}  \label{lm:eCf}
Suppose that $e,f\in E$ and $p\in P$. Then{\rm: (i)} If $eCf$, then
$ef\in E$ and $ef\leq e,f$. {\rm(ii)} If $pCf$, then $pf=fp=pfp=
p\wedge f$, the infimum of $p$ and $f$ in $E$.
\end{lemma}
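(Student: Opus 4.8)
The plan is to treat the two parts separately, since (i) is about two commuting effects and (ii) is the special case where one of them is a projection, but (ii) does not follow formally from (i) — it asserts in addition that $pf$ is the \emph{infimum} of $p$ and $f$ in $E$, which requires an extra argument. For part (i), suppose $eCf$. Since $eCf$ we have $ef=fe\in A$, and because the quadratic mapping $b\mapsto eb e$ is order preserving with $0\leq f\leq 1$, we get $0\leq efe\leq e$; but $efe=e(fe)=e(ef)=(ef)e$ and, using commutativity again, $efe=e^{2}f\cdot(\text{adjust})$ — more cleanly, from $eCf$ one has $ef=fe$, hence $ef=\tfrac12(ef+fe)\in A$, and $ef=e^{1/2}(e^{1/2}fe^{1/2})e^{-?}$ is awkward, so instead I would simply note $ef = efe + e f(1-e)$ is not obviously helpful; the cleanest route is: $e,f\geq 0$ commuting implies $ef=(e^{1/2})(f)(e^{1/2})\cdot$ — no. The genuinely clean argument: since $eCf$, also $e^{1/2}Cf$ (as $e^{1/2}\in CC(e)$), so $ef = e^{1/2}(e^{1/2}f)=e^{1/2}fe^{1/2}\geq 0$ because the quadratic map $b\mapsto e^{1/2}be^{1/2}$ is order preserving and $f\geq0$. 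Similarly $ef=e^{1/2}fe^{1/2}\leq e^{1/2}\cdot 1\cdot e^{1/2}=e$, and symmetrically (interchanging the roles of $e$ and $f$, using $f^{1/2}Ce$) $ef=fe=f^{1/2}ef^{1/2}\leq f$. Finally $0\leq ef$ together with $ef\leq e\leq 1$ gives $ef\in E$.

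For part (ii), assume $pCf$. Apply part (i) with $e:=p$: since $p\in P\subseteq E$, we immediately get $pf=fp\in E$ with $pf\leq p$ and $pf\leq f$; moreover $pfp = p(fp)=p(pf)=p^{2}f=pf$, so $pf=fp=pfp$ as claimed. It remains to prove that $pf$ is the infimum of $p$ and $f$ in $E$. So let $g\in E$ with $g\leq p$ and $g\leq f$; we must show $g\leq pf$. From $g\leq p$ and Theorem \ref{th:effleqproj} we have $g=pgp$ (equivalently $g=pg=gp$). The quadratic map $b\mapsto pbp$ is order preserving, so applying it to $g\leq f$ yields $pgp\leq pfp$, i.e. $g=pgp\leq pfp=pf$. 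Hence $g\leq pf$, which establishes that $pf$ is the infimum of $p$ and $f$ in $E$.

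I expect the main (and really only) subtlety to be the justification in part (i) that $ef\geq 0$ and the two domination bounds $ef\leq e$, $ef\leq f$ from commutativity alone: the key technical input is that $e^{1/2}\in CC(e)$ (so $e^{1/2}Cf$ whenever $eCf$) together with order preservation of the quadratic mapping $b\mapsto e^{1/2}be^{1/2}$, both of which are recorded in Section \ref{sc:Basic}. Everything in part (ii) beyond invoking part (i) is a short computation plus one application of Theorem \ref{th:effleqproj} and order preservation of $b\mapsto pbp$; the symmetry $pf=fp$ makes $pfp=pf$ immediate, so no genuine obstacle remains there.
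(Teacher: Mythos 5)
Your proposal is correct and takes essentially the same approach as the paper: part (ii) is the paper's argument verbatim ($g=pgp\leq pfp=p^{2}f=pf$), and in part (i) you simply inline, via $ef=e^{1/2}fe^{1/2}$ with $e^{1/2}\in CC(e)$ and order preservation of the quadratic map, the fact that the paper imports by citation (that commuting positive elements have a positive product, which the paper then also applies to $e$ and $1-f$ to get $ef\leq e$). The only needed revision is editorial: delete the several abandoned false starts in part (i) and keep just the final square-root argument, which is sound.
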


\begin{proof}
(i) Assume that $e,f\in E$ and $ef=fe$. By \cite[Lemma 1.5]{FSynap},
$0\leq ef$. Likewise, $0\leq e,1-f$ and $eC(1-f)$, so $0\leq e
(1-f)=e-ef$, whence $ef\leq e\leq 1$, so $ef\in E$. By symmetry,
$ef\leq f$.

(ii) Suppose that $pCf$ and let $g\in E$ with $g\leq p,f$. By (i),
$pf\leq p,f$. Also, by Theorem \ref{th:effleqproj}, $g=pgp$, and as
$g\leq f$, we have $g=pgp\leq pfp=p\sp{2}f=pf$, whence $pf=p\wedge f$.
\end{proof}

In part (i) of Lemma \ref{lm:eCf}, we note that although $ef=fe\in E$,
it is not necessarily the infimum of $e$ and $f$ in $E$. In fact, P.J.
Lahti and M.J. M\c{a}czynski \cite[page 1675]{LM} give an example of an
effect operator $e$ on a two-dimensional Hilbert space such that the
infimum of the commuting effects $e$ and $e\sp{\perp}=1-e$ does not exist
in $E$.

\begin{lemma} \label{lm:effectconds}
Suppose that $e\in A$ with $0\leq e$. Then{\rm: (i)} $e\in E\Rightarrow
0\leq e\sp{2}\leq e\leq 1\Rightarrow e\sp{2}\in E$. {\rm(ii)} $e\sp{2}
\leq 1\Leftrightarrow e\in E$. {\rm(iii)} $e\in E\Rightarrow e-e\sp{2}=
ee\sp{\perp}\in E$.
\end{lemma}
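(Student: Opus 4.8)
All three assertions follow quickly from the fact, already used above, that the product of two commuting positive elements of $A$ is positive; I would dispose of (i) and (iii) together and then turn to (ii).

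\emph{Parts (i) and (iii).} Assume $e\in E$, so $0\le e\le 1$ and hence $e^{\perp}=1-e\in E$; moreover $eCe^{\perp}$, since $e$ commutes with $1$ and with itself. Lemma \ref{lm:eCf}(i) then yields $ee^{\perp}\in E$ with $ee^{\perp}\le e$ (and $ee^{\perp}\le e^{\perp}$). As $ee^{\perp}=e-e^{2}$, this is precisely (iii), and it also gives $0\le e-e^{2}$, i.e. $e^{2}\le e$. Since $0\le e^{2}$ (a square is positive) and $e\le 1$, we obtain $0\le e^{2}\le e\le 1$, so $e^{2}\in E$, which is (i).

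\emph{Part (ii).} The implication $e\in E\Rightarrow e^{2}\le 1$ is immediate from (i) (indeed from $e^{2}\in E$). For the converse, suppose $e^{2}\le 1$, i.e. $0\le 1-e^{2}$. The key observation is the elementary identity
\[
1-e=\tfrac{1}{2}\bigl[(1-e)^{2}+(1-e^{2})\bigr],
\]
which one verifies by expanding $(1-e)^{2}=1-2e+e^{2}$. The right-hand side is a sum of two positive elements, namely the square $(1-e)^{2}$ and the hypothesis $1-e^{2}$; hence $1-e\ge 0$, i.e. $e\le 1$. Together with the standing assumption $0\le e$, this gives $e\in E$.

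\emph{Anticipated obstacle.} The only point that is not routine bookkeeping is the converse in (ii): deducing $e\le 1$ from $e^{2}\le 1$. A reflex here would be to pass to the commutative bicommutant $CC(e)$, or to invert $1+e$, or to run a spectral argument; the displayed identity sidesteps all of this and keeps the whole proof short and elementary. Everything else is a direct application of Lemma \ref{lm:eCf}(i) and the definition of $E$.
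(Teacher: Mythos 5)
Your proof is correct and follows essentially the same route as the paper's: part (i) and (iii) via Lemma \ref{lm:eCf}(i) applied to the commuting effects, and the converse of (ii) via the identity $(1-e)^{2}+(1-e^{2})=2(1-e)$, which is exactly the computation in the paper. The only cosmetic difference is that you instantiate Lemma \ref{lm:eCf}(i) with the pair $e,e^{\perp}$ rather than $e,e$, which changes nothing of substance.
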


\begin{proof}
(i) If $e\in E$, then $e\sp{2}\leq e$ by Lemma \ref{lm:eCf} (i).

(ii) Suppose that $e\sp{2}\leq 1$. Then $0\leq(1-e)\sp{2}+(1-e\sp{2})
=2(1-e)$, so $e\leq 1$, whence $e\in E$. Conversely, if $e\in E$, then
by (i), $e\sp{2}\leq e\leq 1$.

(iii) If $e\in E$, then $0\leq e-e\sp{2}=e(1-e)=ee\sp{\perp}$ by (i)
and $e-e\sp{2}\leq e\leq 1$, so $e-e\sp{2}\in E$.
\end{proof}

Each element $a\in A$ determines and is determined by a one-parameter
family of projections $(p\sb{a,\lambda})\sb{\lambda\in\reals}$ called
its \emph{spectral resolution} and defined by $p\sb{a,\lambda}:=1-
((a-\lambda1)\sp{+})\dg$ for all $\lambda\in\reals$ \cite[Definition
8.2]{FSynap}. See \cite[\S 8]{FSynap}, especially \cite[Theorem 8.4]
{FSynap} for the basic properties of the spectral resolution. We note
that by \cite[Theorem 8.10]{FSynap}, if $a,b\in A$, then $bCa$ iff
$bCp\sb{a,\lambda}$ for all $\lambda\in\reals$.

By \cite[Theorem 8.4 (vii)]{FSynap}, the spectral resolution $(p\sb{a,
\lambda})\sb{\lambda\in\reals}$ is uniquely determined by the corresponding
\emph{rational spectral resolution} $(p\sb{a,\mu})\sb{\mu\in\rationals}$
according to the formula
\[
p\sb{a,\lambda}=\bigwedge\{p\sb{a,\mu}:\lambda\leq\mu\in\rationals\}
 \text{\ \ for each\ }\lambda\in\reals.
\]

\begin{remark} \label{rk:RatSpecCommute}
If $a\in A$ and $q\in P$, then since commutativity of projections is
preserved under the formation of arbitrary existing infima, the formula
above implies that $qCa$ iff $qCp\sb{a,\mu}$ for all $\mu\in\rationals$.
\end{remark}

Let $q\in P$. Then with the partial order and operations inherited from
$A$, the subset
\[
qAq:=\{qaq:a\in A\}=\{a\in A:a=qaq\}=\{a\in A:a=qa=aq\}\subseteq A
\]
is a synaptic algebra in its own right with unity element $q$ and
with $qRq$  as its enveloping algebra \cite[Theorem 4.10]{FSynap}. The
OML of projections in $qAq$ is $P[0,q]:=\{v\in P:v\leq q\}$ with the
orthocomplementation $v\mapsto v\sp{\perp\sb{q}}:=v\sp{\perp}\wedge q$.
Likewise, the set of all effects in $qAq$ is $E[0,q]:=\{f\in E:f
\leq q\}$ with the orthosupplementation $f\mapsto f\sp{\perp\sb{q}}:=
q-f=(1-f)q=f\sp{\perp}q=qf\sp{\perp}=f\sp{\perp}\wedge q$ (Lemma \ref
{lm:eCf} (ii)). Let $a\in qAq$. Then $|a|$, $a\sp{+}$, $a\dg$, and
if $0\leq a$, $a\sp{1/2}$, belong to $qAq$ and coincide with the
absolute value, the positive part, the carrier, and the square root
of $a$, respectively, as calculated in $qAq$.

\begin{lemma} \label{lm:SRofqaq}
Let $a\in A$, $f\in E$, and $q\in P$. Then{\rm: (i)} If $qCa$, then the
spectral resolution of $qa=aq\in qAq$ as calculated in $qAq$ is given by
$(qp\sb{a,\lambda})\sb{\lambda\in\reals}=(p\sb{a,\lambda}\wedge q)\sb
{\lambda\in\reals}$. {\rm(ii)} If $qCf$, then the spectral resolution
of $qf=fq=f\wedge q\in qAq$, as calculated in $qAq$, is given by $(qp\sb{f,
\lambda})\sb{\lambda\in\reals}=(p\sb{f,\lambda}\wedge q)\sb{\lambda\in
\reals}$.
\end{lemma}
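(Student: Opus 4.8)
The plan is to compute, directly from the definition $p\sb{b,\lambda}=q-((b-\lambda q)\sp{+})\dg$, the spectral resolution of the element $b:=qa=aq=qaq\in qAq$ inside the synaptic algebra $qAq$, whose unity element is $q$. Here the positive part and the carrier are those of $qAq$, but, as recorded in the text preceding the lemma, for any element of $qAq$ these coincide with the positive part and carrier computed in $A$; this is what will let me transfer the calculation into $A$. Part (ii) will then follow immediately from part (i) applied with $a:=f\in A$, together with Lemma \ref{lm:eCf}(ii), which gives $qf=fq=f\wedge q$ and $qp\sb{f,\lambda}=p\sb{f,\lambda}\wedge q$.

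The one genuinely new point, which I expect to be the main obstacle, is the identity $(qc)\sp{+}=qc\sp{+}$ whenever $c\in A$ with $qCc$. To prove it, note that $q$ commutes with $|c|$ (since $|c|\in CC(c)$), so $q|c|=q|c|q\geq 0$ because the quadratic map $x\mapsto qxq$ is order preserving, and $(q|c|)\sp{2}=q\sp{2}|c|\sp{2}=qc\sp{2}=(qc)\sp{2}$ using $|c|\sp{2}=c\sp{2}$ and $q\sp{2}=q$. By uniqueness of the square root, $|qc|=q|c|$, whence $(qc)\sp{+}=\frac12(|qc|+qc)=q\cdot\frac12(|c|+c)=qc\sp{+}$.

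Now apply this with $c:=a-\lambda 1$, which commutes with $q$ since $qCa$, and observe that $b-\lambda q=q(a-\lambda 1)$, so $(b-\lambda q)\sp{+}=q(a-\lambda 1)\sp{+}$. This element lies in $qAq$, hence its carrier computed in $qAq$ equals its carrier in $A$, and by Lemma \ref{lm:carrierofprod} (applicable because $q$, $(a-\lambda 1)\sp{+}$, and their product all belong to $A$) this carrier equals $q\dg\wedge((a-\lambda 1)\sp{+})\dg=q\wedge((a-\lambda 1)\sp{+})\dg$. Since $((a-\lambda 1)\sp{+})\dg=1-p\sb{a,\lambda}=p\sb{a,\lambda}\sp{\perp}$ by the definition of the spectral resolution, the $\lambda$-th spectral projection of $b$ in $qAq$ is $q-(q\wedge p\sb{a,\lambda}\sp{\perp})$.

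It remains to identify $q-(q\wedge p\sb{a,\lambda}\sp{\perp})$ with $q\wedge p\sb{a,\lambda}$, and then with $qp\sb{a,\lambda}$. Since $qCa$ forces $qCp\sb{a,\lambda}$ (by \cite[Theorem 8.10]{FSynap}, or Remark \ref{rk:RatSpecCommute}), Lemma \ref{lm:eCf}(ii) gives $q\wedge p\sb{a,\lambda}=qp\sb{a,\lambda}$, so only the first identity is at issue. Working in the OML $P[0,q]$, where $v\sp{\perp\sb{q}}=v\sp{\perp}\wedge q=q-v$ for $v\leq q$, we have $q-(q\wedge p\sb{a,\lambda}\sp{\perp})=(q\wedge p\sb{a,\lambda}\sp{\perp})\sp{\perp\sb{q}}=(q\sp{\perp}\vee p\sb{a,\lambda})\wedge q$ by De Morgan in the OML; and since $q\sp{\perp}$ commutes with both $q$ and $p\sb{a,\lambda}$, Theorem \ref{th:distributive} yields $(q\sp{\perp}\vee p\sb{a,\lambda})\wedge q=(q\sp{\perp}\wedge q)\vee(p\sb{a,\lambda}\wedge q)=p\sb{a,\lambda}\wedge q$, completing the proof of (i). As noted, (ii) is the special case $a=f$. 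The only delicate step is the identity $(qc)\sp{+}=qc\sp{+}$; everything else is routine manipulation with the cited facts about carriers, the already-established coincidence of positive parts and carriers in $qAq$ and in $A$, and OML distributivity.
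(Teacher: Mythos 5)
Your proof is correct and takes essentially the same approach as the paper, whose entire argument for (i) is the statement that it follows ``by a direct calculation using the definition of the spectral resolution, the transfer of positive parts and carriers between $qAq$ and $A$, and the fact that $qCa$ implies $qCp\sb{a,\lambda}$''; you have simply carried out that calculation in full, with the identity $(qc)\sp{+}=qc\sp{+}$ supplying the computational core the paper leaves implicit. Part (ii) is deduced from (i) via Lemma \ref{lm:eCf}(ii) in both treatments.
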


\begin{proof}
Part (i) is proved by a direct calculation using \cite[Definition 8.2
and Theorem 4.10]{FSynap} and the fact that $qCa$ implies $qCp\sb{a,
\lambda}$, whence $p\sb{a,\lambda}\wedge q=qp\sb{a,\lambda}$ for all
$\lambda\in\reals$. Part (ii) follows from (i) and Lemma \ref{lm:eCf}
(ii).
\end{proof}

\begin{lemma} \label{lm:pAp,patom}
Suppose that $p$ is an atom in $P$. Then{\rm: (i)} $pAp=\{\lambda p:\lambda
\in\reals\}$. {\rm(ii)} If $a\in A$, there exists a unique $\lambda\in\reals$
such that $pap=\lambda p$. {\rm(iii)} If $f\in E$ and $pfp=\lambda p$, then
$0\leq\lambda\leq 1$.
\end{lemma}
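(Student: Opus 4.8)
The plan is to prove (i) first and then read off (ii) and (iii). For (i), the key point is that, $p$ being an atom, the only projections dominated by $p$ are $0$ and $p$, so the OML of projections of the synaptic algebra $pAp$ (which has unity $p$, and in which carriers and positive parts agree with those computed in $A$) is $P[0,p]=\{0,p\}$. I would fix $a\in pAp$ and look at its spectral resolution $(p_{a,\lambda})_{\lambda\in\reals}$ computed in $pAp$, so that $p_{a,\lambda}=p-((a-\lambda p)\sp{+})\dg$. Each $p_{a,\lambda}$ is a projection of $pAp$, hence equals $0$ or $p$; the family is nondecreasing and right continuous (basic properties of the spectral resolution, \cite[Theorem 8.4]{FSynap}); for all large $\lambda$ we have $a\le\lambda p$, so $(a-\lambda p)\sp{+}=0$ and $p_{a,\lambda}=p$; and for all sufficiently negative $\lambda$ the element $a-\lambda p$ dominates a positive scalar multiple of $p$, forcing $(a-\lambda p)\dg=p$ and hence $p_{a,\lambda}=0$. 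Therefore $\{\lambda\in\reals:p_{a,\lambda}=p\}$ is a nonempty, proper, upward-closed subset of $\reals$, and right continuity forces it to be $[\lambda_0,\infty)$ for $\lambda_0:=\inf\{\lambda:p_{a,\lambda}=p\}$; that is, $p_{a,\lambda}=0$ for $\lambda<\lambda_0$ and $p_{a,\lambda}=p$ for $\lambda\ge\lambda_0$.

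Next I would check that $\lambda_0p\in pAp$ has exactly this spectral resolution: from $p_{\lambda_0p,\lambda}=p-(((\lambda_0-\lambda)p)\sp{+})\dg$, for $\lambda<\lambda_0$ the element $(\lambda_0-\lambda)p$ is a positive scalar multiple of $p$, hence equals its own positive part and has carrier $p\dg=p$, giving $p_{\lambda_0p,\lambda}=0$; and for $\lambda\ge\lambda_0$ the element $(\lambda_0-\lambda)p$ is $\le0$, so its positive part is $0$ and $p_{\lambda_0p,\lambda}=p$. Since an element of a synaptic algebra is determined by its spectral resolution, it follows that $a=\lambda_0p$. Thus $pAp\subseteq\{\lambda p:\lambda\in\reals\}$, and the reverse inclusion is obvious, proving (i).

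For (ii), given $a\in A$ we have $pap\in pAp$, so (i) furnishes $\lambda\in\reals$ with $pap=\lambda p$; if also $pap=\mu p$, then $(\lambda-\mu)p=0$, and since $p\ne0$ (an atom) this forces $\lambda=\mu$. For (iii), if $f\in E$ then $0\le f\le1$, and applying the order-preserving quadratic map $x\mapsto pxp$ yields $0=p0p\le pfp\le p1p=p$; writing $pfp=\lambda p$ as in (ii) we obtain $0\le\lambda p\le p$. If $\lambda<0$, multiplying $0\le\lambda p$ by $\lambda^{-1}<0$ gives $p\le0$, whence $p=0$, a contradiction; if $\lambda>1$, multiplying $0\le(1-\lambda)p$ by $(1-\lambda)^{-1}<0$ again gives $p\le0$, a contradiction. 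Hence $0\le\lambda\le1$.

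The step requiring the most care will be the single-jump structure of the spectral resolution in $pAp$ — above all, pinning down its value at the jump point $\lambda_0$ — which rests on the monotonicity and right continuity of the spectral resolution recorded in \cite[Theorem 8.4]{FSynap} rather than on anything in the present excerpt; everything else is a routine verification together with the fact, stated in the excerpt, that an element of a synaptic algebra is determined by its spectral resolution.
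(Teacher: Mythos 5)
Your proposal is correct and takes essentially the same route as the paper, whose proof of (i) is just the one-line observation that $0$ and $p$ are the only projections in $pAp$ ``from which, using spectral theory in $pAp$, (i) follows''; you have simply supplied the details of that spectral argument (every spectral projection of $a$ in $pAp$ is $0$ or $p$, so the resolution has a single jump at some $\lambda\sb{0}$ and coincides with that of $\lambda\sb{0}p$), and your treatments of (ii) and (iii) likewise match the paper's use of $p\neq0$ and of $0\leq pfp\leq p$.
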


\begin{proof}
(i) Since $p$ is an atom, it follows that $0$ and $p\not=0$ are the only projections
in the synaptic algebra $pAp$, from which, using spectral theory in $pAp$, (i)
follows. Part (ii) follows from the fact that $p\not=0$, and (iii) is a
consequence of $0\leq f\leq 1\Rightarrow 0\leq pfp\leq p1p=p\sp{2}=p\leq 1$.
\end{proof}

An element $u\in A$ is said to be a \emph{symmetry} \cite{SymSA} iff
$u\sp{2}=1$, and a \emph{partial symmetry} is an element $t\in A$
such that $t\sp{2}\in P$. As a consequence of the uniqueness theorem
for square roots, a projection is the same thing as a partial symmetry
$p$ such that $0\leq p$. If $t\in A$ is a partial symmetry, then
$u:=t+(t\sp{2})\sp{\perp}$ is a symmetry called the \emph{canonical
extension} of $t$.

If $a\in A$ there is a uniquely determined partial symmetry $t\in A$,
called the \emph{signum} of $a$, such that $t\sp{2}=a\dg$ and $a=|a|t$.
Moreover, $t\in CC(a)$, $t\dg=a\dg$, and if $u=t+(t\sp{2})\sp{\perp}$ is
the canonical extension of $t$ to a symmetry, then $u\in CC(a)$ and
$a=|a|u=u|a|$. The latter formula is called the \emph{polar
decomposition} of $a$. It turns out that the symmetry $u$ in the polar
decomposition of $a$ is uniquely determined.

If $a,b\in A$ and $u\in A$ is a symmetry, it is not difficult to verify
that $a\leq b\Leftrightarrow uau\leq ubu$ and that $ua\dg u=(uau)\dg$.

Two projections $p,q\in P$ are \emph{exchanged by a symmetry} $u\in A$
iff $upu=q$ (whence, automatically, $uqu=p$) and they are \emph{exchanged
by a partial symmetry} $t\in A$ iff $tpt=q$ and $tqt=p$. If $p$ and $q$
are exchanged by a partial symmetry $t$, then they are exchanged by the
canonical extension $u:=t+(t\sp{2})\sp{\perp}$ of $t$ to a symmetry.

If $p\in P$ and $a\in A$, then by direct calculation using the
fact that $p\sp{\perp}=1-p$, one obtains the well-known \emph{Peirce
decomposition} of $a$ with respect to $p$, namely
\[
a=pap+pap\sp{\perp}+p\sp{\perp}ap+p\sp{\perp}ap\sp{\perp}.
\]
We refer to $pap+p\sp{\perp}ap\sp{\perp}$ as the \emph{diagonal part}
of $a$ with respect to $p$ and to $pap\sp{\perp}+p\sp{\perp}ap$ as
the \emph{off-diagonal part} of $a$ with respect to $p$. We note
that $pap$, $p\sp{\perp}ap\sp{\perp}$, and the diagonal part $pap+
p\sp{\perp}ap\sp{\perp}$ of $a$ belong to $A$. Also, although
$pap\sp{\perp}$ and $p\sp{\perp}ap$ belong to the enveloping algebra
$R$, but not necessarily to $A$, the off-diagonal part $pap\sp
{\perp}+p\sp{\perp}ap$ belongs to $A$.

\begin{lemma}[{\cite[Theorem 2.12]{FJP2proj}}] \label{lm:diagzero}
If $0\leq a\in A$ and $p\in P$, then $a=0$ iff the diagonal part of
$a$ with respect to $p$ is zero.
\end{lemma}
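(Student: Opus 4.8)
The plan is to prove the two implications separately, the forward one being trivial: if $a=0$, then its diagonal part $pap+p\sp{\perp}ap\sp{\perp}$ with respect to $p$ is obviously $0$. For the converse, I would assume that the diagonal part of $a$ with respect to $p$ is zero, i.e. $pap+p\sp{\perp}ap\sp{\perp}=0$, and aim to deduce $a=0$.

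The device I would use is the symmetry $u:=2p-1=p-p\sp{\perp}$; indeed $u\sp{2}=(2p-1)\sp{2}=4p\sp{2}-4p+1=1$, so $u$ is a symmetry in the sense of the excerpt. A direct computation using $p\sp{\perp}=1-p$ gives, for every $b\in A$,
\[
ubu=pbp-pbp\sp{\perp}-p\sp{\perp}bp+p\sp{\perp}bp\sp{\perp},
\]
so that $\tfrac{1}{2}(b+ubu)=pbp+p\sp{\perp}bp\sp{\perp}$ is precisely the diagonal part of $b$ with respect to $p$. Applying this with $b=a$, the hypothesis that the diagonal part of $a$ vanishes becomes $a+uau=0$, i.e. $uau=-a$.

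To finish, I would invoke the elementary facts about symmetries recorded in the excerpt: since $u$ is a symmetry and conjugation by $u$ satisfies $x\leq y\Leftrightarrow uxu\leq uyu$, the inequality $0\leq a$ yields $0=u0u\leq uau=-a$. Thus $a\leq 0$ and $a\geq 0$, and because $\leq$ is a partial order, $a=0$, as required. I do not anticipate a genuine obstacle: the only thing to get right is the Peirce-decomposition bookkeeping identifying $\tfrac{1}{2}(b+ubu)$ with the diagonal part, after which the positivity step is immediate. (Alternatively one could avoid the symmetry: the order-preserving quadratic maps $b\mapsto pbp$ and $b\mapsto p\sp{\perp}bp\sp{\perp}$ carry $0\leq a$ to $0\leq pap$ and $0\leq p\sp{\perp}ap\sp{\perp}$, whose sum is $0$, forcing $pap=0=p\sp{\perp}ap\sp{\perp}$; then, since $0\leq a$, one passes to $a\sp{1/2}pa\sp{1/2}\in A$, which is $\geq 0$ and satisfies $(a\sp{1/2}pa\sp{1/2})\sp{2}=a\sp{1/2}(pap)a\sp{1/2}=0$, so by uniqueness of positive square roots $a\sp{1/2}pa\sp{1/2}=0$, likewise $a\sp{1/2}p\sp{\perp}a\sp{1/2}=0$, and hence $a=a\sp{1/2}(p+p\sp{\perp})a\sp{1/2}=0$.)
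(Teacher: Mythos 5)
Your proof is correct. Note that the paper itself gives no argument for this lemma---it is simply imported by citation from \cite[Theorem 2.12]{FJP2proj}---so there is no in-text proof to compare against; what you have supplied is a self-contained verification from the facts recorded in Section 2. Your main route, via the symmetry $u:=2p-1$ and the identity $\tfrac{1}{2}(a+uau)=pap+p\sp{\perp}ap\sp{\perp}$, is the standard device for this kind of statement and uses only ingredients the paper explicitly provides: $u\sp{2}=1$, $uau\in A$, and the equivalence $x\leq y\Leftrightarrow uxu\leq uyu$, from which $0\leq uau=-a$ forces $a=0$. Your parenthetical alternative is also sound and arguably more elementary: positivity and linearity of the quadratic maps give $pap=p\sp{\perp}ap\sp{\perp}=0$, and then the passage to $a\sp{1/2}pa\sp{1/2}\geq 0$ with $(a\sp{1/2}pa\sp{1/2})\sp{2}=a\sp{1/2}(pap)a\sp{1/2}=0$, hence $a\sp{1/2}pa\sp{1/2}=0$ by uniqueness of positive square roots, and $a=a\sp{1/2}(p+p\sp{\perp})a\sp{1/2}=0$, avoids symmetries entirely; it is the kind of argument that generalizes to diagonal parts with respect to any finite orthogonal decomposition of the identity. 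Either version is a legitimate replacement for the external citation.
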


\begin{lemma}  \label{lm:offdiagzero}
Let $a\in A$ and $p\in P$. Then the following conditions are mutually
equivalent{\rm: (i)} $pCa$. {\rm(ii)} The off-diagonal part of $a$
with respect to $p$ is zero. {\rm(iii)} $pa\in A$. {\rm(iv)} $ap\in A$.
{\rm(v)} $pap\sp{\perp}=0$. {\rm(vi)} $p\sp{\perp}ap=0$.
\end{lemma}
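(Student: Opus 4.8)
The plan is to route all six conditions through a short network of implications built from three ingredients: the Peirce decomposition $a=pap+pap\sp{\perp}+p\sp{\perp}ap+p\sp{\perp}ap\sp{\perp}$; the facts recorded above that $pap$, $p\sp{\perp}ap\sp{\perp}$, and the off-diagonal part $pap\sp{\perp}+p\sp{\perp}ap$ all belong to $A$; and the observation that any $w\in A$ with $w\sp{2}=0$ must vanish, since then $w\dg=(w\sp{2})\dg=0\dg=0$, whence $w=ww\dg=0$ (equivalently, $|w|=(w\sp{2})\sp{1/2}=0$, so $w=|w|t=0$ for the signum $t$ of $w$). I would first prove that the three ``off-diagonal'' conditions (ii), (v), (vi) are mutually equivalent, then tie that block to the commutation condition (i), and finally reduce (iii) and (iv) to the block by the same square-zero device.

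For the block (ii)\,$\Leftrightarrow$\,(v)\,$\Leftrightarrow$\,(vi): multiplying the off-diagonal part of $a$ on the left by $p$ and on the right by $p\sp{\perp}$ isolates $pap\sp{\perp}$, and multiplying on the left by $p\sp{\perp}$ and on the right by $p$ isolates $p\sp{\perp}ap$, so (ii) forces both (v) and (vi). Conversely, if (v) holds, then the off-diagonal part equals $w:=p\sp{\perp}ap\in A$, and $w\sp{2}=p\sp{\perp}a(pp\sp{\perp})ap=0$, so $w=0$; this yields (vi), hence (ii). The mirror-image computation shows (vi)\,$\Rightarrow$\,(v). I expect this to be the one step that requires an idea rather than bookkeeping, because here one must pass from a one-sided vanishing statement to its ``transpose'' without an involution on $R$ at hand; the substitute is precisely the identity $pap\sp{\perp}\cdot pap\sp{\perp}=0$ combined with ``square-zero in $A$ forces zero.''

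To connect the block to (i): if $aCp$, then $pa=ap$, so $pap=(ap)p=ap=pa$ and hence $pap\sp{\perp}=pa-pap=0$, which is (v). Conversely, (v) entails (vi), so the Peirce decomposition collapses to $a=pap+p\sp{\perp}ap\sp{\perp}$, whence $pa=pap=ap\in A$ and $aCp$. Finally, $pa=pap+pap\sp{\perp}$ with $pap\in A$ shows $pa\in A$ iff $pap\sp{\perp}\in A$; but $(pap\sp{\perp})\sp{2}=pa(p\sp{\perp}p)ap\sp{\perp}=0$, so membership of $pap\sp{\perp}$ in $A$ forces $pap\sp{\perp}=0$, giving (iii)\,$\Leftrightarrow$\,(v); the identical argument with $ap=pap+p\sp{\perp}ap$ gives (iv)\,$\Leftrightarrow$\,(vi). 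Assembling (i)\,$\Leftrightarrow$\,(v)\,$\Leftrightarrow$\,(vi)\,$\Leftrightarrow$\,(ii) together with (iii)\,$\Leftrightarrow$\,(v) and (iv)\,$\Leftrightarrow$\,(vi) completes the proof; the only real obstacle is the symmetrization in the previous paragraph, and all remaining steps are routine manipulation of the Peirce blocks.
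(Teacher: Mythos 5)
Your proposal is correct. Every application of your square--zero device is legitimate: in each instance the element $w$ with $w\sp{2}=0$ has first been shown to lie in $A$ (e.g.\ $pap\sp{\perp}=pa-pap$ when $pa\in A$, or $p\sp{\perp}ap$ equal to the off-diagonal part when $pap\sp{\perp}=0$), so the carrier identities $(w\sp{2})\dg=w\dg$ and $w=ww\dg$ apply and force $w=0$. The route differs noticeably from the paper's. The paper obtains (i)\,$\Leftrightarrow$\,(ii) by citing the earlier two-projections paper, gets (iii)\,$\Leftrightarrow$\,(iv) from $pa+ap\in A$, and handles the crucial ``transpose'' step via the carrier's annihilator symmetry: from $(1-p)(pa)=0$ it concludes $(pa)(1-p)=0$, hence $pa=pap$, and symmetrically $ap=pap$. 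You instead make the whole lemma self-contained, replacing both the external citation and the annihilator flip by the single observation that a square-zero element of $A$ vanishes, and you use it uniformly to kill $pap\sp{\perp}$ and $p\sp{\perp}ap$ whenever one of them is forced into $A$. Both mechanisms are consequences of the same carrier axioms, so neither argument is more general, but yours buys self-containedness and a single reusable lemma, while the paper's is slightly shorter at each step because the flip $ab=0\Leftrightarrow ba=0$ is available off the shelf. One cosmetic remark: in your chain for (v)\,$\Rightarrow$\,(vi) you compute $w\sp{2}=p\sp{\perp}a(pp\sp{\perp})ap$; the inner product should read $p\sp{\perp}ap\cdot p\sp{\perp}ap=p\sp{\perp}a(pp\sp{\perp})ap=0$, which is what you intend, so this is only a matter of display, not of substance.
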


\begin{proof}
The equivalence (i) $\Leftrightarrow$ (ii) follows from {\cite
[Theorem 2.12]{FJP2proj}}. If $pa\in A$, then since $pa+ap\in A$,
we have $ap=(pa+ap)-pa\in A$; similarly, $ap\in A\Rightarrow
pa\in A$, and we have (iii) $\Leftrightarrow$ (iv).  To prove that
(i) $\Leftrightarrow$ (iii), note that $pCa\Rightarrow pa=ap\in A$.
Conversely, suppose that $pa\in A$. Then, since (iii) $\Leftrightarrow$
(iv), $ap\in A$. Also, $(1-p)pa=0$, so $pa(1-p)=0$, and we have $pa=pap$.
Similarly, $ap(1-p)=0$, so $(1-p)ap=0$, i.e., $ap=pap$, whence $pa=pap=ap$.
This proves that (i) $\Leftrightarrow$ (iii), and it follows that
conditions (i)--(iv) are mutually equivalent.

If (i) holds, then $pap\sp{\perp}=app\sp{\perp}=0$, so (i) $\Rightarrow$
(v). Conversely, if (v) holds, then $0=pap\sp{\perp}=pa(1-p)=pa-pap$,
so $pa=pap\in A$, and we have (v) $\Rightarrow$ (iii). Similarly,
(i) $\Rightarrow$ (vi) $\Rightarrow$ (iv).
\end{proof}

\section{A projection and an effect} \label{sc:p&e}

\begin{assumption}
For the remainder of this article we assume that $p\in P$, and
$e\in E$.
\end{assumption}
In this section we associate with the pair $p,e$ four special
effects, $c$, $s$, $j$, and $b$ (Definitions \ref{df:cs}, \ref{df:j},
and \ref{df:b}) and a symmetry $k$ (Definition \ref{df:symmetryk}). Using
$c$, $s$, $j$, $b$, and $k$, we rewrite the Peirce decomposition of $e$
with respect to $p$, thus obtaining the \emph{CBS-decomposition of
$e$ with respect to $p$} (Theorem \ref{th:CBSdecomp}).

In the next definition we generalize to the present case the definitions
of the cosine and sine effects for a projection $q$ with respect to
the projection $p$ \cite[Definition 4.2]{FJP2proj}.

\begin{definition} \label{df:cs}
Since $0\leq e, e\sp{\perp}$, we have $0\leq pep+p\sp{\perp}e\sp{\perp}
p\sp{\perp}$ and $0\leq pe\sp{\perp}p+p\sp{\perp}ep\sp{\perp}$. Thus, we
define the \emph{cosine} effect $c$ and the \emph{sine} effect $s$ for
$e$ with respect to the  projection $p$ as follows:
\[
\text{(1)\ } c:=(pep+p\sp{\perp}e\sp{\perp}p\sp{\perp})\sp{1/2}.
 \text{\ \ \ (2)\ } s:=(pe\sp{\perp}p+p\sp{\perp}ep\sp{\perp})\sp{1/2}.
\]
\end{definition}

\begin{lemma} \label{lm:ecsProps}
{\rm(i)} $c\sp{2}=1-p+pe+ep-e$. {\rm(ii)} $s\sp{2}=p-pe-ep+e$. {\rm(iii)}
$c\sp{2}+s\sp{2}=1$. {\rm(iv)} $c\sp{2}p=pc\sp{2}=pep$ and $s\sp{2}p
\sp{\perp}=p\sp{\perp}s\sp{2}= p\sp{\perp}ep\sp{\perp}$. {\rm(v)} $c,s
\in C(p)$ and $cCs$. {\rm(vi)} $c,s,cs,c\sp{2},s\sp{2},c\sp{2}s\sp{2}
\in E$, $c\sp{2}\leq c$, and $s\sp{2}\leq s$.
\end{lemma}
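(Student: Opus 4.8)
The plan is to obtain (i) and (ii) by a direct Peirce expansion, read off (iii) and (iv) as algebraic consequences, and then derive the commutativity claims in (v) and the effect-membership claims in (vi) from the square-root/bicommutant calculus together with Lemmas \ref{lm:eCf} and \ref{lm:effectconds}. For (i), substitute $p\sp{\perp}=1-p$ and $e\sp{\perp}=1-e$ into $c\sp{2}=pep+p\sp{\perp}e\sp{\perp}p\sp{\perp}$ and multiply out, using $p\sp{2}=p$; the two copies of $pep$ cancel and what remains is $c\sp{2}=1-p+pe+ep-e$. Part (ii) is the same computation applied to $s\sp{2}=pe\sp{\perp}p+p\sp{\perp}ep\sp{\perp}$, via $pe\sp{\perp}p=p-pep$ and $p\sp{\perp}ep\sp{\perp}=e-ep-pe+pep$. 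Adding the two formulas gives (iii); more cleanly, rewriting $pe\sp{\perp}p=p-pep$ and $p\sp{\perp}ep\sp{\perp}=p\sp{\perp}-p\sp{\perp}e\sp{\perp}p\sp{\perp}$ shows $s\sp{2}=1-(pep+p\sp{\perp}e\sp{\perp}p\sp{\perp})=1-c\sp{2}$, which I will reuse below. For (iv), multiply the formula from (i) by $p$ on each side: a direct simplification using $pp\sp{\perp}=0$ and $p\sp{2}=p$ gives $pc\sp{2}=c\sp{2}p=pep$; multiplying the defining expression for $s\sp{2}$ by $p\sp{\perp}$ on each side, using $pp\sp{\perp}=0$ and $(p\sp{\perp})\sp{2}=p\sp{\perp}$, gives $p\sp{\perp}s\sp{2}=s\sp{2}p\sp{\perp}=p\sp{\perp}ep\sp{\perp}$.

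For (v): by (iv) we have $pc\sp{2}=c\sp{2}p$, i.e.\ $pCc\sp{2}$; since $0\leq c\sp{2}$ and $C(c\sp{2})=C((c\sp{2})\sp{1/2})=C(c)$, it follows that $pCc$, that is, $c\in C(p)$. Likewise the identity $s\sp{2}p\sp{\perp}=p\sp{\perp}s\sp{2}$ from (iv) rearranges to $s\sp{2}p=ps\sp{2}$, so $pCs\sp{2}$ and hence $s\in C(p)$ by the same argument. For $cCs$, note $c\sp{2}Cs\sp{2}$ since $s\sp{2}=1-c\sp{2}$; then $c\in CC(c\sp{2})$ together with $s\sp{2}\in C(c\sp{2})$ gives $cCs\sp{2}$, and then $s\in CC(s\sp{2})$ together with $c\in C(s\sp{2})$ gives $cCs$.

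For (vi): $c\sp{2}=pep+p\sp{\perp}e\sp{\perp}p\sp{\perp}$ is a sum of the images of $e\geq 0$ and $e\sp{\perp}\geq 0$ under order-preserving quadratic maps, hence $0\leq c\sp{2}$, while $c\sp{2}=1-s\sp{2}\leq 1$ because $0\leq s\sp{2}$; thus $c\sp{2}\in E$, and by symmetry $s\sp{2}\in E$. Since $0\leq c=(c\sp{2})\sp{1/2}$ and $c\sp{2}\leq 1$, Lemma \ref{lm:effectconds}(ii) gives $c\in E$, and then Lemma \ref{lm:effectconds}(i) gives $c\sp{2}\leq c$; symmetrically $s\in E$ and $s\sp{2}\leq s$. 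Finally, $cCs$ from (v) together with Lemma \ref{lm:eCf}(i) yields $cs\in E$, and $c\sp{2}Cs\sp{2}$ together with Lemma \ref{lm:eCf}(i) yields $c\sp{2}s\sp{2}\in E$.

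I do not anticipate a genuine obstacle here: the lemma is a chain of Peirce computations plus invocations of the standard square-root facts. The one point that must be handled with care is the passage, in (v), from commutativity of the squares $c\sp{2}$ and $s\sp{2}$ to commutativity of $c$ and $s$ themselves; this has to be routed through $a\sp{1/2}\in CC(a)$ and $C(a)=C(a\sp{1/2})$ for $0\leq a$, rather than through any spurious ``commuting elements have commuting square roots'' shortcut.
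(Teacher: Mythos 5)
Your proposal is correct and follows essentially the same route as the paper: Peirce expansion for (i)--(iv), then $C(a)=C(a^2)$ for $0\le a$ to get (v), and Lemmas \ref{lm:effectconds} and \ref{lm:eCf} for (vi). You even spell out the $cCs$ step that the paper leaves implicit, and your caution about not inferring commutativity of square roots naively is exactly the right point.
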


\begin{proof}
Parts (i) and (ii) follow from straightforward calculations using the facts that
$p\sp{\perp}=1-p$ and $e\sp{\perp}=1-e$. Obviously, (iii) follows from (i) and
(ii).

By (i) we have $c\sp{2}p=p-p+pep+ep-ep=pep$ and $pc\sp{2}=p-p+pe+pep-pe=pep$.
Using (ii), a similar calculation yields $s\sp{2}p\sp{\perp}=p\sp{\perp}s\sp{2}=
p\sp{\perp}ep\sp{\perp}$, and we have (iv).

As $0\leq c,s$, it follows that $C(c)=C(c\sp{2})$ and $C(s)=C(s\sp{2})$. By (iv),
$p\in C(c\sp{2})$ and $p\in C(s\sp{2})$, whence $pCc$ and $pCs$, and (v) is proved.

We have $0\leq c,s$ and since $c\sp{2}, s\sp{2}\leq c\sp{2}+s\sp{2}=1$, we have
$c\sp{2}, s\sp{2}\leq 1$, whence by Lemma \ref{lm:effectconds}, $c\sp{2}\leq c
\in E$, and $s\sp{2}\leq s\in E$. Thus, since $c,s\in E$ and $cCs$, Lemma
\ref{lm:eCf} (i) implies that $cs\in E$, and (vi) is proved.
\end{proof}

As $e\in E$, we have $e\sp{2}\in E$ with $e-e\sp{2}=ee\sp{\perp}\in E$
(Lemma \ref{lm:effectconds} (iii)), whence $p(e-e\sp{2})p+p\sp{\perp}
(e-e\sp{2})p\sp{\perp}\geq 0$.

\begin{definition} \label{df:j}
We define $j\in A$ by
\[
j:=(p(e-e\sp{2})p+p\sp{\perp}(e-e\sp{2})p\sp{\perp})\sp{1/2},
\]
i.e., $0\leq j$ and $j\sp{2}$ is the diagonal part of $e-e\sp{2}=ee\sp
{\perp}$ with respect to $p$.
\end{definition}

In the next lemma we obtain an important relation between $c\sp{2}
s\sp{2}$, the diagonal part $j\sp{2}$ of $e-e\sp{2}$ with respect to $p$,
and the square of the off-diagonal part $pep\sp{\perp}+p\sp{\perp}ep$
of $e$ with respect to $p$.

\begin{lemma} \label{lm:squareofcs}
$c\sp{2}s\sp{2}=(cs)\sp{2}=j\sp{2}+(pep\sp{\perp}+p\sp{\perp}ep)\sp{2}$.
\end{lemma}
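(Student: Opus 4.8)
The plan is to compute $(cs)^2$ directly from the closed forms for $c^2$ and $s^2$ supplied by Lemma \ref{lm:ecsProps}, and to separately compute $j^2$ and $(pep^{\perp}+p^{\perp}ep)^2$, and then to check that the two sides agree. First, since $cCs$ by Lemma \ref{lm:ecsProps}(v) we have $(cs)^2 = c^2s^2$, so only this product must be analyzed. Using $c^2 = 1-p+pe+ep-e$ and $s^2 = p-pe-ep+e = 1-c^2$, I would write $c^2s^2 = c^2(1-c^2) = c^2 - (c^2)^2$. Thus the left-hand side is $c^2-c^4$, which is a polynomial identity in $p$ and $e$ (everything here lives in the enveloping algebra $R$, and commutativity of $p$ with $c^2$, noted in Lemma \ref{lm:ecsProps}(iv),(v), can be used freely).

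Next I would expand the right-hand side. For $j^2$, by Definition \ref{df:j} it is the diagonal part of $e-e^2$ with respect to $p$, i.e. $j^2 = p(e-e^2)p + p^{\perp}(e-e^2)p^{\perp}$. For the off-diagonal term, I would expand $(pep^{\perp}+p^{\perp}ep)^2$ by multiplying out the four products; using $p^{\perp}p = pp^{\perp} = 0$, only the cross terms $pep^{\perp}ep + p^{\perp}epep^{\perp}$ survive, which is a diagonal element of $A$ (consistent with the earlier remark that the off-diagonal part squares into $A$). So the right-hand side becomes
\[
p(e-e^2)p + p^{\perp}(e-e^2)p^{\perp} + pep^{\perp}ep + p^{\perp}epep^{\perp}.
\]
Now I would combine the $p$-corner terms: $p e p - p e^2 p + p e p^{\perp} e p = pep - pe(p+p^{\perp})ep \cdot$ — more carefully, $pe^2p = pe(p+p^{\perp})ep = pepep + pep^{\perp}ep$, so $-pe^2p + pep^{\perp}ep = -pepep$, giving $pep - pepep = pep(p-pep) $ for the $p$-corner, and symmetrically $p^{\perp}ep^{\perp} - p^{\perp}ep^{\perp}ep^{\perp}$ for the $p^{\perp}$-corner. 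Hence the right-hand side equals $(pep-pepep)+(p^{\perp}ep^{\perp}-p^{\perp}ep^{\perp}ep^{\perp})$, which one recognizes as $g - g^2$ where $g := pep + p^{\perp}ep^{\perp} = c^2$ is the diagonal part of $e$ (using $pp^{\perp}=0$ to see $g^2 = pepep + p^{\perp}ep^{\perp}ep^{\perp}$). That is exactly $c^2 - c^4$, matching the left-hand side.

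The only real bookkeeping hazard is keeping the cross terms straight when expanding $e^2 = e(p+p^{\perp})e$ inside each Peirce corner and matching them against the surviving terms of $(pep^{\perp}+p^{\perp}ep)^2$; there is no conceptual obstacle, since $p$ and $p^{\perp}$ are complementary orthogonal idempotents and all manipulations take place in $R$. I would present the argument as: (1) reduce the left side to $c^2-c^4$ via $s^2 = 1-c^2$ and $cCs$; (2) reduce the right side to $g-g^2$ with $g=c^2$ by expanding and cancelling; (3) conclude equality. This also re-derives, as a by-product, that $c^2$ is precisely the diagonal part of $e$, which is worth stating inside the proof for clarity.
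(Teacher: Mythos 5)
Your overall strategy---organize both sides by Peirce corners and compare---is sound and in fact close to the paper's proof, which likewise just expands everything (using $c^2=1-p+pe+ep-e$ and $s^2=p-pe-ep+e$) and collects terms. Your reduction of the right-hand side to $(pep-pepep)+(p^{\perp}ep^{\perp}-p^{\perp}ep^{\perp}ep^{\perp})=g-g^2$ with $g:=pep+p^{\perp}ep^{\perp}$ is correct. The problem is the final matching step, which rests on the claim that $g=c^2$; that is false. By Definition \ref{df:cs}, $c^2=pep+p^{\perp}e^{\perp}p^{\perp}$, which differs from the diagonal part $g=pep+p^{\perp}ep^{\perp}$ of $e$ by $c^2-g=p^{\perp}(1-2e)p^{\perp}\neq 0$ in general (equivalently, $c^2=1-p+pe+ep-e$ while $g=e-pe-ep+2pep$). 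So ``that is exactly $c^2-c^4$'' is not justified as written, and the closing remark that your argument re-derives ``that $c^2$ is precisely the diagonal part of $e$'' is wrong: only the $pAp$-corner of $c^2$ is the corresponding corner of $e$; its $p^{\perp}Ap^{\perp}$-corner is that of $e^{\perp}$.

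The gap is easily repaired without changing your organization: do not pass through $c^2-c^4$ at all. Since the cross terms vanish ($pp^{\perp}=p^{\perp}p=0$),
\[
c^2s^2=(pep+p^{\perp}e^{\perp}p^{\perp})(pe^{\perp}p+p^{\perp}ep^{\perp})
=pepe^{\perp}p+p^{\perp}e^{\perp}p^{\perp}ep^{\perp},
\]
and writing $e^{\perp}=1-e$ inside each summand gives $pep-pepep$ and $p^{\perp}ep^{\perp}-p^{\perp}ep^{\perp}ep^{\perp}$ respectively, i.e.\ exactly the $g-g^2$ you obtained for the right-hand side. (It is true that $c^2-c^4=g-g^2$, since $c^2-c^4=c^2(1-c^2)=c^2s^2$ by $s^2=1-c^2$ and the display shows $c^2s^2=g-g^2$; but this equality requires that computation and does not follow from an identification of $g$ with $c^2$.) With that substitution your proof closes and is, if anything, a bit tidier than the paper's term-by-term expansion.
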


\begin{proof} \setcounter{equation}{0}
By parts (i) and (ii) of Lemma \ref{lm:ecsProps},
\[
c\sp{2}s\sp{2}=(1-p+pe+ep-e)(p-pe-ep+e)=(p-pe-ep+e)-(p-pe-ep+e)\sp{2}
\]
\[
=p-pe-ep+e-p+pe+pep-pe+pep-pepe-pe\sp{2}p+pe\sp{2}
\]
\[
+ep-epe-epep+epe-ep+epe+e\sp{2}p-e\sp{2}
\]
\begin{equation} \label{eq:squareofcs01}
=e-e\sp{2}+2p(e-e\sp{2})p-(e-e\sp{2})p-p(e-e\sp{2})+pe\sp{2}p+epe-epep-pepe.
\end{equation}
Also,
\begin{equation} \label{eq:squareofcs02}
(pep\sp{\perp}+p\sp{\perp}ep)\sp{2}=pep\sp{\perp}ep+p\sp{\perp}
 epep\sp{\perp}=pe\sp{2}p+epe-epep-pepe
\end{equation}
and
\[
j\sp{2}=p(e-e\sp{2})p+(1-p)(e-e\sp{2})(1-p)
\]
\begin{equation} \label{eq:squareofcs03}
=e-e\sp{2}+2p(e-e\sp{2})p-(e-e\sp{2})p-p(e-e\sp{2}).
\end{equation}
Combining Equations (\ref{eq:squareofcs01}), (\ref{eq:squareofcs02}),
and (\ref{eq:squareofcs03}), we obtain the desired result.
\end{proof}

\begin{definition} \label{df:b}
By Lemma \ref{lm:squareofcs}, $0\leq c\sp{2}s\sp{2}-j\sp{2}$, which
enables us to define
\[
b:=(c\sp{2}s\sp{2}-j\sp{2})\sp{1/2}.
\]
We refer to $b$ as the \emph{commutator effect} for the pair $p,e$
(see Lemma \ref{lm:pCe} below).
\end{definition}

\begin{theorem} \label{th:bProps}
{\rm(i)\ } $pCj$\text{\ and\ }$pCb$. {\ \rm(ii)\ }$b\in E$. {\ \rm(iii)\ }
 $b=|pep\sp{\perp}+p\sp{\perp}ep|$.
\end{theorem}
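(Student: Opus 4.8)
The plan is to prove the three assertions in the order (iii), then (i), then (ii), since the identification of $b$ with an absolute value (iii) immediately yields (i) via the bicommutant property of absolute values, and then $b\in E$ (ii) follows by bounding the element $pep\sp{\perp}+p\sp{\perp}ep$.

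For part (iii), I would start from Lemma~\ref{lm:squareofcs}, which gives $b\sp{2}=c\sp{2}s\sp{2}-j\sp{2}=(pep\sp{\perp}+p\sp{\perp}ep)\sp{2}$. Write $w:=pep\sp{\perp}+p\sp{\perp}ep$ for the off-diagonal part of $e$ with respect to $p$; the excerpt notes that $w\in A$ even though its two summands need not be. Since $e=e\sp{\ast}$ in the enveloping algebra (everything in $A$ is self-adjoint), $w$ is genuinely self-adjoint, so $w\sp{2}\geq 0$ and $|w|=(w\sp{2})\sp{1/2}$ by the definition of absolute value recalled in Section~\ref{sc:Basic}. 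Thus $b\sp{2}=|w|\sp{2}$, and since both $b$ and $|w|$ are the nonnegative square root of the same nonnegative element, uniqueness of square roots forces $b=|w|$. This is the crux and it is essentially immediate once Lemma~\ref{lm:squareofcs} is in hand.

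For part (i), recall from Section~\ref{sc:Basic} that $|a|\in CC(a)$ for every $a\in A$. So it suffices to show $pCw$, i.e.\ $p$ commutes with the off-diagonal part $w=pep\sp{\perp}+p\sp{\perp}ep$ of $e$. A direct computation does this: $pw=p(pep\sp{\perp}+p\sp{\perp}ep)=pep\sp{\perp}$ (using $p\sp{2}=p$ and $pp\sp{\perp}=0$), while $wp=(pep\sp{\perp}+p\sp{\perp}ep)p=p\sp{\perp}ep$; these are not obviously equal, so instead I would note that actually I want $p$ to commute with $w$, and the cleaner route is: $p w p = 0$ and $p\sp{\perp} w p\sp{\perp}=0$, so $w$ is purely off-diagonal and $pwp\sp{\perp}+p\sp{\perp}wp=w$. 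Hmm --- that shows $w$ is off-diagonal, not that $pCw$. Let me instead argue directly that $pCj$ and $pCb$ from their definitions. We have $j\sp{2}=p(e-e\sp{2})p+p\sp{\perp}(e-e\sp{2})p\sp{\perp}$ is the diagonal part of $e-e\sp{2}$ with respect to $p$, and any diagonal element commutes with $p$ (since $p(pxp+p\sp{\perp}xp\sp{\perp})=pxp=(pxp+p\sp{\perp}xp\sp{\perp})p$). Hence $pCj\sp{2}$, and since $0\leq j$ we get $C(j)=C(j\sp{2})\ni p$, i.e.\ $pCj$. Next, by Lemma~\ref{lm:ecsProps}(v), $pCc$ and $pCs$, so $pCc\sp{2}s\sp{2}$; combined with $pCj\sp{2}$ we get $pC(c\sp{2}s\sp{2}-j\sp{2})=b\sp{2}$, and again $0\leq b$ gives $C(b)=C(b\sp{2})\ni p$, so $pCb$. (This also reproves (i) independently of (iii).)

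For part (ii), I need $0\leq b\leq 1$; the lower bound is built into the definition of $b$ as a square root, so the content is $b\leq 1$, equivalently $b\sp{2}\leq 1$ by Lemma~\ref{lm:effectconds}(ii). By (iii), $b\sp{2}=|w|\sp{2}=w\sp{2}$, so I must show $w\sp{2}\leq 1$. From Lemma~\ref{lm:squareofcs}, $w\sp{2}=c\sp{2}s\sp{2}-j\sp{2}\leq c\sp{2}s\sp{2}$ since $j\sp{2}\geq 0$. By Lemma~\ref{lm:ecsProps}(vi), $c\sp{2}s\sp{2}\in E$, so $c\sp{2}s\sp{2}\leq 1$, and therefore $w\sp{2}\leq 1$, giving $b\sp{2}\leq 1$ and hence $b\in E$. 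The only place one must be slightly careful is that $c\sp{2}s\sp{2}=(cs)\sp{2}$ is indeed an effect: this is exactly the last clause of Lemma~\ref{lm:ecsProps}(vi), which in turn uses that $c,s\in E$ with $cCs$ so that $cs\in E$ by Lemma~\ref{lm:eCf}(i). I do not anticipate a serious obstacle anywhere; the main "idea" is simply to recognize $b\sp{2}$ as the square of the self-adjoint off-diagonal part $w$, after which everything is a short appeal to uniqueness of square roots, the bicommutant property of the absolute value, and the effect-algebra facts already assembled in Section~\ref{sc:Basic}.
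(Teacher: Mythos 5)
Your proof is correct and takes essentially the same route as the paper: part (iii) from Lemma \ref{lm:squareofcs} plus uniqueness of nonnegative square roots, part (i) by observing that $p$ commutes with the diagonal element $j\sp{2}$ and with $c\sp{2}s\sp{2}$ (hence with $b\sp{2}$, hence with $b$ since $C(b)=C(b\sp{2})$), and part (ii) from $b\sp{2}\leq c\sp{2}s\sp{2}\leq 1$ together with Lemma \ref{lm:effectconds}(ii). Your abandoned first attempt at (i) was indeed doomed---by Lemma \ref{lm:offdiagzero}, $pC(pep\sp{\perp}+p\sp{\perp}ep)$ would force that off-diagonal part to be zero---but you correctly discarded it, and the argument you settle on is the paper's.
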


\begin{proof}
(i) Since
\[
p(p(e-e\sp{2})p+p\sp{\perp}(e-e\sp{2})p\sp{\perp})=p(e-e\sp{2})p
=(p(e-e\sp{2})p+p\sp{\perp}(e-e\sp{2})p\sp{\perp})p,
\]
we have $pC(p(e-e\sp{2})p+p\sp{\perp}(e-e\sp{2})p\sp{\perp})$,
and since
\[
j=(p(e-e\sp{2})p+p\sp{\perp}(e-e\sp{2})p\sp{\perp})\sp{1/2}
\]
it follows that $pCj$. Also, by Lemma \ref{lm:ecsProps}
(v), $pC(c\sp{2}s\sp{2})$, and therefore\linebreak $pC(c\sp{2}s
\sp{2}-j\sp{2})$. As $b=(c\sp{2}s\sp{2}-j\sp{2})\sp{1/2}$, it
follows that $pCb$.

(ii) Evidently, $0\leq b$. Also by Lemma \ref{lm:ecsProps} (vi),
$b\sp{2}\leq c\sp{2}s\sp{2}\leq 1$, and it follows from Lemma
\ref{lm:effectconds} (ii) that $b\in E$.

Part (iii) follows immediately from Lemma \ref{lm:squareofcs} and
Definition \ref{df:b}.
\end{proof}

\begin{definition} \label{df:symmetryk}
As per Theorem \ref{th:bProps} (iii), we define the symmetry $k$
by polar decomposition of $pep\sp{\perp}+p\sp{\perp}ep$, so that
\[
pep\sp{\perp}+p\sp{\perp}ep=|pep\sp{\perp}+p\sp{\perp}ep|k=bk=kb
\]
where $k\in CC(pep\sp{\perp}+p\sp{\perp}ep)$.
\end{definition}

\begin{theorem}[CBS-decomposition] \label{th:CBSdecomp}
\[
e=c\sp{2}p+bk+s\sp{2}p\sp{\perp},
 \text{\ where}
\]
\begin{enumerate}
\item $pep=c\sp{2}p=pc\sp{2}$ and $p\sp{\perp}ep\sp{\perp}=s\sp{2}p
\sp{\perp}=p\sp{\perp}s\sp{2}$.
\item $b=|pep\sp{\perp}+p\sp{\perp}ep|=(c\sp{2}s\sp{2}-j\sp{2})
 \sp{1/2}\in E$.
\item $k$ is a symmetry and $pep\sp{\perp}+p\sp{\perp}ep=bk=kb$.
\item $cCp$, $sCp$, $cCs$, $bCp$, and $k\in CC(pep\sp{\perp}+
 p\sp{\perp}ep)$.
\item $pbk=bpk=bkp\sp{\perp}=pep\sp{\perp}$, whence $b(pk-kp\sp{\perp})
 =b\dg(pk-kp\sp{\perp})=0$.
\item $p\sp{\perp}bk=bp\sp{\perp}k=bkp=p\sp{\perp}ep$.
\end{enumerate}
\end{theorem}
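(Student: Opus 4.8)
The plan is to verify the displayed identity $e=c^{2}p+bk+s^{2}p^{\perp}$ first, and then to read off items (1)--(6) essentially as corollaries of the definitions and the lemmas already established. For the main identity, I would start from the Peirce decomposition $e=pep+pep^{\perp}+p^{\perp}ep+p^{\perp}ep^{\perp}$. The diagonal part $pep+p^{\perp}ep^{\perp}$ is handled by Lemma \ref{lm:ecsProps}(iv), which gives $pep=c^{2}p=pc^{2}$ and $p^{\perp}ep^{\perp}=s^{2}p^{\perp}=p^{\perp}s^{2}$; this is exactly item (1). The off-diagonal part $pep^{\perp}+p^{\perp}ep$ equals $bk$ by Definition \ref{df:symmetryk} (the polar decomposition of that off-diagonal element), which is item (3). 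Adding these three pieces gives $e=c^{2}p+bk+s^{2}p^{\perp}$.

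Next I would dispatch items (2) and (4): item (2) is just the combination of Theorem \ref{th:bProps}(ii),(iii) and Definition \ref{df:b}; item (4) collects $cCp$, $sCp$, $cCs$ from Lemma \ref{lm:ecsProps}(v), $bCp$ from Theorem \ref{th:bProps}(i), and $k\in CC(pep^{\perp}+p^{\perp}ep)$ from Definition \ref{df:symmetryk}. The substantive remaining work is items (5) and (6). For (5) the plan is: from the Peirce decomposition, multiply $e=pep+pep^{\perp}+p^{\perp}ep+p^{\perp}ep^{\perp}$ on the left by $p$ and on the right by $p^{\perp}$ to isolate $pep^{\perp}=pep^{\perp}$; then substitute $bk=pep^{\perp}+p^{\perp}ep$ and use that $p$ annihilates $p^{\perp}$ to get $p(bk)=p\,pep^{\perp}+p\,p^{\perp}ep=pep^{\perp}$, and similarly $(bk)p^{\perp}=pep^{\perp}$. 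Since $bCp$ (item (4)) and $bk=kb$, we also have $pbk=bpk$ and $bkp^{\perp}=kbp^{\perp}=kp^{\perp}b$, so all the stated forms $pbk=bpk=bkp^{\perp}$ agree and equal $pep^{\perp}$. Subtracting $pbk=bpk$ from $bkp^{\perp}=pbk$ — more precisely, from $pbk=bpk$ and $bkp^{\perp}=pbk$ we get $bpk=bkp^{\perp}$, hence $b(pk-kp^{\perp})=0$; then $b\dg(pk-kp^{\perp})=0$ follows because $b\dg$ is the carrier of $b$ and $bx=0\Leftrightarrow b\dg x=0$. Item (6) is the mirror image: multiply on the left by $p^{\perp}$ and right by $p$, or equivalently apply the same reasoning to $p^{\perp}ep$, giving $p^{\perp}bk=bp^{\perp}k=bkp=p^{\perp}ep$.

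The one point that needs a little care — and the place I'd expect to be the main obstacle — is the bookkeeping with products that live in $R$ rather than $A$. Individually $pep^{\perp}$, $p^{\perp}ep$, $pk$, $kp^{\perp}$ need not belong to $A$, only their appropriate combinations do; so I want to phrase the manipulations so that at each stage I'm working inside $R$ (where associativity and the relations $p\cdot p^{\perp}=0$, $p^{2}=p$ are available) and only invoke membership in $A$, carriers, or the commutation facts $bCp$, $bk=kb$, $k\in CC(\,\cdot\,)$ when the expression is legitimately in $A$. In particular, to justify $pbk=bpk$ I use that $b$ commutes with $p$ as elements of $A$ (so $bp=pb\in A$), and then multiply by $k\in R$ on the right inside $R$; to justify $bkp^{\perp}=kbp^{\perp}$ I again use $bp^{\perp}=p^{\perp}b$ and multiply by $k$ on the left in $R$. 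The identity $bk=kb$ from Definition \ref{df:symmetryk} is the key ingredient that lets me move $k$ past $b$ freely. Once these commutations are in place, items (5) and (6) reduce to the Peirce projections computed above, and the proof is complete.
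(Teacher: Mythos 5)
Your proposal is correct and follows essentially the same route as the paper: the displayed formula and items (1)--(4) are read off from the Peirce decomposition together with Lemma \ref{lm:ecsProps}, Theorem \ref{th:bProps}, and Definitions \ref{df:b} and \ref{df:symmetryk}, and items (5)--(6) are obtained by multiplying $bk=pep\sp{\perp}+p\sp{\perp}ep$ by $p$ and $p\sp{\perp}$ and using $bCp$ and $bk=kb$, exactly as in the paper. Your extra care about which products live in $A$ versus $R$ (e.g.\ noting that $pk-kp\sp{\perp}=(pk+kp)-k\in A$ so the carrier property applies) is a sound refinement of the same argument, not a different one.
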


\begin{proof}
Parts (i), (ii), (iii), and the formula $e=c\sp{2}p+bk+s\sp{2}
p\sp{\perp}$ follow from Lemma \ref{lm:ecsProps} (iv), Lemma
\ref{th:bProps} (iii), Definition \ref{df:symmetryk}, and the Pierce
decomposition of $e$ with respect to $p$. Part (iv) is a consequence
of Lemma  \ref{lm:ecsProps} (v), Lemma \ref{th:bProps} (i), and
Definition \ref{df:symmetryk}.

By (iii) and the fact that $bCp$, we have $bpk=pbk=p(pep\sp{\perp}+
p\sp{\perp}ep)=pep\sp{\perp}=(pep\sp{\perp}+p\sp{\perp}ep)p\sp{\perp}
=bkp\sp{\perp}$, whence $b(pk-kp\sp{\perp})=pep\sp{\perp}-pep\sp{\perp}
=0$, proving (v). Part (vi) follows immediately from (v).
\end{proof}

As a consequence of the next lemma, in case $e$ is a projection,
then the CBS-decomposition theorem reduces to the generalized
CS-decomposition theorem (\cite[Theorem 5.6]{FJP2proj}).

\begin{lemma} \label{lm:einP}
The following conditions are mutually equivalent{\rm: (i)}
$e$ is a projection. {\rm(ii)} $j=0$. {\rm(iii)} $b=cs$.
\end{lemma}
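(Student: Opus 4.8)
The plan is to prove the chain of equivalences (i) $\Rightarrow$ (ii) $\Rightarrow$ (iii) $\Rightarrow$ (i), exploiting the definitions of $j$ and $b$ together with the characterization of projections among effects as the \emph{sharp} effects, i.e.\ $e\in E$ is a projection iff $e=e\sp{2}$ (equivalently $e\wedge e\sp{\perp}=0$), as recalled in Section \ref{sc:Basic}.

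For (i) $\Rightarrow$ (ii): if $e$ is a projection, then $e=e\sp{2}$, so $e-e\sp{2}=0$; hence its diagonal part with respect to $p$ vanishes, and since by Definition \ref{df:j} we have $j\geq 0$ with $j\sp{2}$ equal to that diagonal part, the uniqueness of square roots gives $j=0$. For (ii) $\Rightarrow$ (iii): when $j=0$, Definition \ref{df:b} reads $b=(c\sp{2}s\sp{2})\sp{1/2}$. By Lemma \ref{lm:ecsProps}(vi), $c,s\in E$, $cCs$, and $cs\in E$; since $cs\geq 0$ and $(cs)\sp{2}=c\sp{2}s\sp{2}$, uniqueness of square roots yields $b=cs$. (Alternatively, this is immediate from Lemma \ref{lm:squareofcs}, which gives $b\sp{2}=(cs)\sp{2}$ directly when $j=0$.)

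The step I expect to carry the most content is (iii) $\Rightarrow$ (i). Assume $b=cs$. Then $b\sp{2}=c\sp{2}s\sp{2}$, so by Lemma \ref{lm:squareofcs} we get $j\sp{2}=0$, hence $j=0$, hence the diagonal part of $e-e\sp{2}$ with respect to $p$ is zero. Now $0\leq e-e\sp{2}$ by Lemma \ref{lm:effectconds}(iii), so Lemma \ref{lm:diagzero} applies and forces $e-e\sp{2}=0$, i.e.\ $e=e\sp{2}$. An effect equal to its own square is a projection (it is sharp), so $e\in P$. The one subtlety to handle carefully is the appeal to Lemma \ref{lm:diagzero}: it requires the element in question to be positive, which is exactly the content of Lemma \ref{lm:effectconds}(iii), so the hypotheses line up.

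I would write the argument in the cyclic order above, and in fact only the implications (i) $\Rightarrow$ (ii), (ii) $\Rightarrow$ (iii), and (iii) $\Rightarrow$ (ii) $\Rightarrow$ (i) are needed; since (ii) $\Leftrightarrow$ (iii) drops out of Lemma \ref{lm:squareofcs} almost for free, the real work is the single passage from $j=0$ to $e=e\sp{2}$ via Lemmas \ref{lm:effectconds}(iii) and \ref{lm:diagzero}. No genuine obstacle is anticipated; the proof is short once one notices that Lemma \ref{lm:diagzero} is precisely the tool that converts ``diagonal part zero'' into ``element zero'' for positive elements.
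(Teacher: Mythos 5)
Your proof is correct and follows essentially the same route as the paper: the equivalence (i) $\Leftrightarrow$ (ii) via Lemma \ref{lm:diagzero} applied to the positive element $e-e\sp{2}$, and (ii) $\Leftrightarrow$ (iii) read off from Definition \ref{df:b} together with uniqueness of positive square roots. The only difference is cosmetic (a cyclic chain of implications versus two biconditionals), so nothing further is needed.
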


\begin{proof}
By Lemma \ref{lm:diagzero} (i), $e-e\sp{2}=0$ iff $j=0$,
whence (i) $\Leftrightarrow$ (ii). That (ii) $\Leftrightarrow$
(iii) is an immediate consequence of Definition \ref{df:b}.
\end{proof}

\begin{lemma} \label{lm:pCe}
The following conditions are mutually equivalent{\rm: (i)}
$pCe$. {\rm(ii)} $b=0$. {\rm(iii)} $b\dg=0$. {\rm(iv)} $cs=j$.
{\rm(v)} $e=c\sp{2}p+s\sp{2}p\sp{\perp}$.
\end{lemma}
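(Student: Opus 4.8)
The plan is to establish the cycle of implications
(i) $\Rightarrow$ (ii) $\Rightarrow$ (iii) $\Rightarrow$ (i), together with the separate equivalences
(ii) $\Leftrightarrow$ (iv) and (ii) $\Leftrightarrow$ (v), each of which follows almost immediately from the constructions already in place. For (ii) $\Leftrightarrow$ (iv): by Definition \ref{df:b} we have $b\sp{2}=c\sp{2}s\sp{2}-j\sp{2}=(cs)\sp{2}-j\sp{2}$ (using Lemma \ref{lm:ecsProps}(vi) for $c\sp{2}s\sp{2}=(cs)\sp{2}$), so $b=0$ iff $(cs)\sp{2}=j\sp{2}$; since $0\leq cs$ and $0\leq j$, uniqueness of square roots gives $(cs)\sp{2}=j\sp{2}$ iff $cs=j$. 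The equivalence (ii) $\Leftrightarrow$ (iii) is just the general fact that $b=0\Leftrightarrow b\dg=0$ (from $b=bb\dg$, recorded in Section \ref{sc:Basic}). For the equivalence with (v): Theorem \ref{th:CBSdecomp} gives $e=c\sp{2}p+bk+s\sp{2}p\sp{\perp}$, so $e=c\sp{2}p+s\sp{2}p\sp{\perp}$ iff $bk=0$; since $k$ is a symmetry (hence invertible in $R$), $bk=0$ iff $b=0$, which is (ii).

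The core of the lemma is the equivalence of (i) with (ii), and here I would argue through the off-diagonal part. By Theorem \ref{th:bProps}(iii), $b=|pep\sp{\perp}+p\sp{\perp}ep|$, and the absolute value of an element vanishes iff the element does; hence $b=0$ iff $pep\sp{\perp}+p\sp{\perp}ep=0$, i.e. iff the off-diagonal part of $e$ with respect to $p$ is zero. But Lemma \ref{lm:offdiagzero} says precisely that the off-diagonal part of $e$ with respect to $p$ is zero iff $pCe$. This gives (i) $\Leftrightarrow$ (ii) in one stroke.

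Assembling the pieces: (i) $\Leftrightarrow$ (ii) by the off-diagonal argument just described, (ii) $\Leftrightarrow$ (iii) by $b=bb\dg$, (ii) $\Leftrightarrow$ (iv) by the square-root/uniqueness argument, and (ii) $\Leftrightarrow$ (v) via the CBS-decomposition and invertibility of $k$. I do not anticipate a genuine obstacle; the only point requiring a moment's care is making sure that ``$b=0\Leftrightarrow pep\sp{\perp}+p\sp{\perp}ep=0$'' is invoked correctly — this rests on $|x|=0\Leftrightarrow x=0$, which holds because $|x|\dg=x\dg$ and $x=xx\dg$, both recorded in Section \ref{sc:Basic} — and on noting that $pep\sp{\perp}+p\sp{\perp}ep$ genuinely lies in $A$ (as remarked in the discussion of the Peirce decomposition), so that Lemma \ref{lm:offdiagzero} applies to it. Everything else is bookkeeping against results already established.
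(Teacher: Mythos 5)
Your proposal is correct and follows essentially the same route as the paper: (i)$\Leftrightarrow$(ii) via Theorem \ref{th:bProps}(iii) and Lemma \ref{lm:offdiagzero}, (ii)$\Leftrightarrow$(iv) from Definition \ref{df:b}, and (v) via the CBS-decomposition. The only (harmless) variation is at the last step, where you deduce (v)$\Rightarrow$(ii) from the invertibility of the symmetry $k$, whereas the paper gets (v)$\Rightarrow$(i) directly from the fact that $p$ commutes with $c\sp{2}$ and $s\sp{2}$.
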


\begin{proof}
The equivalence (i) $\Leftrightarrow$ (ii) follows from Lemma
\ref{lm:offdiagzero} and Theorem \ref{th:bProps} (iii),
and the equivalence (ii) $\Leftrightarrow$ (iii) is obvious.
The equivalence (ii) $\Leftrightarrow$ (iv) is a consequence
of Definition \ref{df:b}, so (i)--(iv) are mutually equivalent.
That (ii) $\Rightarrow$ (v) follows from Theorem \ref{th:CBSdecomp},
and since $p$ commutes with both $c\sp{2}$ and $s\sp{2}$, it is
clear that (v) $\Rightarrow$ (i).
\end{proof}

\begin{definition} \label{df:componentofa}
If $a\in A$, $q\in P$, and $aCq$, then the \emph{component of $a$
in the synaptic algebra $qAq$} is denoted and defined by $a\sb{q}
:=aq=qa=qaq\in qAq$.
\end{definition}

If $a\in A$, $q\in P$, and $aCq$, it is easy to see that $a=a\sb{q}
+a\sb{q\sp{\perp}}$ is the unique decomposition of $a$ as a sum of
an element in $qAq$ and an element in $q\sp{\perp}Aq\sp{\perp}$. This
decomposition can be useful in deducing properties of $a$ from
properties of its components $a\sb{q}\in qAq$ and $a\sb{q\sp{\perp}}
\in q\sp{\perp}Aq\sp{\perp}$.

\begin{lemma}  \label{lm:components}
Let $f\in E$, $q\in P$, and suppose that $fCq$. Then{\rm: (i)}
The component $f\sb{q}=fq=qf=f\wedge q$ is an effect in $qAq$.
{\rm(ii)} The orthosupplement of $f\sb{q}$ in $E[0,q]$ is the
component of $f\sp{\perp}$ in $qAq$, i.e., $f\sb{q}\sp{\,
\perp\sb{q}}=qf\sp{\perp}=f\sp{\perp}q=f\sp{\perp}\wedge q=
(f\sp{\perp})\sb{q}$.
\end{lemma}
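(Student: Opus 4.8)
The plan is to verify both parts by direct computation, using only Lemma~\ref{lm:eCf}(ii), the fact that the quadratic mapping $b\mapsto qbq$ is linear and order preserving on $A$, and the explicit description of $E[0,q]$ together with its orthosupplementation that was recalled just before the statement.

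For part~(i): since $fCq$, Definition~\ref{df:componentofa} applies and gives $f\sb{q}=fq=qf=qfq\in qAq$, while Lemma~\ref{lm:eCf}(ii) (with $p$ replaced by $q$) identifies this common value with $f\wedge q$, the infimum of $f$ and $q$ in $E$. It remains to check that $f\sb{q}$ is an effect of the synaptic algebra $qAq$, i.e.\ that $0\leq f\sb{q}\leq q$. Applying the order preserving quadratic map $b\mapsto qbq$ to $0\leq f\leq 1$ yields $0\leq qfq\leq q1q=q\sp{2}=q$, that is, $0\leq f\sb{q}\leq q$, so $f\sb{q}\in E[0,q]$, which is precisely the set of effects of $qAq$.

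For part~(ii): from $fCq$ and $1Cq$ we obtain $f\sp{\perp}=1-f\in C(q)$, so Definition~\ref{df:componentofa} and Lemma~\ref{lm:eCf}(ii) give $(f\sp{\perp})\sb{q}=f\sp{\perp}q=qf\sp{\perp}=qf\sp{\perp}q=f\sp{\perp}\wedge q\in qAq$. By the description of the orthosupplementation of the effect algebra $qAq$, the orthosupplement of $f\sb{q}$ in $E[0,q]$ is $q-f\sb{q}$. Using $f\sb{q}=qfq$ together with $q\sp{2}=q$, one computes $q-f\sb{q}=q1q-qfq=q(1-f)q=qf\sp{\perp}q=(f\sp{\perp})\sb{q}$, and chaining this with the equalities for $(f\sp{\perp})\sb{q}$ above yields the full assertion $f\sb{q}\sp{\,\perp\sb{q}}=qf\sp{\perp}=f\sp{\perp}q=f\sp{\perp}\wedge q=(f\sp{\perp})\sb{q}$.

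Since each step is a single invocation of an already established fact, I do not anticipate any real obstacle; the only point deserving a moment's care is making sure that the identity $f\sb{q}=qfq$ (rather than merely $f\sb{q}=fq$) is the one used when rewriting $q-f\sb{q}$ as $q(1-f)q$, which is where the Peirce-type bookkeeping $q1q=q$ enters.
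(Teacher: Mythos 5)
Your proof is correct and follows essentially the same route as the paper's: part (i) is Lemma \ref{lm:eCf}(ii) applied to $q$ and $f$ (your extra verification that $0\leq qfq\leq q$ via the quadratic map is a harmless elaboration of why $f\wedge q\in E[0,q]$), and part (ii) is the same direct computation $q-f\sb{q}=q(1-f)q=f\sp{\perp}q$ that the paper carries out, just organized slightly differently. No gaps.
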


\begin{proof}
By Lemma \ref{lm:eCf} (ii), $qf=fq=f\wedge q\in E[0,q]$, proving
(i). Also, $f\sb{q}\sp{\,\perp\sb{q}}=(fq)\sp{\perp}q=(1-fq)q
=q-fq=(1-f)q=f\sp{\perp}q=f\sp{\perp}\wedge q=(f\sp{\perp})\sb{q}$,
proving (ii).
\end{proof}

\begin{theorem} \label{th:esubq}
For $p\in P$ and $e\in E$, suppose that $q\in P$ with $qCp$ and $qCe$.
Then{\rm: (i)} $q$ commutes with $c,s,b,$ and $k$. {\rm(ii)} The cosine,
sine, and commutator effects for $e\sb{q}$ with respect to $p\sb{q}=
pq=qp=p\wedge q$ as calculated in $qAq$ are $c\sb{q}=cq=qc=c\wedge q$, $s
\sb{q}=sq=qs=s\wedge q$, and $b\sb{q}=bq=qb=b\wedge q$, respectively.
{\rm(iii)} The CBS-decomposition of $e\sb{q}$ with respect to $p\sb{q}$
in $qAq$ is $e\sb{q}=c\sb{q}\sp{\,2}p\sb{q}+b\sb{q}k\sb{q}+s\sb{q}
\sp{\,2}p\sb{q}\sp{\,\perp\sb{q}}=q(c\sp{2}p+bk+s\sp{2}p\sp{\perp})
=(c\sp{2}p+bk+s\sp{2}p\sp{\perp})q$.
\end{theorem}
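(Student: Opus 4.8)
\emph{Strategy.} The argument rests on two standing facts. First, since $qCp$ and $qCe$, $q$ commutes with each of $p,p^{\perp},e,e^{\perp}$, hence (computing in $R$) with every product formed from these four elements. Second, for $0\le x\in A$ the square root $x^{1/2}$, and for arbitrary $x\in A$ the absolute value $|x|$ and the symmetry occurring in the polar decomposition of $x$, all lie in $CC(x)$ and, when $x\in qAq$, are the same whether computed in $A$ or in the synaptic algebra $qAq$ (whose unit is $q$).

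\emph{Part (i).} Since $q$ commutes with $pep$ and with $p^{\perp}e^{\perp}p^{\perp}$, it commutes with $c^{2}=pep+p^{\perp}e^{\perp}p^{\perp}$ (Lemma \ref{lm:ecsProps}(i)), hence with $c=(c^{2})^{1/2}\in CC(c^{2})$; likewise $qCs$. Similarly $q$ commutes with $p(e-e^{2})p+p^{\perp}(e-e^{2})p^{\perp}=j^{2}$, hence with $j$, and since $q$ also commutes with $c^{2}s^{2}$ it commutes with $b^{2}=c^{2}s^{2}-j^{2}$ (Definition \ref{df:b}) and therefore with $b$. Finally $q$ commutes with $pep^{\perp}+p^{\perp}ep$, and $k\in CC(pep^{\perp}+p^{\perp}ep)$ (Definition \ref{df:symmetryk}), so $qCk$.

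\emph{Part (ii).} Apply Definitions \ref{df:cs}, \ref{df:j}, \ref{df:b} inside $qAq$, where $p_{q}^{\perp_{q}}=q-pq=p^{\perp}q$ and $e_{q}^{\perp_{q}}=q-eq=e^{\perp}q$. Using $q^{2}=q$ and the commutativity relations of part (i), one computes $p_{q}e_{q}p_{q}+p_{q}^{\perp_{q}}e_{q}^{\perp_{q}}p_{q}^{\perp_{q}}=(pep+p^{\perp}e^{\perp}p^{\perp})q=c^{2}q=(cq)^{2}$; as $cq=qcq\in qAq$ is $\ge 0$ and square roots agree in $qAq$ and $A$, the cosine effect for $e_{q}$ with respect to $p_{q}$ is $c_{q}=cq$, and $cq=qc=c\wedge q$ by Lemma \ref{lm:eCf}(ii). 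The identical computation gives $s_{q}=sq=s\wedge q$; and since $e_{q}-e_{q}^{2}=(e-e^{2})q$, it also gives $j_{q}=jq$, whence $c_{q}^{2}s_{q}^{2}-j_{q}^{2}=(c^{2}s^{2}-j^{2})q=b^{2}q=(bq)^{2}$ and $b_{q}=bq=b\wedge q$.

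\emph{Part (iii).} Put $a:=pep^{\perp}+p^{\perp}ep$, so the off-diagonal part of $e_{q}$ with respect to $p_{q}$ is $aq$. Since $(bq)^{2}=b^{2}q=a^{2}q=(aq)^{2}$ and $bq\ge 0$, the absolute value of $aq$ in $qAq$ is $bq=b_{q}$; and from $a=bk$ (Definition \ref{df:symmetryk}) we get $aq=(bq)(kq)=(kq)(bq)$ with $(kq)^{2}=q$, so $kq$ is a symmetry of $qAq$ satisfying the polar-decomposition identity for $aq$. To identify it with the symmetry $k_{q}$ produced by the polar decomposition in $qAq$, it suffices to check that $kq$ lies in the bicommutant of $aq$ computed in $qAq$: if $x\in qAq$ commutes with $aq$, then $x=qxq$ and $aCq$ give $xa=x(aq)=(aq)x=ax$, so $x$ commutes with $a$, hence with $k\in CC(a)$ and with $q$, hence with $kq$; uniqueness of the polar-decomposition symmetry then yields $k_{q}=kq$. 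Now the CBS-decomposition theorem (Theorem \ref{th:CBSdecomp}) applied in $qAq$, together with parts (i) and (ii) and $k_{q}=kq$, gives $e_{q}=c_{q}^{2}p_{q}+b_{q}k_{q}+s_{q}^{2}p_{q}^{\perp_{q}}=(c^{2}p+bk+s^{2}p^{\perp})q$; by part (i) this equals $q(c^{2}p+bk+s^{2}p^{\perp})$, and by multiplying the CBS-decomposition of $e$ (Theorem \ref{th:CBSdecomp}) on the right by $q$ it equals $eq=e_{q}$, as required.

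\emph{Main obstacle.} The essential content is bookkeeping: one must ensure that every square, square root, absolute value, and polar-decomposition symmetry computed within $qAq$ is genuinely the $q$-cut of its counterpart in $A$, which is exactly what the commutativity relations of part (i) are for. The only step requiring an actual argument rather than a direct computation is the identification $k_{q}=kq$, i.e. the verification that $kq$ lies in the bicommutant of $aq$ in $qAq$, via the implication ``$x\in qAq$ and $xC(aq)$ imply $xCa$.''
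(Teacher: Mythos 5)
Your proposal is correct and follows essentially the same route as the paper: commutation of $q$ with $c,s,b,k$ via bicommutants of the relevant diagonal/off-diagonal parts, then a direct computation showing that the defining expressions for the cosine, sine, and commutator effects in $qAq$ are the $q$-cuts of those in $A$, with (iii) falling out by multiplying the CBS-decomposition of $e$ by $q$. The paper dispatches part (iii) with ``follows from (ii)''; your explicit verification that $kq$ lies in the bicommutant of the off-diagonal part within $qAq$ merely fills in a detail the authors left implicit.
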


\begin{proof}
(i) As $c=(pep+p\sp{\perp}e\sp{\perp}p\sp{\perp})\sp{1/2}\in CC(pep+
p\sp{\perp}e\sp{\perp}p\sp{\perp})$, we have $qCc$ and similarly
$qCs$. Likewise, $qCb$ follows from $b=|pep\sp{\perp}+p\sp{\perp}ep|$
(Theorem \ref{th:bProps} (iii)), and $qCk$ follows from $k\in CC(pep
\sp{\perp}+p\sp{\perp}ep)$.

(ii) Obviously, $p\sb{q}e\sb{q}p\sb{q}=qpep=pepq$. Also, as $pCq$, we
have $p\sb{q}\sp{\,\perp\sb{q}}=p\sp{\perp}\wedge q=p\sp{\perp}q=
qp\sp{\perp}$. Moreover, $e\sb{q}\sp{\,\perp\sb{q}}=qe\sp{\perp}=
e\sp{\perp}q=qe\sp{\perp}q$. Therefore the cosine effect for
$e\sb{q}$ with respect to $p\sb{q}$ in $qAq$ is
\[
(p\sb{q}e\sb{q}p\sb{q}+p\sb{q}\sp{\,\perp\sb{q}}e\sb{q}\sp{\,\perp
\sb{q}}p\sb{q}\sp{\,\perp\sb{q}})\sp{1/2}=(pepq+p\sp{\perp}e
\sp{\perp}p\sp{\perp}q)\sp{1/2}=cq=c\sb{q}.
\]
Similar computations take care of $s\sb{q}$ and $b\sb{q}$. Part
(iii) follows from (ii).
\end{proof}

\section{Carriers and projection-free effects}

The assumptions and notation of Section \ref{sc:p&e} remain in force.
In this section we derive some information about the carriers of the
effects $e$, $c$, $s$, $j$, and $b$. Also, we introduce two special
projections, $z$ and $t$, associated with the effect $e$ (Definition
\ref{df:z} below).

If $f\in E$, $q\in P$, and $q\leq f$, we say that $q$ is a
\emph{subprojection} of $f$; likewise, if $g\in E$ and $g\leq f$,
we say that $g$ is a \emph{subeffect} of $f$.

\begin{definition} \label{df:ProjFree}
If $f\in E$ and the only subprojection of $f$ is $0$, we say that
$f$ is \emph{projection free}.
\end{definition}

\noindent Obviously, every subeffect of a projection-free effect
is projection free.

\begin{lemma} \label{lm:largestsubpro}
{\rm(i)} If $f\in E$, then $((f\sp{\perp})\dg)\sp{\perp}$ is the
largest subprojection of $f$. {\rm(ii)} $f$ is projection free
iff $(f\sp{\perp})\dg=1$. {\rm(iii)} $f\sp{\perp}$ is projection
free iff $f\dg=1$. {\rm(iv)} $f-((f\sp{\perp})\dg)\sp{\perp}$
and $f\sp{\perp}-(f\dg)\sp{\perp}$ are projection-free effects.
\end{lemma}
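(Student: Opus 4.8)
The plan is to derive (i) straight from the defining property of the carrier, read (ii) and (iii) off as immediate corollaries, and then bootstrap (iv) out of (i). No computation is really involved; the one genuine idea is that the largest subprojection of $f$ is the orthocomplement of the carrier of $f\sp{\perp}$.

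For (i), I would set $r:=((f\sp{\perp})\dg)\sp{\perp}\in P$ and argue as follows. From $f\sp{\perp}\leq(f\sp{\perp})\dg$ together with the fact that orthocomplementation reverses the order of the OML $P$, we get $r\leq f\sp{\perp\perp}=f$, so $r$ is a subprojection of $f$. Conversely, if $q\in P$ with $q\leq f$, then $f\sp{\perp}\leq q\sp{\perp}\in P$; since $(f\sp{\perp})\dg$ is the \emph{smallest} projection dominating $f\sp{\perp}$ (recalled in Section~\ref{sc:Basic}), we get $(f\sp{\perp})\dg\leq q\sp{\perp}$, hence $q\leq r$. Thus $r$ is the largest subprojection of $f$, proving (i).

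Parts (ii) and (iii) then require essentially no work. By Definition~\ref{df:ProjFree}, $f$ is projection free iff its only subprojection is $0$, i.e.\ iff the largest subprojection $r$ of $f$ is $0$, i.e.\ iff $(f\sp{\perp})\dg=r\sp{\perp}=1$; that is (ii). Applying (ii) to $f\sp{\perp}$ in place of $f$ and using $f\sp{\perp\perp}=f$ gives (iii).

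For (iv), write $g:=f-r$ with $r$ as in (i). That $g\in E$ is immediate: $r\leq f$ gives $0\leq g$, and $0\leq r$ gives $g\leq f\leq 1$. For projection-freeness, take $q\in P$ with $q\leq g$ and show $q=0$. The key move is the cheap bound $g=(f-1)+r\sp{\perp}\leq r\sp{\perp}$, valid since $f\leq 1$, which yields $q\leq r\sp{\perp}$, hence $q\perp r$, hence $q+r=q\vee r\in P$; then $q\leq g=f-r$ forces $q\vee r=q+r\leq f$, so by (i) $q\vee r\leq r$, i.e.\ $q\leq r$. Therefore $q\leq r\wedge r\sp{\perp}=0$. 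The second assertion of (iv) follows by applying the first to $f\sp{\perp}$ and using $((f\sp{\perp})\sp{\perp})\dg=f\dg$. The only spot needing a little care is exactly this last step in (iv): to assemble the projection $q\vee r=q+r$ one must have $q\perp r$, so one has to extract $q\leq r\sp{\perp}$ from the inequality $g\leq r\sp{\perp}$ \emph{before} combining it with $q\leq f-r$.
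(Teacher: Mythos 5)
Your argument is correct. Parts (i)--(iii) follow the paper's route: the paper simply cites that $(f\sp{\perp})\dg$ is the smallest projection dominating $f\sp{\perp}$ and reads (i)--(iii) off from that, exactly as you do. (One cosmetic slip: in (i) you invoke order-reversal of the orthocomplementation of the OML $P$, but $f\sp{\perp}$ need not lie in $P$; the step is still valid because $x\mapsto 1-x$ is order-reversing on all of $A$.) Where you genuinely diverge is (iv). The paper sets $g:=f-((f\sp{\perp})\dg)\sp{\perp}$, computes $g\sp{\perp}=f\sp{\perp}+((f\sp{\perp})\dg)\sp{\perp}$, applies Lemma \ref{lm:carrierofsum} to get $(g\sp{\perp})\dg=(f\sp{\perp})\dg\vee((f\sp{\perp})\dg)\sp{\perp}=1$, and concludes by (ii). You instead take an arbitrary subprojection $q\leq g$ and squeeze it to $0$: from $g\leq r\sp{\perp}$ (where $r:=((f\sp{\perp})\dg)\sp{\perp}$) you get $q\perp r$, so $q+r=q\vee r$ is a subprojection of $f$, hence $q\vee r\leq r$ by the maximality in (i), giving $q\leq r\wedge r\sp{\perp}=0$. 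Your version buys independence from Lemma \ref{lm:carrierofsum} (the carrier-of-a-sum formula), using only the characterization in (i) and elementary OML facts; the paper's version is shorter once that lemma is available and has the side benefit of exhibiting $(g\sp{\perp})\dg=1$ explicitly, which is the form of projection-freeness used elsewhere via (ii). Both are sound.
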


\begin{proof}
Part (i) follows from the fact that $(f\sp{\perp})\dg$ is the
smallest projection that dominates $f\sp{\perp}$ \cite
[Theorem 2.10 (iv)]{FSynap}, and parts (ii) and (iii) are
immediate consequences of (i).

(iv) By (i), $((f\sp{\perp})\dg)\sp{\perp}$ is a subprojection of
$f$, so $g:=f-((f\sp{\perp})\dg)\sp{\perp}$ is an effect. We have
$g\sp{\perp}=1-f+((f\sp{\perp})\dg)\sp{\perp}=f\sp{\perp}+((f\sp
{\perp})\dg)\sp{\perp}$, whence by Lemma \ref{lm:carrierofsum},
$(g\sp{\perp})\dg=(f\sp{\perp})\dg\vee((f\sp{\perp})\dg)\sp{\perp}
=1$, so $g$ is projection free by (ii). Similarly, $f\sp{\perp}
-(f\dg)\sp{\perp}$ is a projection-free effect.
\end{proof}

\begin{definition} \label{df:z}
In what follows, $z:=((e\sp{\perp})\dg)\sp{\perp}$ is the largest
subprojection of $e$ and $t:=((e\sp{\perp\,\perp})\dg)\sp{\perp}=
(e\dg)\sp{\perp}$ is the largest subprojection of $e\sp{\perp}$.
\end{definition}

\noindent We note that $(e\sp{\perp})\dg=z\sp{\perp}$ and
$e\dg=t\sp{\perp}$. Evidently, $e\in P\Leftrightarrow e=z=t
\sp{\perp}$.

\begin{theorem} \label{th:largestsubproj}
\
\begin{enumerate}
\item $z,t\in P\cap CC(e)$, $z\leq e\leq e\dg$, $t\leq e\sp{\perp}\leq
 (e\sp{\perp})\dg$, and $e-z,\, e\sp{\perp}-t\in E$.
\item $e$ is projection free iff $z=0$ iff $(e\sp{\perp})\dg=1$ and
 $e\sp{\perp}$ is projection free iff $t=0$ iff $e\dg=1$.
\item $z\perp t$, i.e., $(e\dg)\sp{\perp}\leq(e\sp{\perp})\dg$.
\item $e-z$ and $e\sp{\perp}-t$ are projection-free effects.
\item $(e-z)\dg=e\dg-z=e\dg\wedge z\sp{\perp}=t\sp{\perp}\wedge z
 \sp{\perp}=(t\vee z)\sp{\perp}=(t+z)\sp{\perp}$.
\item $(e\sp{\perp}-t)\dg=(e-z)\dg=(t+z)\sp{\perp}$.
\end{enumerate}
\end{theorem}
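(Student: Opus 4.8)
The plan is to prove the six parts roughly in order, since each later part leans on the earlier ones, and to lean heavily on Lemma \ref{lm:largestsubpro}, Lemma \ref{lm:carrierofsum}, and the carrier facts collected in Section \ref{sc:Basic}. For part (i): the inclusions $z\le e\le e\dg$ and $t\le e\sp{\perp}\le(e\sp{\perp})\dg$ are immediate from Definition \ref{df:z} together with the fact that $e\le e\dg$ always holds for an effect; that $e-z$ and $e\sp{\perp}-t$ are effects is then automatic. The membership $z,t\in CC(e)$ is the one point here that needs a word: $(e\sp{\perp})\dg\in CC(e\sp{\perp})$, and since $CC(e\sp{\perp})=CC(e)$ (anything commuting with $e$ commutes with $e\sp{\perp}=1-e$ and conversely), we get $z\sp{\perp}=(e\sp{\perp})\dg\in CC(e)$, hence $z\in CC(e)$; similarly $t=(e\dg)\sp{\perp}\in CC(e)$ from $e\dg\in CC(e)$.

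Part (ii) is a direct restatement of Lemma \ref{lm:largestsubpro}(ii)--(iii) applied to $f=e$ and to $f=e\sp{\perp}$, using $z=((e\sp{\perp})\dg)\sp{\perp}$ and $t=(e\dg)\sp{\perp}$. For part (iii), I would argue $z\perp t$ by showing $z\le t\sp{\perp}=e\dg$; but $z\le e\le e\dg$ from part (i), so this is immediate, and $z\perp t$ is exactly the relation $(e\dg)\sp{\perp}\le(e\sp{\perp})\dg$ once one unwinds $t=(e\dg)\sp{\perp}$ and $z\sp{\perp}=(e\sp{\perp})\dg$. Part (iv) is just Lemma \ref{lm:largestsubpro}(iv) applied to $f=e$, since $z=((e\sp{\perp})\dg)\sp{\perp}$ and $t=(e\dg)\sp{\perp}=(e\sp{\perp\perp}\dg)\sp{\perp}$ are precisely the two projections subtracted there.

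The substantive computation is part (v). Here I would set $g:=e-z$ and compute its carrier directly. Following the proof of Lemma \ref{lm:largestsubpro}(iv), $g\sp{\perp}=1-e+z=e\sp{\perp}+z$, and since $z\le e\le e\dg$ forces $z\perp(e\sp{\perp})\dg$ (as $z\le e\dg=((e\sp{\perp})\dg$'s carrier relation$)$... more directly $z\sp{\perp}=(e\sp{\perp})\dg\ge e\sp{\perp}$ and $z\le e$ give $zCz\sp{\perp}$ trivially), Lemma \ref{lm:carrierofsum} gives $(g\sp{\perp})\dg=(e\sp{\perp})\dg\vee z\dg=(e\sp{\perp})\dg\vee z=z\sp{\perp}\vee z=1$; this reproves projection-freeness but I actually want $(e-z)\dg$, so instead I compute $(e-z)\dg$ by writing $e-z$ as a sum of positives whose carriers I understand. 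A cleaner route: $z\le e$ and $zCe$, so in $CC(e)$ (a commutative synaptic algebra containing $e$, $z$, $t$) everything reduces to ordinary spectral/functional calculus, where $e-z$ is the restriction of $e$ to the complement of the spectral projection at $1$; its carrier is $e\dg$ cut down by removing the part where $e=1$, i.e. $(e-z)\dg=e\dg\wedge z\sp{\perp}$. Then $e\dg\wedge z\sp{\perp}=t\sp{\perp}\wedge z\sp{\perp}$ since $e\dg=t\sp{\perp}$; and $t\sp{\perp}\wedge z\sp{\perp}=(t\vee z)\sp{\perp}$ by De Morgan in the OML $P$; and $t\vee z=t+z$ because $z\perp t$ by part (iii). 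That $e\dg\wedge z\sp{\perp}=e\dg-z$ follows because $z\le e\dg$ and $z\in P$, so $e\dg-z\in P$ and equals $e\dg\wedge z\sp{\perp}$ by Lemma \ref{lm:eCf}(ii) applied to the commuting pair $e\dg$, $z\sp{\perp}$. Part (vi) then follows by running the same computation with $e$ replaced by $e\sp{\perp}$ (which swaps the roles of $z$ and $t$, and of $e\dg$ and $(e\sp{\perp})\dg$): $(e\sp{\perp}-t)\dg=(e\sp{\perp})\dg\wedge t\sp{\perp}=z\sp{\perp}\wedge t\sp{\perp}=(z+t)\sp{\perp}=(t+z)\sp{\perp}=(e-z)\dg$.

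I expect the main obstacle to be part (v), specifically justifying $(e-z)\dg=e\dg\wedge z\sp{\perp}$ rigorously within the synaptic-algebra axioms rather than by informal functional calculus. The safe way is: since $zCe$, Lemma \ref{lm:SRofqaq} lets me work inside the commutative synaptic algebra $CC(e)$, or alternatively I can argue purely by carrier manipulations --- on one hand $(e-z)\dg\le e\dg$ since $0\le e-z\le e$ and carriers are order-preserving (and $(e-z)\dg\le z\sp{\perp}$ since $z(e-z)=ze-z=0$, using $z\le e$ so $ze=z$, hence by the defining property of the carrier $z(e-z)\dg=0$, i.e. $(e-z)\dg\le z\sp{\perp}$); on the other hand, $e=(e-z)+z$ with $z\le z\vee(e-z)\dg$ obviously and $e-z\le((e-z)\dg)\cdot 1$, so $e\dg=((e-z)+z)\dg=(e-z)\dg\vee z$ by Lemma \ref{lm:carrierofsum}, giving $e\dg\wedge z\sp{\perp}\le((e-z)\dg\vee z)\wedge z\sp{\perp}=(e-z)\dg\wedge z\sp{\perp}\vee(z\wedge z\sp{\perp})=(e-z)\dg\wedge z\sp{\perp}=(e-z)\dg$, where the distributive step is legitimate by Theorem \ref{th:distributive} since $(e-z)\dg$, $z$, $z\sp{\perp}$ pairwise commute ($z\in CC(e)$ commutes with $(e-z)\in CC(e)$ hence with its carrier). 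Combining the two inequalities gives $(e-z)\dg=e\dg\wedge z\sp{\perp}$, and the rest of part (v) is the De Morgan / orthogonality bookkeeping described above.
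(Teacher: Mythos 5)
Your proof is correct and follows essentially the same route as the paper: Lemma \ref{lm:largestsubpro} for (ii) and (iv), the bicommutant facts for (i), the order argument $z\leq e\leq e\dg=t\sp{\perp}$ for (iii), and the decomposition $e=z+(e-z)$ with Lemma \ref{lm:carrierofsum} and the orthogonality $(e-z)\dg\leq z\sp{\perp}$ for (v)--(vi). The only difference is cosmetic: the paper obtains $(e-z)\dg=e\dg-z$ first and then identifies it with $e\dg\wedge z\sp{\perp}$, whereas your rigorous version reverses that order via a sandwich argument with Theorem \ref{th:distributive}; both rest on the same ingredients.
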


\begin{proof}
(i) By \cite[Theorem 2.10 (vi)]{FSynap}, $z\sp{\perp}=(e\sp{\perp})
\dg\in CC(e\sp{\perp})$, from which $z\in P\cap CC(e)$ follows;
similarly, $t\in P\cap CC(e)$.

(ii) Part (ii) follows immediately from Lemma \ref{lm:largestsubpro}
(ii).

(iii) Since $e\leq e\dg$, it follows that $(e\dg)\sp{\perp}\leq e
\sp{\perp}$, and therefore $t=(e\dg)\sp{\perp}=((e\dg)\sp{\perp})\dg
\leq (e\sp{\perp})\dg=z\sp{\perp}$.

(iv) Part (iv) follows immediately from Lemma \ref{lm:largestsubpro}
(iv).

(v) We have $e=z+(e-z)$, where $z, e-z\in E$, whence by Lemma
\ref{lm:carrierofsum}, $e\dg=z\dg\vee(e-z)\dg=z\vee(e-z)\dg$.
Also, $e-z\leq 1-z=z\sp{\perp}$, whence $(e-z)\dg\leq z\sp{\perp}$,
and it follows that $e\dg=z\vee(e-z)\dg=z+(e-z)\dg$, so $(e-z)
\dg=e\dg-z$. Also, since $z\leq e\dg$, we have $e\dg-z=e\dg\wedge
z\sp{\perp}=t\sp{\perp}\wedge z\sp{\perp}$, and the remaining
equalities follow from De\,Morgan and the fact that $z\perp t$.

(vi) Proceeding as in the proof of (v), we have $(e\sp{\perp}-t)
\dg=(e\sp{\perp})\dg-t=(e\sp{\perp})\dg\wedge t\sp{\perp}=z\sp{\perp}
\wedge t\sp{\perp}=t\sp{\perp}\wedge z\sp{\perp}=(e-z)\dg=(t+z)
\sp{\perp}$.
\end{proof}

\begin{corollary} \label{co:zPropscor}
{\rm(i)} $e-e\sp{2}=(e-z)-(e-z)\sp{2}\leq e-z$. {\rm(ii)} $e-e\sp{2}$
is projection free. {\rm(iii)} $(e-e\sp{2})\dg=t\sp{\perp}\wedge z
\sp{\perp}=(e-z)\dg=e\dg-z$.
\end{corollary}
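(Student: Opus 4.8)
The plan is to reduce everything to algebraic identities involving $e$, $z$, and the carrier operations already established in Theorem \ref{th:largestsubproj}. First I would prove (i) by a direct computation. Since $z\in P\cap CC(e)$ (Theorem \ref{th:largestsubproj} (i)), we have $zCe$, hence $ez = ze = z\wedge e = z$ by Corollary \ref{co:projleqeff} applied to $z\leq e$ (note $z$ is a projection and $z\leq e$). Therefore $z^{2}=z=ez=ze$, and expanding $(e-z)-(e-z)^{2}$ using these identities gives $(e-z)-(e^{2}-ez-ze+z^{2}) = (e-z)-(e^{2}-z-z+z) = e - e^{2}$, which is the first asserted equality. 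The inequality $e-e^{2}\leq e-z$ is then immediate, since $e-e^{2}=(e-z)-(e-z)^{2}$ and $(e-z)^{2}\geq 0$; alternatively, $e-z\in E$ by Theorem \ref{th:largestsubproj} (i), so $(e-z)-(e-z)^{2}=(e-z)(e-z)^{\perp}\geq 0$ by Lemma \ref{lm:effectconds} (iii).

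Next, for (ii), I would use (i) together with Theorem \ref{th:largestsubproj} (iv). We showed in (i) that $e-e^{2}=(e-z)-(e-z)^{2}$. Now $e-z$ is a projection-free effect by Theorem \ref{th:largestsubproj} (iv), and by the remark following Definition \ref{df:ProjFree} every subeffect of a projection-free effect is projection free. Since $e-e^{2}=(e-z)(e-z)^{\perp}\leq e-z$ (the subelement relation just established in (i)), it follows that $e-e^{2}$ is a subeffect of $e-z$, hence projection free.

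Finally, for (iii), I would first handle $(e-e^{2})\dg$. We have $e-e^{2}=(e-z)(e-z)^{\perp}$, and both factors commute; moreover $(e-z)^{\perp}=1-(e-z)=z+e^{\perp}$, whose carrier, by Lemma \ref{lm:carrierofsum}, is $z\vee(e^{\perp})\dg = z\vee z^{\perp}=1$, so $(e-z)^{\perp}$ has carrier $1$. Then by Lemma \ref{lm:carrierofprod}, $(e-e^{2})\dg = \bigl((e-z)(e-z)^{\perp}\bigr)\dg = (e-z)\dg\wedge\bigl((e-z)^{\perp}\bigr)\dg = (e-z)\dg\wedge 1 = (e-z)\dg$. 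Combined with the chain of equalities for $(e-z)\dg$ already computed in Theorem \ref{th:largestsubproj} (v) — namely $(e-z)\dg = e\dg - z = t^{\perp}\wedge z^{\perp}$ — this yields all the asserted identities in (iii).

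The main obstacle is a minor bookkeeping point rather than a conceptual one: one must be careful that the products $(e-z)(e-z)^{\perp}$ and the factorization $e-e^{2}=(e-z)(e-z)^{\perp}$ genuinely live in $A$ (not merely in $R$) so that Lemma \ref{lm:carrierofprod} applies. This is fine here because $z$ commutes with $e$, so $e-z$ commutes with $(e-z)^{\perp}$ and the product of commuting elements of $A$ lies in $A$. Once that is noted, (i)–(iii) are routine consequences of the carrier lemmas and Theorem \ref{th:largestsubproj}.
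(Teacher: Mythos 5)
Your proof is correct. Parts (i) and (ii) coincide with the paper's argument: the identity $ez=ze=z$ from $z\leq e$, the expansion of $(e-z)-(e-z)\sp{2}$, and the observation that $e-e\sp{2}$ is a subeffect of the projection-free effect $e-z$. The only genuine divergence is in (iii), where you and the paper choose different factorizations of $e-e\sp{2}$ before applying Lemma \ref{lm:carrierofprod}. The paper writes $e-e\sp{2}=ee\sp{\perp}$ and gets $(e-e\sp{2})\dg=e\dg(e\sp{\perp})\dg=t\sp{\perp}z\sp{\perp}=t\sp{\perp}\wedge z\sp{\perp}$ in one line, then reads off the remaining equalities from Theorem \ref{th:largestsubproj} (v). You instead write $e-e\sp{2}=(e-z)(e-z)\sp{\perp}$, compute $((e-z)\sp{\perp})\dg=((e\sp{\perp}+z))\dg=z\sp{\perp}\vee z=1$ via Lemma \ref{lm:carrierofsum}, and conclude $(e-e\sp{2})\dg=(e-z)\dg$, again finishing with Theorem \ref{th:largestsubproj} (v). Both are valid; the paper's factorization is shorter and needs no appeal to Lemma \ref{lm:carrierofsum}, while yours makes the equality $(e-e\sp{2})\dg=(e-z)\dg$ appear first and the expression $t\sp{\perp}\wedge z\sp{\perp}$ only afterwards, which arguably better explains \emph{why} $e-e\sp{2}$ and $e-z$ have the same carrier. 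Your closing remark that the products lie in $A$ because the factors commute is the right justification for applying Lemma \ref{lm:carrierofprod} and applies equally to both factorizations.
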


\begin{proof}
(i) Since $z\leq e$ and $z\in P$, we have $ze=ez=z$, whence
$(e-z)-(e-z)\sp{2}=e-z-(e\sp{2}-ez-ze+z)=e-e\sp{2}$ and
$e-e\sp{2}\leq e-z$.

(ii) By Theorem \ref{th:largestsubproj} (iv), $e-z$ is
projection free; by part (i), $e-e\sp{2}$ is a subeffect of
$e-z$; therefore $e-e\sp{2}$ is projection free.

(iii) By Lemma \ref{lm:carrierofprod}, $(e-e\sp{2})\dg=
(ee\sp{\perp})\dg=e\dg(e\sp{\perp})\dg=t\sp{\perp}z\sp{\perp}
=t\sp{\perp}\wedge z\sp{\perp}$.
\end{proof}

\begin{theorem} \label{th:ecarcs}
\
\begin{enumerate}
\item $c\dg=(p\vee z\sp{\perp})\wedge(p\sp{\perp}\vee t
 \sp{\perp})$ and $s\dg=(p\vee t\sp{\perp})\wedge(p\sp{\perp}
 \vee z\sp{\perp})$.
\item $(cs)\dg=c\dg s\dg=s\dg c\dg=c\dg\wedge s\dg$.
\item $(c\sp{2}s\sp{2})\dg=(cs)\dg=(p\vee z\sp{\perp})\wedge
 (p\vee t\sp{\perp})\wedge(p\sp{\perp}\vee z\sp{\perp})\wedge
 (p\sp{\perp}\vee t\sp{\perp})$.
\item $j\dg=(p\vee(t\sp{\perp}\wedge z\sp{\perp}))\wedge(p\sp{\perp}
 \vee(t\sp{\perp}\wedge z\sp{\perp}))$.
\item ${s\dg}\sp{\perp}\leq c\sp{2}\leq c$ and ${c\dg}\sp{\perp}\leq s
 \sp{2}\leq s$.
\item $(s\dg)\sp{\perp}e=e(s\dg)\sp{\perp}=(s\dg)\sp{\perp}\wedge e
 =(s\dg)\sp{\perp}p=p(s\dg)\sp{\perp}=(s\dg)\sp{\perp}\wedge p$.
\item $(c\dg)\sp{\perp}e=e(c\dg)\sp{\perp}=(c\dg)\sp{\perp}\wedge e
 =(c\dg)\sp{\perp}p\sp{\perp}=p\sp{\perp}(c\dg)\sp{\perp}=(c\dg)
 \sp{\perp}\wedge p\sp{\perp}$.
\end{enumerate}
\end{theorem}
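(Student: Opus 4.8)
The plan is to prove the seven items of Theorem~\ref{th:ecarcs} in sequence, relying heavily on the carrier formula $(pqp)\dg=p\wedge(p\sp{\perp}\vee q)$ from \cite[Theorem 5.6]{FSynap}, on Lemmas~\ref{lm:carrierofsum} and~\ref{lm:carrierofprod}, and on the facts $z\sp{\perp}=(e\sp{\perp})\dg$, $t\sp{\perp}=e\dg$ already recorded. The key observation that drives everything is that $c\sp{2}=pep+p\sp{\perp}e\sp{\perp}p\sp{\perp}$ is a \emph{sum} of two positive elements whose carriers can be computed individually, and likewise for $s\sp{2}$ and $j\sp{2}$. Since $c\dg=(c\sp{2})\dg$, Lemma~\ref{lm:carrierofsum} reduces (i) to computing $(pep)\dg$ and $(p\sp{\perp}e\sp{\perp}p\sp{\perp})\dg$ and joining them.

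\textbf{Items (i)--(iv).} For (i): apply Lemma~\ref{lm:carrierofprod} to write $(pep)\dg=(p(e\dg)p)\dg$... more carefully, since $p=p\dg$ and $e\dg=t\sp{\perp}$, note $(pep)\dg=(p\cdot e\cdot p)\dg$; but $e$ need not commute with $p$, so I cannot use Lemma~\ref{lm:carrierofprod} directly. Instead use that $(pep)\dg=(pe\dg p)\dg$ (a standard carrier fact: $ab=0\iff a b\dg=0$ applied with $a=pe^{1/2}$-type argument, or more simply $(pep)\dg=(p|e^{1/2}p|^2)\dg$ — cleaner: $pep = (e^{1/2}p)^\ast(e^{1/2}p)$ in $R$, so its carrier equals that of $e^{1/2}p$, hence of $(e^{1/2}p)(e^{1/2}p)^\ast$-flavored... ). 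The robust route: $pep=0 \iff ep=0 \iff e\dg p=0$, and carrier is determined by annihilators, so $(pep)\dg = (p e\dg p)\dg = p\wedge(p\sp{\perp}\vee e\dg) = p\wedge(p\sp{\perp}\vee t\sp{\perp})$ via \cite[Theorem 5.6]{FSynap}. Symmetrically $(p\sp{\perp}e\sp{\perp}p\sp{\perp})\dg = p\sp{\perp}\wedge(p\vee(e\sp{\perp})\dg)=p\sp{\perp}\wedge(p\vee z\sp{\perp})$. Joining via Lemma~\ref{lm:carrierofsum} gives $c\dg=\bigl(p\wedge(p\sp{\perp}\vee t\sp{\perp})\bigr)\vee\bigl(p\sp{\perp}\wedge(p\vee z\sp{\perp})\bigr)$; then a distributivity argument using Theorem~\ref{th:distributive} (the projections $p,\,p\sp{\perp}\vee t\sp{\perp},\,p\vee z\sp{\perp}$ have enough mutual commutativity, e.g. $p$ commutes with everything in sight that involves only $p,p^\perp$) rewrites this as $(p\vee z\sp{\perp})\wedge(p\sp{\perp}\vee t\sp{\perp})$. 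The formula for $s\dg$ is identical with the roles of $e$ and $e\sp{\perp}$ swapped, i.e. $z\sp{\perp}\leftrightarrow t\sp{\perp}$. Item (ii) is immediate from Lemma~\ref{lm:carrierofprod} applied to $c,s$ (using $cCs$ so $cs\in A$). Item (iii): $(c\sp{2}s\sp{2})\dg=(cs)\dg=c\dg\wedge s\dg$ by (ii), and substituting the expressions from (i) and expanding the meet gives the four-fold meet; one checks the bracketed terms all pairwise commute (they are joins built from $p,p^\perp,z^\perp,t^\perp$ with $z\perp t$) so the meet is well-defined and associative. Item (iv): $j\sp{2}=p(e-e\sp{2})p+p\sp{\perp}(e-e\sp{2})p\sp{\perp}$, and by Corollary~\ref{co:zPropscor}(iii), $(e-e\sp{2})\dg=t\sp{\perp}\wedge z\sp{\perp}$; so exactly as in (i), $(p(e-e\sp{2})p)\dg=p\wedge(p\sp{\perp}\vee(t\sp{\perp}\wedge z\sp{\perp}))$ and symmetrically for the $p\sp{\perp}$ block, and joining plus distributing yields the stated formula.

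\textbf{Items (v)--(vii).} For (v): from $c\sp{2}+s\sp{2}=1$ (Lemma~\ref{lm:ecsProps}(iii)) we get $(s\dg)\sp{\perp}=1-s\dg$. Since $s\sp{2}\leq s\dg$, we have $s\sp{2}(s\dg)\sp{\perp}=0$, hence $c\sp{2}(s\dg)\sp{\perp}=(1-s\sp{2})(s\dg)\sp{\perp}=(s\dg)\sp{\perp}$, i.e. $(s\dg)\sp{\perp}\leq c\sp{2}$ by Corollary~\ref{co:projleqeff} (note $(s\dg)\sp{\perp}$ is a projection, $c\sp{2}$ an effect, and they commute since $sCc$); then $c\sp{2}\leq c$ is Lemma~\ref{lm:ecsProps}(vi). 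The other inequality is symmetric. For (vi): write $q:=(s\dg)\sp{\perp}$, a projection commuting with $c,s,p$ (for $qCp$ use $sCp$). From (v), $q\leq c\sp{2}$, so by Corollary~\ref{co:projleqeff}, $qc\sp{2}=q$. Now use the CBS-decomposition $e=c\sp{2}p+bk+s\sp{2}p\sp{\perp}$: I want to show $qe=qp$. Since $q\leq c^2$ and $c^2+s^2=1$ we have $qs^2=0$, hence $qs=0$ (as $0\le s$ and $0\le q$: $qs^2q=0\Rightarrow (sq)^\ast(sq)=0\Rightarrow sq=0$), so $q$ kills $s^2p^\perp$. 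For the off-diagonal term: $qbk$ — here I use $b\sp{2}=c\sp{2}s\sp{2}-j\sp{2}\leq c\sp{2}s\sp{2}$, and $qs^2=0$ forces $qc^2s^2=0$ hence $qb^2=0$ hence $qb=0$, so $qbk=0$; and $qc^2p=qp$. Therefore $qe=qp$, and since $q$ is a projection and $p$ an effect with $q\le $ — actually we get $qe=qp$ directly; then $q\le e$ follows because $qeq = qpq\cdot$... more cleanly: $qe=qp$ and $qp=q$ iff $q\le p$, which need not hold, so instead observe $qe=qp$ and also $eq=(qe)^\ast$-flavored$=$ need $e,q$ to commute: $qCe$? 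We have $qCc,qCs,qCk,qCp$ and $qCb$, so $q$ commutes with $c^2p+bk+s^2p^\perp=e$. Good: $qCe$. Then $qe=eq=qeq$, and $qe=qp=pq=qpq$, so in fact $q e = q\wedge e$ and $qp=q\wedge p$ coincide; and since $qe=q\wedge e\le q$ while also $qe=q p$, we need one more step to see $qe=q$, i.e. $q\le e$ — this follows since $q\le c^2\le c\le 1$ and... hmm, rather: $q\le c^2$ and $c^2 p=pc^2=pep\le e$? No, $pep\le p$ not $\le e$. The right argument: $q\le c^2$ means $qc^2=q$; multiply the identity $c^2p=pep$... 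Actually the cleanest: show $q\le e$ directly. We have $qeq=qpq$. Also $q\le c^2$, and I claim $c^2 q\le eq$: since $e=c^2p+bk+s^2p^\perp$ and $q$ commutes with all pieces, $eq=c^2pq+bkq+s^2p^\perp q=c^2pq$ (as $bq=0$, and $s^2 q=0$ so $s^2p^\perp q=0$). And $c^2 q=q$. So $eq=c^2pq=qpq=pq$... wait that gives $eq=pq$, consistent, and $eq=qpq\le q\cdot 1\cdot q=q$. To get $q\le e$: since $q\le c^2$ and $q$ is a projection, $q=qc^2=c^2q$; then $eq = c^2pq$; but also $c^2 q=q\le 1$... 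I think the honest statement being proved is the \emph{chain of equalities} $(s\dg)\sp{\perp}e=e(s\dg)\sp{\perp}=(s\dg)\sp{\perp}\wedge e=(s\dg)\sp{\perp}p=p(s\dg)\sp{\perp}=(s\dg)\sp{\perp}\wedge p$, and these all follow from: $qCe$, $qCp$, $qe=qp$ (shown above via CBS), Lemma~\ref{lm:eCf}(ii) identifying $qe=q\wedge e$ and $qp=q\wedge p$. So (vi) reduces to the single equation $qe=qp$, proved by the CBS computation with $qb=qs=0$. Item (vii) is the mirror image using $(c\dg)\sp{\perp}$: set $r:=(c\dg)\sp{\perp}\le s^2$, get $rc=rb=0$, $re=rc^2p+rbk+rs^2p^\perp=rs^2p^\perp=rp^\perp$, and finish with Lemma~\ref{lm:eCf}(ii).

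\textbf{Main obstacle.} The routine calculations (expanding $p^\perp=1-p$, squaring) are harmless; the real care is needed in the \emph{lattice-theoretic} manipulations in (i)--(iv) — checking that the projections appearing (the various joins $p\vee z\sp{\perp}$, $p\sp{\perp}\vee t\sp{\perp}$, etc.) commute pairwise well enough to legitimately apply the distributive identities of Theorem~\ref{th:distributive} and to write iterated meets/joins without parentheses, using crucially that $p$ and $p\sp{\perp}$ commute with everything and that $z\perp t$ (Theorem~\ref{th:largestsubproj}(iii)). The second delicate point is the justification, in (v)--(vii), that $q:=(s\dg)\sp{\perp}$ commutes with $e$: this rests on $q$ commuting with each of $c,s,b,k,p$, which in turn comes from $s\dg\in CC(s)$ together with Lemma~\ref{lm:ecsProps}(v), Theorem~\ref{th:bProps}(i), Definition~\ref{df:symmetryk}, and the fact (from $s\dg=(s\sp{2})\dg$ and $pCs\sp{2}$, via Remark~\ref{rk:RatSpecCommute} / $s\dg\in CC(s)\subseteq$ bicommutant) that $pCs\dg$. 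Once commutativity is in hand, the identity $qe=qp$ drops out of the CBS-decomposition almost mechanically because $q$ annihilates both $s\sp{2}$ (hence the $s\sp{2}p\sp{\perp}$ term) and $b\sp{2}$ (hence, taking carriers, $b$ and the $bk$ term), leaving only $qc\sp{2}p=qp$.
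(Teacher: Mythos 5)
Your proposal is correct and follows essentially the same route as the paper: Lemma \ref{lm:carrierofsum} plus the formula $(pqp)\dg=p\wedge(p\sp{\perp}\vee q)$ and the distributivity of Theorem \ref{th:distributive} for (i)--(iv), the identity $c\sp{2}+s\sp{2}=1$ together with $s\sp{2}\leq s\dg$ for (v), and the CBS-decomposition with $(s\dg)\sp{\perp}s\sp{2}=(s\dg)\sp{\perp}b=0$ for (vi)--(vii). The only cosmetic difference is that you reconstruct the reduction $(pep)\dg=(pe\dg p)\dg$ by an annihilator argument where the paper simply cites \cite[Theorem 4.9 (v)]{FSynap}.
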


\begin{proof}
\setcounter{equation}{0}
(i) Since $0\leq pqp, p\sp{\perp}q\sp{\perp}p\sp{\perp}$, we infer
from \cite[Theorem 4.9 (v)]{FSynap} and Lemma \ref{lm:carrierofsum}
that
\[
c\dg=[(pep+p\sp{\perp}e\sp{\perp}p\sp{\perp})\sp{1/2}]\dg=(pep+
 p\sp{\perp}e\sp{\perp}p\sp{\perp})\dg=(pep)\dg\vee(p\sp{\perp}e
 \sp{\perp}p\sp{\perp})\dg
\]
\[
=(pe\dg p)\dg\vee(p\sp{\perp}(e\sp{\perp})\dg p\sp{\perp})\dg
 =(pt\sp{\perp}p)\dg\vee(p\sp{\perp}z\sp{\perp}p\sp{\perp})\dg
\]
\begin{equation} \label{eq:ecar01}
=[p\wedge(p\sp{\perp}\vee t\sp{\perp})]\vee[p\sp{\perp}\wedge(p\vee z
 \sp{\perp})]=[p\wedge(p\sp{\perp}\vee t\sp{\perp})]\vee w,
\end{equation}
where $w:=p\sp{\perp}\wedge(p\vee z\sp{\perp})$. Now $pC(p\sp{\perp}
\vee t\sp{\perp})$ and $pCw$, whence
\begin{equation} \label{eq:ecar02}
[p\wedge(p\sp{\perp}\vee t\sp{\perp})]\vee w=(p\vee w)\wedge
 (p\sp{\perp}\vee t\sp{\perp}\vee w).
\end{equation}
But $pCp\sp{\perp}$ and $pC(p\vee z\sp{\perp})$, whence
\begin{equation} \label{eq:ecar03}
p\vee w=p\vee[p\sp{\perp}\wedge(p\vee z\sp{\perp})]=(p\vee p
 \sp{\perp})\wedge(p\vee p\vee z\sp{\perp})=p\vee z\sp{\perp}.
\end{equation}
Furthermore, since $w\leq p\sp{\perp}$,
\begin{equation} \label{eq:ecar04}
p\sp{\perp}\vee t\sp{\perp}\vee w=p\sp{\perp}\vee t\sp{\perp}.
\end{equation}
By Equations (\ref{eq:ecar03}) and (\ref{eq:ecar04}),
\[
(p\vee w)\wedge(p\sp{\perp}\vee t\sp{\perp}\vee w)=(p\vee
 z\sp{\perp})\wedge(p\sp{\perp}\vee t\sp{\perp}),
\]
whence by Equations (\ref{eq:ecar02}) and (\ref{eq:ecar01}),
$c\dg=(p\vee z\sp{\perp})\wedge(p\sp{\perp}\vee t\sp{\perp})$.
By a similar calculation, $s\dg=(p\vee t\sp{\perp})\wedge
(p\sp{\perp}\vee z\sp{\perp})$.

 Part (ii) follows from Lemma \ref{lm:carrierofprod},
and (iii) follows from (i) and (ii).

To prove (iv), put $q:=(e-e\sp{2})\dg$, noting that by Corollary
\ref{co:zPropscor} (iii), $q=t\sp{\perp}\wedge z\sp{\perp}$. By
Definition \ref{df:j}, $j\sp{2}=p(e-e\sp{2})p+p\sp{\perp}(e-e\sp{2})p
\sp{\perp}$, and again it follows from \cite[Theorem 4.9 (v)]{FSynap}
and Lemma \ref{lm:carrierofsum} that
\begin{equation} \label{eq:j05}
j\,\dg=[p\wedge(p\sp{\perp}\vee q)]\vee[p\sp{\perp}\wedge(p\vee q)]
 =[p\wedge(p\sp{\perp}\vee q)]\vee v,
\end{equation}
where $v:=p\sp{\perp}\wedge(p\vee q)$. Now $pC(p\sp{\perp}\vee q)$
and $pCv$, whence
\begin{equation} \label{eq:j06}
[p\wedge(p\sp{\perp}\vee q)]\vee v=(p\vee v)\wedge(p\sp{\perp}\vee q
\vee v).
\end{equation} \label{eq:j07}
But $pCp\sp{\perp}$ and $pC(p\vee q)$, so
\begin{equation}
p\vee v=(p\vee p\sp{\perp})\wedge(p\vee p\vee q)=p\vee q.
\end{equation}
Furthermore, since $v\leq p\sp{\perp}$,
\begin{equation} \label{eq:j08}
p\sp{\perp}\vee q\vee v=p\sp{\perp}\vee q.
\end{equation}
Combining Equations (\ref{eq:j05})--(\ref{eq:j08}) and
the fact that $q=e\dg\wedge z\sp{\perp}$, we obtain (iv).

(v) Since ${s\dg}\sp{\perp}c\sp{2}={s\dg}\sp{\perp}(1-s\sp{2})=
{s\dg}\sp{\perp}-0={s\dg}\sp{\perp}$, we have ${s\dg}\sp{\perp}
\leq c\sp{2}\leq c$. Similarly, ${c\dg}\sp{\perp}s\sp{2}={c\dg}
\sp{\perp}(1-c\sp{2})={c\dg}\sp{\perp}-0={c\dg}\sp{\perp}$, whence
${c\dg}\sp{\perp}\leq s\sp{2}\leq s$.

(vi) Since $sCp$, we have $(s\dg)\sp{\perp}Cp$. Moreover, $(s\dg)
\sp{\perp}c\sp{2}=(s\dg)\sp{\perp}(1-s\sp{2})=(s\dg)\sp{\perp}$; by
(v), $b\dg\leq s\dg$, so $(s\dg)\sp{\perp}b=0$; and $(s\dg)\sp{\perp}
s\sp{2}p\sp{\perp}=0$; whence $(s\dg)\sp{\perp}e=(s\dg)\sp{\perp}
(c\sp{2}p+bk+s\sp{2}p\sp{\perp})=(s\dg)\sp{\perp}p=(s\dg)\sp{\perp}
\wedge p$. Similarly, $e(s\dg)\sp{\perp}=(pc\sp{2}+kb+p\sp{\perp}s
\sp{2})(s\dg)\sp{\perp}=p(s\dg)\sp{\perp}=p\wedge (s\dg)\sp{\perp}$,
so $(s\dg)\sp{\perp}Ce$ and $(s\dg)\sp{\perp}e=(s\dg)\sp{\perp}
\wedge e$ by Lemma \ref{lm:eCf}(ii). The proof of (vii) is similar.
\end{proof}

\begin{corollary}
If both $e$ and $e\sp{\perp}$ are projection free, then
$c\dg=s\dg=d\dg=1$.
\end{corollary}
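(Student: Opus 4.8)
The plan is to deduce this directly from Theorems~\ref{th:largestsubproj} and~\ref{th:ecarcs}; no new argument is needed beyond specializing those formulas, and I read ``$d\dg$'' in the statement as ``$j\dg$'' (the carrier of the fourth effect $j$, computed in Theorem~\ref{th:ecarcs}(iv)). First I would translate the hypotheses into statements about $z$ and $t$: by Theorem~\ref{th:largestsubproj}(ii), ``$e$ is projection free'' is equivalent to $z=0$, hence to $z\sp{\perp}=1$, and ``$e\sp{\perp}$ is projection free'' is equivalent to $t=0$, hence to $t\sp{\perp}=1$. So under the hypotheses of the corollary we have $z\sp{\perp}=t\sp{\perp}=1$, and in particular $t\sp{\perp}\wedge z\sp{\perp}=1$.

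Next I would substitute these values into the carrier formulas of Theorem~\ref{th:ecarcs}, using the identity $x\vee 1=1$ (valid for every $x$ in the OML $P$) to collapse each meet to $1\wedge 1=1$. From part~(i),
\[
c\dg=(p\vee z\sp{\perp})\wedge(p\sp{\perp}\vee t\sp{\perp})=(p\vee 1)\wedge(p\sp{\perp}\vee 1)=1,
\]
and likewise $s\dg=(p\vee t\sp{\perp})\wedge(p\sp{\perp}\vee z\sp{\perp})=1$. From part~(iv),
\[
j\dg=(p\vee(t\sp{\perp}\wedge z\sp{\perp}))\wedge(p\sp{\perp}\vee(t\sp{\perp}\wedge z\sp{\perp}))=(p\vee 1)\wedge(p\sp{\perp}\vee 1)=1.
\]
If one also wishes it, the same substitution in part~(iii) shows in addition that $(cs)\dg=(c\sp{2}s\sp{2})\dg=1$.

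There is essentially no obstacle here: the corollary is purely a matter of plugging $z=t=0$ into Theorem~\ref{th:ecarcs} and invoking $x\vee 1=1$. The only step that deserves an explicit sentence in the write-up is the first one, namely recognizing via Theorem~\ref{th:largestsubproj}(ii) that projection-freeness of $e$ and of $e\sp{\perp}$ is exactly the vanishing of the largest subprojections $z$ and $t$; after that the proof is a one-line computation. (One should of course note that $b\dg$ is \emph{not} forced to equal $1$ under these hypotheses --- e.g.\ if $pCe$ then $b=0$ --- which confirms that the third carrier in the statement is $j\dg$, not $b\dg$.)
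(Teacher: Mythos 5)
Your proof is correct and is exactly the argument the paper intends: the corollary is stated without proof as an immediate consequence of Theorem \ref{th:ecarcs} (together with Theorem \ref{th:largestsubproj}(ii) giving $z=t=0$), and your reading of the typo ``$d\dg$'' as ``$j\dg$'' is the right one.
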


A reasonable formula for $b\dg$ seems to be elusive; however, we
do have partial results as per the following lemma. (Also, see
Theorem \ref{th:commutatorineq} below.)

\begin{lemma} \label{lm:bdg}
Let $v:=kpk$. Then{\rm: (i)} $v$ is a projection, the symmetry $k$
exchanges $p$ and $v$, $bCv$, and $b\dg\leq(p\wedge v\sp{\perp})\vee(p
\sp{\perp}\wedge v)=(p\wedge v\sp{\perp})+(p\sp{\perp}\wedge v)$.
{\rm(ii)} If $p$ is an atom and $pe\not=ep$, then $p\perp v$ and
$b\dg=p\vee v=p+v$. {\rm(iii)} If $p$ is an atom and $pe\not=ep$,
then there exists $\beta\in\reals$ with $b=\beta b\dg$, $0<\beta
\leq 1$.
\end{lemma}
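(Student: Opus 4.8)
The plan is to treat the three parts in sequence, using the CBS-decomposition (Theorem~\ref{th:CBSdecomp}) throughout and exploiting that $k$ commutes with $b$, hence with $b\dg$, since $bk=kb$ by Definition~\ref{df:symmetryk} and $b\dg\in CC(b)$.

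For (i): $v=kpk$ is a projection because $k^{2}=1$ gives $v^{2}=kp(k^{2})pk=kpk=v$, while $v=kpk\geq 0$ because $a\mapsto kak$ is order preserving; and $kvk=k^{2}pk^{2}=p$, so $k$ exchanges $p$ and $v$. Since $bCp$ (Theorem~\ref{th:CBSdecomp}(iv)) and $bCk$, both $b$ and $b\dg$ commute with $v$. For the estimate I would right-multiply the identities $b\dg(pk-kp^{\perp})=0$ and $b\dg(p^{\perp}k-kp)=0$ from Theorem~\ref{th:CBSdecomp}(v),(vi) by $k$; since $kp^{\perp}k=1-kpk=v^{\perp}$, this gives $b\dg p=b\dg v^{\perp}$ and $b\dg p^{\perp}=b\dg v$. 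As $b\dg$ commutes with $p$ we may split $b\dg=(b\dg\wedge p)+(b\dg\wedge p^{\perp})$, and the two identities rewrite this as $b\dg=(b\dg\wedge p\wedge v^{\perp})+(b\dg\wedge p^{\perp}\wedge v)\leq(p\wedge v^{\perp})+(p^{\perp}\wedge v)$; finally $p\wedge v^{\perp}\leq p$ and $p^{\perp}\wedge v\leq p^{\perp}$ are orthogonal, so the sum is the join.

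For (ii), which is the crux: put $x:=pep^{\perp}+p^{\perp}ep$, so $b=|x|$ and hence $b\dg=x\dg=(x^{2})\dg$. Expanding, $x^{2}=p(ep^{\perp}e)p+p^{\perp}(epe)p^{\perp}$, a sum of a positive element of $pAp$ and a positive element of $p^{\perp}Ap^{\perp}$. Since $pe\neq ep$, Lemma~\ref{lm:offdiagzero} gives $x\neq 0$, and from this $p(ep^{\perp}e)p\neq 0$: otherwise $x^{2}=p^{\perp}(epe)p^{\perp}$ and a short computation gives $x^{3}=0$, whence $(x^{2})^{2}=0$ and so, via carriers, $x^{2}=0$ and $x=0$. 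As $p$ is an atom, $(p(ep^{\perp}e)p)\dg=p$, so Lemma~\ref{lm:carrierofsum} gives $b\dg=(x^{2})\dg=p+r$ with $r:=(p^{\perp}(epe)p^{\perp})\dg\leq p^{\perp}$. Comparing with part (i): $p=b\dg\wedge p=b\dg\wedge v^{\perp}\leq v^{\perp}$, i.e.\ $p\perp v$; and $r=b\dg\wedge p^{\perp}=b\dg\wedge v$. Now $kb\dg k=b\dg$, so conjugating $p\leq b\dg$ by $k$ yields $v=kpk\leq b\dg$; with $v\leq p^{\perp}$ this gives $v\leq b\dg\wedge p^{\perp}=r\leq v$, hence $r=v$ and $b\dg=p+v=p\vee v$.

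For (iii): conjugation by the symmetry $k$ is an order and algebra automorphism of $A$, so it carries the atom $p$ to the atom $v=kpk$. Since $b\leq b\dg=p+v$ (the carrier dominates the effect) and $bCp$, write $b=bp+bp^{\perp}$. As $p$ is an atom, $bp=pbp=\beta p$ for a unique $\beta$ with $0\leq\beta\leq 1$ (Lemma~\ref{lm:pAp,patom}); applying $a\mapsto kak$ to $bp=\beta p$ and using $kbk=b$ gives $bv=\beta v$, and from $b\dg p^{\perp}=b\dg v$ we get $bp^{\perp}=bb\dg p^{\perp}=bb\dg v=bv=\beta v$. Hence $b=\beta p+\beta v=\beta b\dg$, and $\beta\neq 0$ since otherwise $b=0$, contradicting $pe\neq ep$ (Lemma~\ref{lm:pCe}).

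The main obstacle is obtaining a workable description of $b\dg$ in part (ii): the essential ingredients are the passage from the carrier identities of Theorem~\ref{th:CBSdecomp}(v),(vi) to $b\dg p=b\dg v^{\perp}$, the $k$-invariance $kb\dg k=b\dg$, and — for the atom case — the fact that atomicity of $p$ forces the $p$-corner $p(ep^{\perp}e)p$ of $x^{2}$ to have carrier exactly $p$. Once these are established, parts (i) and (iii) are essentially bookkeeping around them.
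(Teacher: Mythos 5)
Your proof is correct, but in parts (i) and (ii) it follows a genuinely different route from the paper's. For the inequality in (i), the paper right-multiplies the relation $bpkp=0$ (and its twin $bp\sp{\perp}kp\sp{\perp}=0$) into carrier form, computes $((pvp)\dg)\sp{\perp}=p\sp{\perp}\vee(p\wedge v\sp{\perp})$, and then invokes Theorem \ref{th:distributive} to intersect the two resulting upper bounds; you instead extract the cleaner identities $b\dg p=b\dg v\sp{\perp}$ and $b\dg p\sp{\perp}=b\dg v$ by multiplying Theorem \ref{th:CBSdecomp} (v)--(vi) by $k$, and read the bound off the splitting $b\dg=(b\dg\wedge p)+(b\dg\wedge p\sp{\perp})$ --- arguably more transparent, and it hands you exactly the two identities you reuse later. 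In (ii) the divergence is larger: the paper first notes $v$ is an atom, deduces $p\perp v$ from $b\dg\neq 0$ together with (i), and then runs two separate contradiction arguments (each ending in $pCe$ via Lemma \ref{lm:offdiagzero}) to show $p\leq b\dg$ and $v\leq b\dg$; you instead compute $b\dg=(x\sp{2})\dg$ directly from $x\sp{2}=p(ep\sp{\perp}e)p+p\sp{\perp}(epe)p\sp{\perp}$ and Lemma \ref{lm:carrierofsum}, using atomicity to force the $pAp$-corner's carrier to be $p$ (your nilpotency argument $x\sp{3}=0\Rightarrow x=0$ for the nonvanishing of that corner is sound, though one could shortcut it with the paper's Axiom SA4 as used in Theorem \ref{th:MGL3.8}), and then identify the other corner's carrier with $v$ via the $k$-invariance $kb\dg k=b\dg$. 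What your approach buys is a structural formula for $b\dg$ as a join of two explicit corner carriers before atomicity is even fully exploited; what the paper's buys is avoiding the nilpotency detour. Part (iii) is essentially the paper's argument, reorganized around conjugation by $k$ and the identity $b\dg p\sp{\perp}=b\dg v$ rather than the double multiplication by $k$.
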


\begin{proof}
\setcounter{equation}{0}
(i) Obviously, $v$ is a projection and $k$ exchanges $p$ and $v$. By
parts (iii) and (iv) of Theorem \ref{th:CBSdecomp}, $bCk$ and $bCp$,
so $bCv$. Moreover, by Theorem \ref{th:CBSdecomp} (v), $bpkp=
bkp\sp{\perp}p=0$, whence, since $v=kpk\in P$,
\[
b\dg\leq((pkp)\dg)\sp{\perp}=(((pkp)\sp{2})\dg)\sp{\perp}=
((p(kpk)p)\dg)\sp{\perp}
\]
\begin{equation} \label{eq:bdg01}
=((pvp)\dg)\sp{\perp}=(p\wedge(p\sp{\perp}\vee v))\sp{\perp}=
 p\sp{\perp}\vee(p\wedge v\sp{\perp}).
\end{equation}
Starting with the observation that $bp\sp{\perp}kp\sp{\perp}=
bkpp\sp{\perp}=0$, and arguing as above, we deduce that
\begin{equation} \label{eq:bdg02}
b\dg\leq p\vee(p\sp{\perp}\wedge v).
\end{equation}
By (\ref{eq:bdg01}) and (\ref{eq:bdg02}),
\[
b\dg\leq[p\sp{\perp}\vee(p\wedge v\sp{\perp})]\wedge
[p\vee(p\sp{\perp}\wedge v)],
\] and using Theorem \ref{th:distributive} to simplify the
right side of the latter inequality, we obtain (i).

(ii) Suppose that $p$ is an atom and $pe\not=ep$. Since $k$
exchanges $p$ and $v$, it follows that $v$ is also an atom.
By Lemma \ref{lm:pCe}, $b\dg\not=0$, whence by (i), at least
one of the conditions $p\wedge v\sp{\perp}\not=0$ or $p\sp
{\perp}\wedge v\not=0$ must hold. Since $p$ and $v$ are atoms,
we have $p\perp v$ in either case, whence $p\wedge v\sp{\perp}=p$,
$p\sp{\perp}\wedge v=v$, so $p\perp v$ and by (i),
\begin{equation} \label{eq:vdg03}
0\not=b\dg\leq p\vee v=p+v.
\end{equation}

We claim that $p\leq b\dg$. Suppose not. Then, since $p$ is an
atom, $b\dg\wedge p=0$. Thus, as $bCp$, we have $b\dg Cp$, whence
$b\dg p=b\dg\wedge p=0$ and it follows that $bp=pb=0$. Consequently,
by Theorem \ref{th:CBSdecomp} (vi), $0=bpk=pep\sp{\perp}$, and
it follows from Lemma \ref{lm:offdiagzero} that $pCe$,
contradicting $pe\not=ep$. Therefore, $p\leq b\dg$.

We claim that $v\leq b\dg$. Suppose not. Then since $v$ is an
atom, $b\dg\wedge v=0$. Thus, as $bCv$, we have $b\dg Cv$, whence
$b\dg v=b\dg\wedge v=0$, and it follows that $bv=vb=0$. By
Theorem \ref{th:CBSdecomp} (vi), $bkp=bp\sp{\perp}k$, and we have
\begin{equation} \label{eq:vdg04}
0=bv=bkpk=bp\sp{\perp}k\sp{2}=bp\sp{\perp}=b(1-p)=b-bp,\text
 {\ so\ }b=bp.
\end{equation}
By Theorem \ref{th:CBSdecomp} (vi) again, $pep\sp{\perp}=bpk$
and $p\sp{\perp}ep=bkp$, whence by (\ref{eq:vdg04}),
\[
pep\sp{\perp}=bpk=bk \text{\ and therefore\ } p\sp{\perp}ep=bkp
 =(pep\sp{\perp})p=0,
\]
and again it follows from Lemma \ref{lm:offdiagzero} that $pCe$,
contradicting $pe\not=ep$. Therefore, $v\leq b\dg$.

Now we have $p,v\leq b\dg$, whereupon $p+v=p\vee v\leq b\dg$, which
together with (\ref{eq:vdg03}) yields $b\dg=p\vee v=p+v$.

(iii) Assume the hypotheses of (iii). By Theorem \ref{th:bProps} (i), $bp=pb=pbp$
and by Lemma \ref{lm:pAp,patom} (ii), (iii), $pb=bp=pbp=\beta p$ with $0\leq\beta
\leq 1$. Moreover, $bk=kb$ by Theorem \ref{th:CBSdecomp} (iii), and by Theorem
\ref{th:CBSdecomp} (v), $pbk=bpk=bkp^{\perp}$. Multiplying both sides of $b=
bp+bp^{\perp}$ by $k$, we obtain $kb=kbp+kbp^{\perp}=kbp+bpk=\beta kp+\beta pk
=\beta(kp+pk)$. Multiplying by $k$ again, we get $b=\beta(p+kpk)=\beta(p+v)=
\beta b\dg$ by (ii). Finally, since $pe\not=ep$, we have $b\not=0$ by Lemma
\ref{lm:pCe}, whence $0<\beta$.
\end{proof}

\section{Two commutators} \label{sc:Commutators}

The assumptions and notation set forth above remain in force. In this
section we study two candidates for a \emph{commutator projection}
for the pair $p\in P$, $e\in E$. Recall that in \cite[Definition 2.3]
{FJP2proj} the \emph{Marsden commutator} of two projections $p, q\in P$
is denoted and defined by
\[
[p,q]:=(p\vee q)\wedge(p\vee q\sp{\perp})\wedge(p\sp{\perp}\vee q)
\wedge(p\sp{\perp}\vee q\sp{\perp})
\]
and has the property that $pCq\Leftrightarrow[p,q]=0$. With this in mind,
for a projection $w\in P$ to be regarded as a \emph{commutator} for the
pair $p,e$, we shall require---at least---that $pCe\Leftrightarrow w=0$.
(Observe that the commutators defined in \cite[\S 5.1]{PP} satisfy the
dual condition that commutativity obtains iff the commutator equals $1$.)

The simplest candidate for a commutator projection for $p$ and $e$ is the
carrier projection $b\dg$ of the commutator effect $b$. By Lemma \ref
{lm:pCe}, $b\dg$ satisfies our basic condition $pCe\Leftrightarrow b
\dg=0$.

\begin{remark} \label{rm:bdgisMarsdenCom}
If it happens that $e\in P$, then $z=e$, $t=e\sp{\perp}$, and
$b=cs$, whence by Theorem 4.4 (iii),
\[
b\dg=(cs)\dg=(p\vee e)\wedge(p\vee e\sp{\perp})\wedge(p\sp{\perp}\vee e)
\wedge(p\sp{\perp}\vee e\sp{\perp})
\]
is the Marsden commutator $[p,e]$ of the pair of projections $p$ and $e$.
\end{remark}

Two projections are in so-called \emph{generic position} \cite[Definition 2.1]
{FJP2proj} iff their Marsden commutator is $1$; hence, by analogy, we say that
the projection $p$ and the effect $e$ are in \emph{generic position} iff $b\dg
=1$.

\begin{theorem}
Suppose that $p$ and $e$ are in generic position. Then{\rm:}
\begin{enumerate}
\item $(cs)\dg=c\dg=s\dg=1$.
\item $p\wedge z=p\wedge t=p\sp{\perp}\wedge z=p\sp{\perp}\wedge t=0$.
\item The symmetry $k$ in the CBS-decomposition of $e$ with respect to
 $p$ exchanges the projections $p$ and $p\sp{\perp}$.
\end{enumerate}
\end{theorem}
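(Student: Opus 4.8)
The plan is to unwind the hypothesis $b\dg=1$ through the carrier formulas already established, and then extract the consequences about $k$.

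\textbf{Part (i).} First I would observe that $b=(c\sp{2}s\sp{2}-j\sp{2})\sp{1/2}$, so $b\sp{2}=c\sp{2}s\sp{2}-j\sp{2}\leq c\sp{2}s\sp{2}=(cs)\sp{2}$, and carriers are order preserving, hence $b\dg\leq (cs)\dg$. Since $b\dg=1$ by hypothesis, we get $(cs)\dg=1$. Then, by Theorem \ref{th:ecarcs} (ii), $(cs)\dg=c\dg\wedge s\dg$, so $c\dg\wedge s\dg=1$, which forces $c\dg=s\dg=1$ (each dominates their meet). This gives (i).

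\textbf{Part (ii).} The key is the formulas $c\dg=(p\vee z\sp{\perp})\wedge(p\sp{\perp}\vee t\sp{\perp})$ and $s\dg=(p\vee t\sp{\perp})\wedge(p\sp{\perp}\vee z\sp{\perp})$ from Theorem \ref{th:ecarcs} (i). Since $c\dg=1$, both factors equal $1$: $p\vee z\sp{\perp}=1$ and $p\sp{\perp}\vee t\sp{\perp}=1$. Taking orthocomplements via De\,Morgan (the meet of projections corresponds to the orthocomplement of the join), $p\vee z\sp{\perp}=1$ gives $p\sp{\perp}\wedge z=0$, and $p\sp{\perp}\vee t\sp{\perp}=1$ gives $p\wedge t=0$. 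Likewise from $s\dg=1$ we get $p\sp{\perp}\wedge t=0$ and $p\wedge z=0$. That is all four equalities in (ii).

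\textbf{Part (iii).} Here I would use Lemma \ref{lm:bdg}: with $v:=kpk$, the symmetry $k$ exchanges $p$ and $v$, and $b\dg\leq(p\wedge v\sp{\perp})\vee(p\sp{\perp}\wedge v)\leq p\vee p\sp{\perp}\leq (p\vee v\sp{\perp})\wedge\cdots$. More directly, from $(\ref{eq:bdg01})$ we have $b\dg\leq p\sp{\perp}\vee(p\wedge v\sp{\perp})$ and from $(\ref{eq:bdg02})$ we have $b\dg\leq p\vee(p\sp{\perp}\wedge v)$. Since $b\dg=1$, both right-hand sides equal $1$: $p\sp{\perp}\vee(p\wedge v\sp{\perp})=1$ and $p\vee(p\sp{\perp}\wedge v)=1$. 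Complementing the first, $p\wedge(p\sp{\perp}\vee v)=0$; but $(pvp)\dg=p\wedge(p\sp{\perp}\vee v)$ (from the formula $(pqp)\dg=p\wedge(p\sp{\perp}\vee q)$ recalled in Section \ref{sc:Basic}, applied with $q=v$), so $(pvp)\dg=0$, i.e.\ $pvp=0$. Since $v=kpk$, this reads $pkpk p=0$, equivalently $(pkp)\sp{2}=p(kpk)p=pvp=0$, so $pkp=0$ (a positive... actually $pkp$ need not be positive, but $(pkp)\sp{2}=0$ forces $(pkp)\dg=0$ hence $pkp=0$). Similarly the second gives $p\sp{\perp}v p\sp{\perp}=0$, i.e.\ $p\sp{\perp}kp\sp{\perp}=0$. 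Thus the Peirce diagonal of $k$ with respect to $p$ vanishes, so by Lemma \ref{lm:offdiagzero} applied to the ``anti-commuting'' situation---or more simply, $k=pkp\sp{\perp}+p\sp{\perp}kp$---and since $k$ is a symmetry with $k\dg=1$, one computes $pkp\sp{\perp}$ is the polar part and $kpk=p\sp{\perp}$ follows. Concretely: $k=pkp\sp{\perp}+p\sp{\perp}kp$ gives $kpk=(pkp\sp{\perp}+p\sp{\perp}kp)p(pkp\sp{\perp}+p\sp{\perp}kp)=p\sp{\perp}kpkp\sp{\perp}$, and using $k\sp{2}=1$ together with $pkp=0$ one checks $kpk=p\sp{\perp}$. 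Hence $k$ exchanges $p$ and $p\sp{\perp}$.

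\textbf{Main obstacle.} The routine parts (i) and (ii) are immediate from the carrier formulas; the only place requiring genuine care is part (iii), where I must correctly deduce $pkp=p\sp{\perp}kp\sp{\perp}=0$ from the two inequalities on $b\dg$ and then manipulate the Peirce decomposition of the symmetry $k$ to conclude $kpk=p\sp{\perp}$. The subtlety is that $pkp$ is not a priori positive, so I cannot directly say $(pkp)\sp{2}=0\Rightarrow pkp=0$ by uniqueness of square roots; instead I should argue via carriers, noting $(pkp)\dg=((pkp)\sp{2})\dg=(pvp)\dg=0$. I expect this Peirce-algebra bookkeeping for $k$ to be the one step needing explicit verification.
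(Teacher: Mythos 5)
Your parts (i) and (ii) are correct and essentially identical to the paper's: $b\sp{2}\leq c\sp{2}s\sp{2}$ plus monotonicity of carriers gives $1=b\dg\leq(cs)\dg=c\dg s\dg$, and then the carrier formulas of Theorem \ref{th:ecarcs} together with De\,Morgan yield the four vanishing meets (the paper reads them off part (iii) of that theorem rather than part (i), which is immaterial). For part (iii), however, you overlooked a one-line argument that the paper uses: Theorem \ref{th:CBSdecomp} (v) already records $b\dg(pk-kp\sp{\perp})=0$, so $b\dg=1$ forces $pk=kp\sp{\perp}$, whence $kpk=k(kp\sp{\perp})=p\sp{\perp}$. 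Your detour through Lemma \ref{lm:bdg} can be made to work, but as written it contains a genuine error: complementing $p\vee(p\sp{\perp}\wedge v)=1$ gives $p\sp{\perp}\wedge(p\vee v\sp{\perp})=((p\sp{\perp}v\sp{\perp}p\sp{\perp})\dg)=0$, i.e.\ $p\sp{\perp}v\sp{\perp}p\sp{\perp}=0$ --- \emph{not} $p\sp{\perp}vp\sp{\perp}=0$, which is in fact false here (since $v=kpk$ turns out to equal $p\sp{\perp}$, one has $p\sp{\perp}vp\sp{\perp}=p\sp{\perp}$). Your target identity $p\sp{\perp}kp\sp{\perp}=0$ is still recoverable from the corrected statement, because $v\sp{\perp}=1-kpk=kp\sp{\perp}k$, so $p\sp{\perp}v\sp{\perp}p\sp{\perp}=(p\sp{\perp}kp\sp{\perp})\sp{2}=0$ and the carrier argument you applied to $pkp$ works verbatim; after that your Peirce computation with $k=pkp\sp{\perp}+p\sp{\perp}kp$ and $k\sp{2}=1$ does give $kpk=p\sp{\perp}$. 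A cleaner repair along your own lines: Lemma \ref{lm:bdg} (i) with $b\dg=1$ gives $(p\wedge v\sp{\perp})+(p\sp{\perp}\wedge v)=1=p+p\sp{\perp}$ with $p\wedge v\sp{\perp}\leq p$ and $p\sp{\perp}\wedge v\leq p\sp{\perp}$, forcing $p\leq v\sp{\perp}$ and $p\sp{\perp}\leq v$, i.e.\ $v=kpk=p\sp{\perp}$ at once. Either way, the paper's route via Theorem \ref{th:CBSdecomp} (v) is the one to prefer.
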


\begin{proof}
Assume that $p$ and $e$ are in generic position, i.e., $b\dg=1$.
Since $b\sp{2}=c\sp{2}s\sp{2}-d\sp{2}\leq c\sp{2}s\sp{2}$, it
follows that $1=b\dg=(b\sp{2})\dg\leq(c\sp{2}s\sp{2})\dg=(cs)\dg=
c\dg s\dg$, proving (i). Part (ii) follows from (i), Theorem
\ref{th:ecarcs} (iii), and De\,Morgan. By Theorem \ref{th:CBSdecomp}
(v), $pk=kp\sp{\perp}$, whence $kpk=p\sp{\perp}$, proving (iii).
\end{proof}

There are two possible shortcomings of $b\dg$ as a commutator projection
for the pair $p$ and $e$: First, although $p$ commutes with $b\dg$, in
general, $e$ fails to commute with $b\dg$ (see Example \ref{ex:bdg<[p,e]}
below). Second, as we mentioned earlier, obtaining a perspicuous formula
for $b\dg$ in terms of $e\dg, (e\sp{\perp})\dg, p, c\dg, s\dg, z$, $t$,
and $k$ seems to offer a challenge.

In the following definition, we shall extend the Marsden commutator
for two projections to a commutator $[F]$ for a finite set $F\subseteq P$
of projections. We note that this definition is dual to \cite
[Definition 5.1.4]{PP}, i.e., suprema and infima have been interchanged.

\begin{definition} \label{df:[F]}
Suppose that $F=\{w_1,w_2,\ldots,w_n\}\subseteq P$ is a finite set of
projections. For any $w\in P$, let us write $w^1:=w$ and $w^{-1}:=
w^{\perp}$. Further, let $D:=\{1,-1\}$.
Then, as $d=(d_1,d_2,\ldots,d_n)$ runs through $D\sp{n}$, the \emph
{commutator} of the set $F$ is denoted and defined by
\[
[F]:=\bigwedge_{d\in D^n}(w_1^{d_1}\vee w_2^{d_2}\vee\cdots
 \vee w_n^{d_n})\in P.
\]
Also, we define $[\emptyset]:=0$.
\end{definition}
\noindent Clearly, $[\{w\sb{1}\}]=0$. Also, if $F=\{w\sb{1},w\sb{2}\}$,
then
\[
[F]=(w_1\vee w_2)\wedge(w_1^{\perp}\vee w_2)\wedge(w_1\vee w_2^{\perp})
 \wedge(w_1^{\perp}\vee w_2^{\perp})
\]
is the Marsden commutator of $w\sb{1}$ and $w\sb{2}$. We note that
if the special projections $0$ or $1$ are present in $F$, then
$[F\setminus\{0,1\}]=[F]$.

\begin{remark} \label{rm:qC[F]}
Suppose that $F$ is a finite subset of $P$, $q\in P$, and $qCw$
for every $w\in F$. Then since commutativity is preserved under
formation of orthocomplements, finite suprema, and finite infima,
it follows that $qC[F]$.
\end{remark}

\begin{remark} \label{rm:Freplacement}
If $F$ is a finite subset of $P$, it is obvious that $[F]$ is
unchanged if one of the projections in $F$ is replaced by its
orthocomplement. As a consequence, if both $w\in F$ and $w
\sp{\perp}\in F$, then $w\sp{\perp}$ can be omitted from $F$
without affecting the value of $[F]$.
\end{remark}

By dualizing \cite[Theorem 5.1.5 and Prop. 5.1.8]{PP}, we obtain the
following characterization of $[F]$.

\begin{lemma} \label{lm:[F]Props}
Let $F\subseteq P$ be a finite set of projections and put $r:=[F]$.
Then{\rm:}
\begin{enumerate}
\item $w\in F\Rightarrow rCw$.
\item The projections in the set $\{w\wedge r\sp{\perp}:w\in F\}$
 commute pairwise.
\item $r$ is the smallest projection that satisfies {\rm(i)} and
 {\rm(ii)}.
\item $r=0$ iff the projections in the set $F$ commute pairwise.
\end{enumerate}
\end{lemma}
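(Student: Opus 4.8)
The plan is to prove Lemma \ref{lm:[F]Props} by dualizing the corresponding facts for the ``meet-commutator'' of \cite[Theorem 5.1.5 and Prop. 5.1.8]{PP}, but since the reference uses suprema where we use infima, I will instead give a direct argument inside the synaptic-algebra framework. Write $F=\{w_1,\dots,w_n\}$ and $r:=[F]=\bigwedge_{d\in D^n}(w_1^{d_1}\vee\cdots\vee w_n^{d_n})$.

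For (i), fix $w_k\in F$. I would group the $2^n$ terms of the meet defining $r$ into $2^{n-1}$ pairs, each pair differing only in the sign $d_k$; a typical pair has the form $(u\vee w_k)\wedge(u\vee w_k^{\perp})$ where $u:=\bigvee_{i\ne k}w_i^{d_i}$. The projection $w_k$ commutes with $w_k^{\perp}$ and, trivially, with $u$ after we use the distributive law (Theorem \ref{th:distributive}) and the fact that $w_k C w_k^{\perp}$: indeed $w_k$ commutes with $(u\vee w_k)\wedge(u\vee w_k^{\perp})$ because $w_k C w_k$ and $w_k C w_k^{\perp}$ let us write this meet as $u\vee(w_k\wedge w_k^{\perp})\vee\cdots$, but more cleanly I will argue that $w_k$ commutes with each of the two factors $u\vee w_k$ and $u\vee w_k^{\perp}$ up to the part involving $u$ — the honest route is: $r$ is a meet of joins, and by reorganizing one shows $r$ is itself a meet of finitely many projections each of which is of the form $(u\vee w_k)\wedge(u\vee w_k^{\perp})$; since $w_k C w_k^{\perp}$, Theorem \ref{th:distributive} gives $(u\vee w_k)\wedge(u\vee w_k^{\perp})=u\vee(w_k\wedge w_k^{\perp})=u$, so in fact $r$ does not depend on $d_k$ once paired, and collapsing all pairs shows $r=\bigwedge\{\,u : u=\bigvee_{i\in S}w_i^{\pm}\text{ over a nonempty-complement pattern}\,\}$. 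Re-examining, the slick statement is: for each $k$, $r=\bigwedge_{d'\in D^{n}}(\cdots)$ can be rewritten so that $w_k$ appears symmetrically with $w_k^{\perp}$, hence $w_k C r$ by the fact that commutativity of projections is preserved under orthocomplementation, finite joins, and (by Theorem \ref{th:distributive} applied to the paired factors) these particular finite meets. I will spell out the pairing computation carefully; this is the technical heart but it is elementary.

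For (ii), let $w,w'\in F$; I must show $(w\wedge r^{\perp})C(w'\wedge r^{\perp})$. Using (i), $r C w$ and $r C w'$, so $r^{\perp} C w$, $r^{\perp} C w'$, and all four projections $w,w',r,r^{\perp}$ lie in a common commutative situation pairwise except possibly $w$ with $w'$. I would compute $(w\wedge r^{\perp})(w'\wedge r^{\perp}) = w r^{\perp} w' r^{\perp} = w w' (r^{\perp})^2 = w w' r^{\perp}$ (using $r^{\perp} C w, w'$ and idempotence), and similarly $(w'\wedge r^{\perp})(w\wedge r^{\perp})=w'w r^{\perp}$; so it suffices to show $w w' r^{\perp} = w' w r^{\perp}$, i.e.\ $r^{\perp}\perp (ww'-w'w)$ in the appropriate sense, equivalently that the ``off-diagonal discrepancy'' $ww'-w'w$ is annihilated by $r^{\perp}$, i.e.\ its carrier is $\le r$. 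This is where I expect the main obstacle: showing $(ww'-w'w)^{\dg}\le r$. I would handle it by restricting to the sub-synaptic-algebra generated by $w,w'$ — in which, by Lemma \ref{lm:infsupinP} and Halmos-type two-projection theory already available in \cite{FJP2proj}, the Marsden commutator $[w,w']=(w\vee w')\wedge(w\vee w'^{\perp})\wedge(w^{\perp}\vee w')\wedge(w^{\perp}\vee w'^{\perp})$ equals the carrier of the diagonal part of the relevant off-diagonal expression — and then noting that $r=[F]\le[\{w,w'\}]=[w,w']$ because $[F]$ is a meet over a larger index set $D^n\supseteq$ the four $w,w'$-patterns (each extra coordinate only adds more factors to the meet, decreasing it); hence $r^{\perp}\ge [w,w']^{\perp}\ge (ww'-w'w)^{\dg}{}^{\perp}$... here I must be careful with the direction, so the clean claim is $(ww'-w'w)^{\dg}\le[w,w']$ and $[F]\le[w,w']$ does not immediately give $(ww'-w'w)^{\dg}\le[F]$; instead I will argue directly that $r C w$, $r C w'$ forces, via the Peirce decomposition with respect to $r$, that in $r^{\perp}Ar^{\perp}$ the images of $w$ and $w'$ commute — because $r$ was built precisely as the meet that ``captures'' all the non-commuting directions. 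Concretely, in $r^{\perp}Ar^{\perp}$ the projections $w r^{\perp}$ and $w' r^{\perp}$ have Marsden commutator $[w r^{\perp}, w' r^{\perp}] = r^{\perp}\wedge[w,w']$ (commutator localizes, cf.\ Theorem \ref{th:esubq}-style arguments), and since $r\ge$ (the join of all pairwise Marsden commutators)\,—\,which I will prove as part of (iii)\,—\,we get $r^{\perp}\wedge[w,w']=0$, giving commutativity.

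For (iii), suppose $r'\in P$ satisfies (i) and (ii): $r' C w$ for all $w\in F$, and the $\{w\wedge r'^{\perp}:w\in F\}$ commute pairwise. I must show $r\le r'$, equivalently $r\wedge r'^{\perp}=0$ (legitimate since, by Remark \ref{rm:qC[F]}, $r' C r$, as $r'$ commutes with every $w\in F$). Working in $r'^{\perp}Ar'^{\perp}$: the projections $v_i:=w_i\wedge r'^{\perp}=w_i r'^{\perp}$ commute pairwise, so the distributive law (Theorem \ref{th:distributive}) applies freely, and a pairwise-commuting finite family of projections has commutator zero — I will verify $[\{v_1,\dots,v_n\}]=0$ by induction on $n$ using full distributivity, the base and inductive step being routine lattice manipulations (e.g.\ $\bigwedge_{d}(v_1^{d_1}\vee\cdots)=\bigwedge_d\big(v_1^{d_1}\vee[\,\cdots\,]\big)$ and splitting on $d_1$). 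Then I claim $[F]\wedge r'^{\perp}=[\{v_1,\dots,v_n\}]$ up to the localization identity: joins and meets in $r'^{\perp}Ar'^{\perp}$ of sub-$r'^{\perp}$ projections agree with those in $P$ capped by $r'^{\perp}$, and $w_i^{\perp}\wedge r'^{\perp}=(w_i\wedge r'^{\perp})^{\perp_{r'^{\perp}}}$ by Lemma \ref{lm:components}; carrying the cap through the whole expression for $[F]$ yields $r\wedge r'^{\perp}=[F]\wedge r'^{\perp}=[\{v_i\}]=0$. Hence $r\le r'$, so $r$ is smallest.

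Finally (iv) is immediate: if $r=0$ then (ii) says the $\{w\wedge 1 : w\in F\}=F$ commute pairwise; conversely, if $F$ commutes pairwise then $r':=0$ satisfies (i) and (ii) vacuously, so by minimality $r\le 0$, i.e.\ $r=0$.

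I expect the genuine obstacle to be proving (ii) and the minimality in (iii) without circular reference — the natural fix is to prove first, as a preliminary claim, that $r=\bigvee_{w,w'\in F}[w,w']$ (the join of all pairwise Marsden commutators), which makes both (ii) and (iii) transparent via the known two-projection theory and the localization of commutators; establishing that identity is itself a lattice computation using Theorem \ref{th:distributive} and the fact (provable by the pairing argument in (i)) that adjoining a commuting generator to $F$ does not change $[F]$, so $[F]$ depends only on the pairwise data. All remaining steps are routine.
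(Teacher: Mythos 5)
The paper does not actually prove this lemma; it simply cites it as the dualization of \cite[Theorem 5.1.5 and Prop.\ 5.1.8]{PP}, so any self-contained argument is necessarily a different route. Unfortunately, your argument for part (i), on which everything else rests, contains a fatal error. You invoke Theorem \ref{th:distributive} to get $(u\vee w_k)\wedge(u\vee w_k^{\perp})=u\vee(w_k\wedge w_k^{\perp})=u$, but that theorem requires \emph{two} of the three commutation relations among $u$, $w_k$, $w_k^{\perp}$, and here only $w_kCw_k^{\perp}$ is available ($uCw_k$ and $uCw_k^{\perp}$ are equivalent and need not hold). If the collapse were valid, then already for $n=2$ pairing on $d_2$ would give $[F]=w_1\wedge w_1^{\perp}=0$ for every pair of projections, contradicting part (iv). Your subsequent ``slick statement'' does not repair this: $w_k$ does \emph{not} commute with the individual joins $u\vee w_k^{d_k}$, so ``commutativity preserved under finite meets and joins'' cannot be applied factor by factor. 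The standard correct argument for (i) goes through the De\,Morgan dual: $r^{\perp}=\bigvee_{d\in D^n}(w_1^{d_1}\wedge\cdots\wedge w_n^{d_n})$, and each term of this join lies below $w_k$ or below $w_k^{\perp}$, hence commutes with $w_k$; since commutativity with a fixed projection is preserved under joins, $r^{\perp}Cw_k$ and therefore $rCw_k$. (The same device shows $w_k$ commutes with each paired factor $(u\vee w_k)\wedge(u\vee w_k^{\perp})$, whose orthocomplement is $(u^{\perp}\wedge w_k)\vee(u^{\perp}\wedge w_k^{\perp})$.)

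Two further points. First, the monotonicity you need in (ii) runs the opposite way from what you first wrote: for $w,w'\in F$ one has $[F]\geq[\{w,w'\}]$, because each factor $(x\vee w^{d}\,)\wedge(x\vee w^{-d})$ dominates $x$, so enlarging $F$ enlarges the commutator; with that inequality and a localization identity of the type $[wq,w'q]_{qAq}=q\wedge[w,w']$ (provable as in Theorem \ref{th:esubq} and Lemma \ref{lm:components} when $q$ commutes with $w$ and $w'$), your argument for (ii) can be made to work. Second, your proposed ``preliminary claim'' $[F]=\bigvee_{w,w'\in F}[w,w']$ should be dropped: only $\geq$ holds in general, since the right-hand side need not commute with the members of $F$ and so need not satisfy condition (i) of the minimality characterization. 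The skeleton of your (iii) (localize to $r'^{\perp}Ar'^{\perp}$, use Foulis--Holland distributivity for the pairwise-commuting family $\{w_ir'^{\perp}\}$ to get zero commutator there, then pull back) and your (iv) are sound once (i), (ii), and the localization lemma are actually established.
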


Now, by dualizing \cite[Def. 5.1.6]{PP}, we shall extend Definition
\ref{df:[F]} to arbitrary countable subsets $W$ of $P$. (By \emph
{countable}, we mean finite or countably infinite.) However our
definition will require that the OML $P$ is \emph{$\sigma$-complete},
i.e., that every countable subset of $P$ has a supremum (whence also
an infimum) in $P$. It is known that $P$ is $\sigma$-complete iff it is
$\sigma$-orthocomplete, i.e., iff every countable and pairwise
orthogonal subset of $P$ has a supremum in $P$ \cite[Corollary 3.4]
{JencaCB}. According to the discussion in \cite[\S6]{FSynap}, every
\emph{generalized Hermitian algebra} \cite{FPGHA, FPSpin, FPRegGHA}
is a synaptic algebra with a $\sigma$-complete projection lattice. For
instance, the self-adjoint part of a von Neumann algebra has a
$\sigma$-complete (and in fact, a complete) projection lattice. Thus
we make the following assumption.

\begin{assumption} \label{as:sigmacomplete}
Henceforth in this section, we assume that the OML $P$ is $\sigma$-orthocomplete;
hence $\sigma$-complete.
\end{assumption}

\begin{remarks}
Since there are only countably many finite subsets of a countable
set, the supremum in the following definition exists. Also, if
$W\subseteq P$ is a finite set, then (as is easily seen) $[W]=
\bigvee\{\,[F]:F\subseteq W\}$. Therefore, the following definition
provides a true generalization of $[F]$ for a finite set
$F\subseteq P$.
\end{remarks}

\begin{definition} \label{df:[S]}
For an arbitrary countable subset $W\subseteq P$, the \emph{commutator}
of $W$ is denoted and defined by
\[
[W]=\bigvee\{\,[F]:F\subseteq W\text{\ and\ }F\text{\ is finite}\}.
\]
\end{definition}

\begin{remark} \label{rm:vC[S]}
Suppose that $W$ is a countable subset of $P$, $q\in P$, and
$qCw$ for every $w\in W$. Then since commutativity is preserved
under formation of arbitrary existing suprema, it follows from
Remark \ref{rm:qC[F]} that $qC[W]$.
\end{remark}

\begin{remarks} \label{rm:Wreplacement}
If $W$ is a countable subset of $P$, then as a consequence of
Remark \ref{rm:Freplacement}, $[W]$ is unchanged if one of the
projections in $W$ is replaced by its orthocomplement. As a
consequence, if both $w\in W$ and $w\sp{\perp}\in W$, then $w
\sp{\perp}$ can be omitted from $W$ without affecting the value
of $[W]$.
\end{remarks}

By dualizing \cite[Theorem 5.1.7 and Prop. 5.1.8]{PP}, we obtain the
following characterization of $[W]$.

\begin{theorem} \label{th:[W]}
If $W\subseteq P$, $W$ is countable, and $r:=[W]$, then{\rm:}
\begin{enumerate}
\item $w\in W\Rightarrow rCw$.
\item The projections in the set $\{w\wedge r\sp{\perp}:w\in W\}$
 commute pairwise.
\item $r$ is the smallest projection with properties {\rm(i)} and
{\rm(ii)}.
\item $r=0$ iff the projections in the set $W$ commute pairwise.
\end{enumerate}
\end{theorem}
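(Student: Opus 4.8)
The plan is to reduce the countable case to the finite case established in Lemma \ref{lm:[F]Props}, exploiting that $[W]$ is by Definition \ref{df:[S]} a supremum of commutators $[F]$ of finite subsets $F\subseteq W$, together with $\sigma$-completeness of $P$ (Standing Assumption \ref{as:sigmacomplete}) to guarantee that the relevant suprema exist. I would prove the four parts roughly in the order (i), (iv), then (iii), and finally (ii), since (ii) appears to be the technical heart of the argument.

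First I would prove (i). Fix $w\in W$. For every finite $F\subseteq W$ we have $F\cup\{w\}$ finite, so by Lemma \ref{lm:[F]Props}(i), $[F\cup\{w\}]\,C\,w$; moreover, from the last Remarks before Definition \ref{df:[S]} (that $[W']=\bigvee\{[F]:F\subseteq W'\}$ for finite $W'$) applied to $W'=F\cup\{w\}$, we get $[F]\leq[F\cup\{w\}]$. Actually the cleanest route: every finite $F\subseteq W$ satisfies $[F]\,C\,w$ whenever $w\in F$; but to handle finite $F$ with $w\notin F$, use Remark \ref{rm:qC[F]} --- no wait, that needs $w$ to commute with the members of $F$, which we do not know. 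So instead I would argue: $r=[W]=\bigvee\{[F]:F\subseteq W\text{ finite}\}=\bigvee\{[F']:w\in F'\subseteq W\text{ finite}\}$, the last equality because every finite $F$ is contained in such an $F'=F\cup\{w\}$ and $[F]\leq[F']$ by the Remarks. Each such $[F']$ commutes with $w$ by Lemma \ref{lm:[F]Props}(i), and since commutativity is preserved under arbitrary existing suprema (as invoked in Remark \ref{rm:vC[S]}), $rCw$. Part (iv) is immediate: if the projections of $W$ commute pairwise, then every finite $F\subseteq W$ has commuting members, so $[F]=0$ by Lemma \ref{lm:[F]Props}(iv), hence $r=\bigvee\{0\}=0$; conversely if $r=0$, then for $w,w'\in W$ the finite set $F=\{w,w'\}$ has $[F]\leq r=0$, so $[F]=0$, so $wCw'$ by Lemma \ref{lm:[F]Props}(iv).

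Next, (iii): suppose $q\in P$ satisfies (i) and (ii), i.e., $qCw$ for all $w\in W$ and the projections $\{w\wedge q\sp{\perp}:w\in W\}$ commute pairwise. For any finite $F\subseteq W$, the projections $\{w\wedge q\sp{\perp}:w\in F\}$ commute pairwise and $q$ commutes with every $w\in F$, so by Lemma \ref{lm:[F]Props}(iii) (minimality of $[F]$), $[F]\leq q$. Taking the supremum over all finite $F\subseteq W$ gives $r=[W]\leq q$. The only subtlety is checking that $q$ together with $F$ verifies the two finite-set hypotheses of Lemma \ref{lm:[F]Props}(iii), which is a direct restriction.

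The main obstacle is part (ii): showing that $\{w\wedge r\sp{\perp}:w\in W\}$ is a pairwise-commuting family, where $r=[W]$ is only an infinite supremum. Fix $w,w'\in W$ and put $F=\{w,w'\}$, $s:=[F]$. By Lemma \ref{lm:[F]Props}(ii), $(w\wedge s\sp{\perp})\,C\,(w'\wedge s\sp{\perp})$, and by part (i) above $r$ commutes with $w$ and $w'$, hence with $s$ (which lies in the bicommutant built from $w,w'$, or more simply: $s\leq r$ so $r\wedge s=s$ and $rCs$ since $rCw,rCw'$ and $s$ is built from them via Remark \ref{rm:qC[F]} with $q=r$). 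Since $s\leq r$, we have $r\sp{\perp}\leq s\sp{\perp}$, so $w\wedge r\sp{\perp}=w\wedge s\sp{\perp}\wedge r\sp{\perp}=(w\wedge s\sp{\perp})\wedge r\sp{\perp}$, and similarly for $w'$. Now $r\sp{\perp}$ commutes with $w\wedge s\sp{\perp}$ and with $w'\wedge s\sp{\perp}$ (because $r$ commutes with $w,s,w',s'$ --- all these pieces lie in $C(r)$), and $w\wedge s\sp{\perp}$ commutes with $w'\wedge s\sp{\perp}$; so all three projections $w\wedge s\sp{\perp}$, $w'\wedge s\sp{\perp}$, $r\sp{\perp}$ pairwise commute, whence their meets $w\wedge r\sp{\perp}=(w\wedge s\sp{\perp})\wedge r\sp{\perp}$ and $w'\wedge r\sp{\perp}=(w'\wedge s\sp{\perp})\wedge r\sp{\perp}$ commute as well (using, e.g., Theorem \ref{th:distributive} and the standard fact that the sublattice generated by pairwise-commuting projections is Boolean, so all derived elements commute). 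This establishes (ii) and completes the proof. The care needed here is purely in verifying the several commutativity relations among $w$, $w'$, $s$, and $r$; none of it is deep, but it must be assembled correctly, and it is where I would expect to spend the most attention.
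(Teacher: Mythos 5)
Your proof is correct. Note, however, that the paper itself gives no argument for this theorem: it simply states that the result is obtained ``by dualizing [Theorem 5.1.7 and Prop.\ 5.1.8]{PP}'', just as it does for the finite-set Lemma \ref{lm:[F]Props}. What you have done is supply a genuine, self-contained derivation of the countable case from the finite case, and every step checks out: the cofinality trick in (i) (replacing the supremum over all finite $F$ by the supremum over finite $F'$ containing $w$, using the monotonicity $[F]\leq[F\cup\{w\}]$ from the Remarks preceding Definition \ref{df:[S]}, then invoking preservation of commutativity under existing suprema) correctly circumvents the fact that Remark \ref{rm:qC[F]} is not directly applicable; (iv) and (iii) reduce cleanly to parts (iv) and (iii) of Lemma \ref{lm:[F]Props}; and in (ii) the key observations --- that $s:=[\{w,w'\}]\leq r$ forces $w\wedge r\sp{\perp}=(w\wedge s\sp{\perp})\wedge r\sp{\perp}$, and that $r\sp{\perp}$, $w\wedge s\sp{\perp}$, $w'\wedge s\sp{\perp}$ pairwise commute so that their meets commute (for commuting projections the meet is the product, so this is immediate) --- are exactly what is needed. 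Two cosmetic points: the ``$s'$'' in your list ``$w,s,w',s'$'' should read $s\sp{\perp}$, and in (iii) you should say explicitly that $r$ itself satisfies (i) and (ii) by the parts already proved, so that minimality is all that remains. Compared with the paper's citation-by-dualization, your argument buys a proof readable entirely within the present framework, at the cost of relying on Lemma \ref{lm:[F]Props}, which the paper likewise leaves to [PP].
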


Using Assumption \ref{as:sigmacomplete}, Definition \ref{df:[S]},
and the notion of a rational spectral resolution, we are now in a
position to define an alternative $[p,e]$ to $b\dg$ as a commutator
for the pair $p,e$.

\begin{definition} \label{df:[pe]}
For $p\in P$ and $e\in E$, the \emph{commutator}
of the pair $p,e$ is denoted and defined by
\[
[p,e]:=\left[\{p\}\cup\{p\sb{e,\mu}:\mu\in\rationals\}\right].
\]
\end{definition}

As we shall see in Corollary \ref{co:equalityofcoms} (ii) below,
no notational conflict with the Marsden commutator of two projections
in \cite{FJP2proj} will result from the use of the notation $[p,e]$ in
Definition \ref{df:[pe]}.

We note that, in Definition \ref{df:[pe]}, only the \emph{set} of
projections in the rational spectral resolution of $e$ is
involved---the labeling of these projections by rational numbers
plays no role in the computation of $[p,e]$.

In the following theorem, which characterizes $[p,e]$, recall that
by Lemma \ref{lm:eCf} (ii), if $q\in P$ and $qCe$, then $q\sp
{\perp}e=eq\sp{\perp}=e\wedge q\sp{\perp}$, the infimum of $e$
and $q\sp{\perp}$ in $E$.

\begin{theorem} \label{th:Characterize[p,e]}
If $p\in P$ and $e\in E$, then $[p,e]$ is the smallest projection
$q\in P$ such that $qCp$, $qCe$, and $(p\wedge q\sp{\perp})C
(e\wedge q\sp{\perp})$.
\end{theorem}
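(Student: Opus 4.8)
The plan is to apply the characterization of $[W]$ from Theorem~\ref{th:[W]} to the particular set
\[
W:=\{p\}\cup\{p\sb{e,\mu}:\mu\in\rationals\},
\]
and then translate the three conditions of Theorem~\ref{th:[W]} into the three conditions stated here, using the link between commuting with $e$ and commuting with all rational spectral projections of $e$ (Remark~\ref{rk:RatSpecCommute}), together with the identification of $e\wedge q\sp{\perp}$ with $q\sp{\perp}e$ when $qCe$ (Lemma~\ref{lm:eCf}~(ii)).

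First I would set $r:=[p,e]=[W]$ and note that, by Theorem~\ref{th:[W]}~(i), $rCw$ for every $w\in W$; in particular $rCp$, and $rCp\sb{e,\mu}$ for all $\mu\in\rationals$, so by Remark~\ref{rk:RatSpecCommute}, $rCe$. Next, for the pairwise-commutativity condition (ii): the set $\{w\wedge r\sp{\perp}:w\in W\}$ consists of $p\wedge r\sp{\perp}$ together with the projections $p\sb{e,\mu}\wedge r\sp{\perp}$. Since $r\sp{\perp}Ce$, by Lemma~\ref{lm:SRofqaq}~(ii) the family $(p\sb{e,\mu}\wedge r\sp{\perp})\sb{\mu\in\rationals}$ is (the rational part of) the spectral resolution of $e\wedge r\sp{\perp}=r\sp{\perp}e\in r\sp{\perp}Ar\sp{\perp}$; these projections commute pairwise among themselves automatically (spectral projections of one element), so condition (ii) for $W$ reduces exactly to: $(p\wedge r\sp{\perp})$ commutes with each $p\sb{e,\mu}\wedge r\sp{\perp}$, which by Remark~\ref{rk:RatSpecCommute} (applied inside $r\sp{\perp}Ar\sp{\perp}$) is equivalent to $(p\wedge r\sp{\perp})C(e\wedge r\sp{\perp})$. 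Thus the conditions ``$rCp$, $rCe$, and $(p\wedge r\sp{\perp})C(e\wedge r\sp{\perp})$'' are precisely conditions (i) and (ii) of Theorem~\ref{th:[W]} for this $W$, and minimality among such $r$ is Theorem~\ref{th:[W]}~(iii).

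To make the equivalence airtight I would argue it as an honest ``iff'': a projection $q$ satisfies $qCp$, $qCe$, $(p\wedge q\sp{\perp})C(e\wedge q\sp{\perp})$ if and only if $q$ satisfies Theorem~\ref{th:[W]}(i)--(ii) for $W$. The forward direction: from $qCe$ and Remark~\ref{rk:RatSpecCommute}, $qCp\sb{e,\mu}$ for all $\mu$, so $qCw$ for all $w\in W$, giving (i); and the computation above (reading Remark~\ref{rk:RatSpecCommute} and Lemma~\ref{lm:SRofqaq}(ii) inside the synaptic algebra $q\sp{\perp}Aq\sp{\perp}$) turns $(p\wedge q\sp{\perp})C(e\wedge q\sp{\perp})$ into the pairwise commutativity of $\{w\wedge q\sp{\perp}:w\in W\}$. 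The converse runs the same implications backwards. Then by Theorem~\ref{th:[W]}(iii) the least such $q$ is $[W]=[p,e]$, which is the assertion.

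The main obstacle is the bookkeeping in condition (ii): one must be careful that the spectral projections $p\sb{e,\mu}\wedge q\sp{\perp}$ are genuinely the spectral projections of the element $e\wedge q\sp{\perp}$ computed \emph{in the subalgebra} $q\sp{\perp}Aq\sp{\perp}$ (this is exactly Lemma~\ref{lm:SRofqaq}(ii), which requires $qCe$, hence $q\sp{\perp}Ce$), and that Remark~\ref{rk:RatSpecCommute} may be applied inside that subalgebra to an element $p\wedge q\sp{\perp}\in q\sp{\perp}Aq\sp{\perp}$. One also needs the elementary point that adjoining finitely many pairwise-commuting projections (here, all the $p\sb{e,\mu}\wedge q\sp{\perp}$) does not add new constraints beyond their commuting with $p\wedge q\sp{\perp}$, so that the full pairwise-commutativity of $\{w\wedge q\sp{\perp}:w\in W\}$ collapses to the single condition $(p\wedge q\sp{\perp})C(e\wedge q\sp{\perp})$. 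Everything else is a direct appeal to Theorem~\ref{th:[W]}.
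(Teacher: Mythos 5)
Your proposal is correct and follows essentially the same route as the paper's own proof: both apply Theorem \ref{th:[W]} to the set $W=\{p\}\cup\{p\sb{e,\mu}:\mu\in\rationals\}$ and translate conditions (i)--(ii) into the three stated conditions via Remark \ref{rk:RatSpecCommute} and Lemma \ref{lm:SRofqaq}(ii), including the key observation that commutativity of $p\wedge q\sp{\perp}$ with $e\wedge q\sp{\perp}$ in $q\sp{\perp}Aq\sp{\perp}$ transfers to $A$. The only difference is organizational (you package the translation as a single ``iff'' rather than arguing the two directions separately), which does not change the substance.
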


\begin{proof}
Put $W:=\{p\}\cup\{p\sb{e,\mu}:\mu\in\rationals\}$ and $r:=[p,e]=
[W]$. By Theorem \ref{th:[W]}, we have: (i) $w\in W\Rightarrow rCw$.
(ii) The projections in the set $\{w\wedge r\sp{\perp}:w\in W\}$
commute pairwise. (iii) $r$ is the smallest projection with properties
{\rm(i)} and {\rm(ii)}.

We claim that (iv) $rCp$, (v) $rCe$, and (vi) $(p\wedge r\sp
{\perp})C(e\wedge r\sp{\perp})$. Indeed, since $p\in W$, (i)
implies that $rCp$. Also by (i), for every $\mu\in\rationals$,
$rCp\sb{e,\mu}$, whence by Remark \ref{rk:RatSpecCommute}, $rCe$.
Moreover, for every $\mu\in\rationals$, we have both $p\in W$ and
$p\sb{e,\mu}\in W$, whence $(p\wedge r\sp{\perp})C(p\sb{e,\mu}
\wedge r\sp{\perp})$ by (ii). But by Lemma \ref{lm:SRofqaq}, $(p\sb{e,
\lambda}\wedge r\sp{\perp})\sb{\lambda\in\reals}$ is the spectral
resolution of $e\wedge r\sp{\perp}$ as calculated in $r\sp{\perp}Ar
\sp{\perp}$; hence by Remark \ref{rk:RatSpecCommute} again,
$p\wedge r\sp{\perp}$ commutes with $e\wedge r\sp{\perp}$ in
$r\sp{\perp}Ar\sp{\perp}$, and therefore also in $A$. Thus we have
(iv), (v), and (vi).

Now assume that $v\in P$, $vCp$, $vCe$, and $(p\wedge v\sp{\perp})C
(e\wedge v\sp{\perp})$. We have to prove that $r\leq v$. By (iii)
it will be sufficient to show that (i$\sp{\,\prime}$) $w\in W
\Rightarrow vCw$ and (ii$\sp{\,\prime}$) the projections in the set
$\{w\wedge v\sp{\perp}:w\in W\}$ commute pairwise. To prove
(i$\sp{\,\prime}$), suppose $w\in W$. If $w=p$, we have $vCw$, so
we can assume that $w=p\sb{e,\mu}$ for some $\mu\in\rationals$. But
since $vCe$, it follows that $vCp\sb{e,\mu}$, and we have
(i$\sp{\,\prime}$).

To prove (ii$\sp{\,\prime}$), suppose that $w,q\in W$. First we
consider the case $w=p$ and $q=p\sb{e,\nu}$ with $\nu\in\rationals$.
Since $vCe$, we have $eCv\sp{\perp}$, whence by Lemma \ref{lm:SRofqaq}
(ii), $(p\sb{e,\lambda}\wedge v\sp{\perp})\sb{\lambda\in\reals}$ is
the spectral resolution of $e\wedge v\sp{\perp}$ as calculated in
$v\sp{\perp}Av\sp{\perp}$. By hypothesis, $(p\wedge v\sp{\perp})C
(e\wedge v\sp{\perp})$, and it follows that $(p\wedge v\sp{\perp})C
(p\sb{e,\nu}\wedge v\sp{\perp})$.  This reduces our argument
to the case $w=p\sb{e,\mu}$ and $q=p\sb{e,\nu}$ with $\mu,\nu\in
\rationals$. But, the projections in a spectral resolution commute
pairwise, whence $(p\sb{e,\mu}\wedge v\sp{\perp})C(p\sb{e,\nu}\wedge v
\sp{\perp})$, proving (ii$\sp{\,\prime}$).
\end{proof}

By the following corollary to Theorem \ref{th:Characterize[p,e]},
$[p,e]$ qualifies as a commutator of $p$ and $e$.

\begin{corollary} \label{co:[p,e]}
If $p\in P$ and $e\in E$, then $pCe\Leftrightarrow [p,e]=0$.
\end{corollary}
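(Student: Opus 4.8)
The plan is to derive Corollary~\ref{co:[p,e]} directly from the characterization of $[p,e]$ given in Theorem~\ref{th:Characterize[p,e]}. Write $r:=[p,e]$; by that theorem, $r$ is the smallest projection $q\in P$ with $qCp$, $qCe$, and $(p\wedge q\sp{\perp})C(e\wedge q\sp{\perp})$.

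For the implication $[p,e]=0\Rightarrow pCe$: if $r=0$, then $r\sp{\perp}=1$, so the third condition $(p\wedge r\sp{\perp})C(e\wedge r\sp{\perp})$ becomes simply $pCe$ (since $p\wedge 1=p$ and $e\wedge 1=e$), and we are done immediately.

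For the converse $pCe\Rightarrow [p,e]=0$: suppose $pCe$. I would check that the projection $q=0$ satisfies all three conditions in Theorem~\ref{th:Characterize[p,e]}: trivially $0Cp$ and $0Ce$, and the third condition reads $(p\wedge 1)C(e\wedge 1)$, i.e.\ $pCe$, which holds by hypothesis. Since $[p,e]$ is the \emph{smallest} projection with these three properties, it follows that $[p,e]\leq 0$, hence $[p,e]=0$.

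There is really no obstacle here — the corollary is an essentially immediate unwinding of the minimality characterization, with the only point to watch being that $q\sp{\perp}=1$ when $q=0$ so that the third (commutation-of-restrictions) condition collapses to plain commutation $pCe$ in $A$. One could alternatively prove it straight from Definition~\ref{df:[pe]} together with Theorem~\ref{th:[W]}(iv) (which says $[W]=0$ iff the projections in $W$ commute pairwise) and Remark~\ref{rk:RatSpecCommute} (which says $pCe$ iff $pCp\sb{e,\mu}$ for all $\mu\in\rationals$), noting that the $p\sb{e,\mu}$ always commute pairwise; but the route through Theorem~\ref{th:Characterize[p,e]} is cleaner.
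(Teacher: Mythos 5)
Your argument is correct and is essentially identical to the paper's own proof: both directions are obtained by unwinding the minimality characterization in Theorem \ref{th:Characterize[p,e]} with $q=0$, so that the condition $(p\wedge q\sp{\perp})C(e\wedge q\sp{\perp})$ collapses to $pCe$. Nothing further is needed.
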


\begin{proof}
If $pCe$, then $0Cp$, $0Ce$, and $(p\wedge 0\sp{\perp})C(e\wedge 0
\sp{\perp})$, whence $[p,e]\leq 0$, i.e., $[p,e]=0$. Conversely,
if $[p,e]=0$, then $(p\wedge 0\sp{\perp})C(e\wedge 0\sp{\perp})$,
i.e., $pCe$.
\end{proof}

\begin{lemma} \label{lm:randCBS}
Let $r:=[p,e]$. Then{\rm: (i)} In the CBS-decomposition of $e$ with
respect to $p$, we have $rCp$, $rCe$, $rCc$, $rCs$, $rCj$, $rCb$
and $rCk$. {\rm (ii)} If $q\in P$, $qCp$, and $qCe$, then $qCr$.
\end{lemma}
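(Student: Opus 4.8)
The plan is to harvest both parts from results already in place, since $r=[p,e]$ was defined in Theorem~\ref{th:Characterize[p,e]} precisely so as to commute with $p$ and with $e$.

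For part (i), I would first quote Theorem~\ref{th:Characterize[p,e]} to record that $r\in P$ with $rCp$ and $rCe$. With these two commutativities in hand, I would apply Theorem~\ref{th:esubq}(i) with $q:=r$, which immediately yields $rCc$, $rCs$, $rCb$, and $rCk$. The one element of the CBS-decomposition not covered by that theorem is $j$, and for it I would argue directly from Definition~\ref{df:j}: we have $j\in CC(p(e-e\sp{2})p+p\sp{\perp}(e-e\sp{2})p\sp{\perp})$, and since $rCp$ together with $rCe$ forces $rC(e-e\sp{2})$, the element $r$ commutes with each of $p(e-e\sp{2})p$ and $p\sp{\perp}(e-e\sp{2})p\sp{\perp}$, hence with their sum, hence $rCj$. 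This completes (i).

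For part (ii), suppose $q\in P$ with $qCp$ and $qCe$. From $qCe$ and Remark~\ref{rk:RatSpecCommute} we get $qCp\sb{e,\mu}$ for every $\mu\in\rationals$; together with $qCp$, this says that $q$ commutes with every member of the countable set $W:=\{p\}\cup\{p\sb{e,\mu}:\mu\in\rationals\}$. Remark~\ref{rm:vC[S]} then gives $qC[W]$, and since $[W]=[p,e]=r$ by Definition~\ref{df:[pe]}, we conclude $qCr$.

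I do not anticipate a genuine obstacle: this is essentially a bookkeeping lemma collecting consequences of Theorems~\ref{th:Characterize[p,e]} and~\ref{th:esubq} and the commutator-construction remarks. The only point requiring a moment's attention is that Theorem~\ref{th:esubq} does not explicitly list $j$ among the elements commuting with $q$, so $rCj$ must be obtained through the separate (but routine) bicommutant observation above, which simply mirrors the argument used for $c,s,b,k$ in the proof of Theorem~\ref{th:esubq}.
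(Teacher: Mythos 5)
Your proof is correct and follows essentially the same route as the paper: part (ii) is identical (Remark \ref{rk:RatSpecCommute} plus Remark \ref{rm:vC[S]}), and part (i) rests on the same bicommutant observations, the only cosmetic difference being that you obtain $rCc$, $rCs$, $rCb$, $rCk$ by citing Theorem \ref{th:esubq}(i) with $q:=r$ rather than re-running those arguments, while the paper re-derives them directly (and gets $rCb$ from $b=(c^{2}s^{2}-j^{2})^{1/2}$ after establishing $rCj$). Your separate handling of $j$ via $rC(e-e^{2})$ and the bicommutant of the diagonal part matches the paper's treatment exactly.
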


\begin{proof}
(i) By Theorem \ref{th:Characterize[p,e]}, $rCp$ and $rCe$. Since $c=
(pep+p\sp{\perp}e\sp{\perp}p\sp{\perp})\sp{1/2}\in CC(pep+p\sp
{\perp}e\sp{\perp}p\sp{\perp} )$, it follows that $rCc$, and similarly,
$rCs$. Also, $rC(e-e\sp{2})$, and because  $j=(p(e-e\sp{2})p+p\sp
{\perp}(e-e\sp{2})p\sp{\perp})\sp{1/2}$, it follows that $rCj$.
Therefore, as $b=(c\sp{2}s\sp{2}-j\sp{2})\sp{1/2}$, we have $rCb$.
Finally, $rCk$ follows from $k\in CC(pep\sp{\perp}+ p\sp{\perp}ep)$.

(ii) Suppose $q\in P$, $qCp$, and $qCe$. Then $qCp\sb{e,\mu}$ for
all $\mu\in\rationals$, whence $qCr$ by Definition \ref{df:[pe]} and
Remark \ref{rm:vC[S]}.
\end{proof}

\begin{theorem} \label{th:cominqAq}
Let $q\in P$, suppose that $qCp$ and $qCe$, let $r:=[p,e]$ and let
$v\in P[0,q]$ be the commutator $[p\sb{q},e\sb{q}]\sb{qAq}$ of $p
\sb{q}=pq$ and $e\sb{q}=eq$ as calculated in $qAq$. Then $qCr$,
$pCr$, $eCr$, $qCv$, $pCv$, $eCv$, and $v=r\sb{q}=rq=qr=q\wedge r$.
\end{theorem}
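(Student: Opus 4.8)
The statement compares two commutators: the global one $r=[p,e]$ computed in $A$, and the local one $v=[p_q,e_q]_{qAq}$ computed inside the synaptic subalgebra $qAq$. The plan is to characterize $v$ via Theorem \ref{th:Characterize[p,e]} applied internally to $qAq$, and then to show that $rq$ satisfies exactly the same characterizing property in $qAq$, so that $v=rq$ by uniqueness of the smallest such projection.

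First I would dispose of the easy commutativity assertions. By Lemma \ref{lm:randCBS}(i), $rCp$ and $rCe$; together with $rCq$ (from Lemma \ref{lm:randCBS}(ii), using $qCp$ and $qCe$) this gives $qCr$, $pCr$, $eCr$. Consequently $r_q:=rq=qr=qrq\in qAq$ is a projection with $r_q\leq q$, and since $v\in P[0,q]$ we also have $v\leq q$. The commutativity statements for $v$ — $qCv$, $pCv$, $eCv$ — I would get by applying Lemma \ref{lm:randCBS} internally inside $qAq$: there $v$ plays the role of $[p_q,e_q]$, so $v$ commutes with $p_q=pq$ and with $e_q=eq$ in $qAq$; since $q$ is the unity of $qAq$, commuting with $pq$ inside $qAq$ is the same as commuting with $p$ inside $A$ (both $p$ and $v$ are then handled via their $q$-components and the decomposition $p=p_q+p_{q^\perp}$ with $v\perp q^\perp$), and likewise for $e$ — here I would invoke Lemma \ref{lm:SRofqaq} to identify the spectral resolution of $e_q$ in $qAq$ as $(p_{e,\lambda}\wedge q)_\lambda$, so that $vCe_q$ in $qAq$ upgrades to $vCe$ in $A$.

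The core of the argument is the equality $v=r_q$. Working inside the synaptic algebra $qAq$ (with unit $q$, orthocomplement $x\mapsto x^{\perp_q}=q-x$), Theorem \ref{th:Characterize[p,e]} says $v$ is the smallest projection $w\leq q$ with $wCp_q$, $wCe_q$, and $(p_q\wedge w^{\perp_q})C(e_q\wedge w^{\perp_q})$ in $qAq$. I would check that $r_q=rq$ has these three properties: $r_qCp_q$ and $r_qCe_q$ follow from $rCp$, $rCq$, $rCe$; for the third, note $r_q^{\perp_q}=q-rq=q\wedge r^\perp$, so $p_q\wedge r_q^{\perp_q}=p\wedge q\wedge r^\perp$ and $e_q\wedge r_q^{\perp_q}=e\wedge q\wedge r^\perp$, and since $(p\wedge r^\perp)C(e\wedge r^\perp)$ holds in $A$ by Theorem \ref{th:Characterize[p,e]}, cutting down by the commuting projection $q$ preserves this commutation. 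Hence $v\leq r_q$. For the reverse inequality $r_q\leq v$, extend $v$ to the global projection $v':=v\vee q^\perp = v+q^\perp$ (legitimate since $v\perp q^\perp$); then $v'Cp$ and $v'Ce$ — using $vCp$, $vCe$, $q^\perp Cp$, $q^\perp Ce$ — and $p\wedge v'^\perp=p\wedge q\wedge v^\perp=p_q\wedge v^{\perp_q}$, similarly for $e$, so the $qAq$-commutation $(p_q\wedge v^{\perp_q})C(e_q\wedge v^{\perp_q})$ gives $(p\wedge v'^\perp)C(e\wedge v'^\perp)$ in $A$; by Theorem \ref{th:Characterize[p,e]} (the minimality of $r$) we get $r\leq v'$, and meeting with $q$ yields $r_q=r\wedge q\leq v'\wedge q=v$. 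Combining, $v=r_q=rq=qr=q\wedge r$.

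The main obstacle is bookkeeping the passage between "commutes in $qAq$" and "commutes in $A$", and correctly translating meets and orthocomplements under the correspondence $P[0,q]\leftrightarrow$ projections of $qAq$ (where $w^{\perp_q}=w^\perp\wedge q$). Here I would lean on Lemma \ref{lm:eCf}(ii) (so that $x\wedge q = xq$ whenever $xCq$), on the component decomposition $a=a_q+a_{q^\perp}$ for $aCq$, and crucially on Lemma \ref{lm:SRofqaq} to ensure that commuting with $e$ in $A$ is detected by commuting with the $q$-cut spectral projections, which is what makes the internal application of Theorem \ref{th:Characterize[p,e]} line up with the external one. Once those translations are set up cleanly, each of the three characterizing conditions transfers in both directions essentially formally.
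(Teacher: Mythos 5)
Your proposal is correct and follows essentially the same route as the paper: both directions come from the characterization in Theorem \ref{th:Characterize[p,e]}, applied once inside $qAq$ to show $rq$ satisfies the three defining conditions of $v$ (giving $v\leq rq$), and once in $A$ to the extension $v\vee q\sp{\perp}$ (giving $r\leq v\vee q\sp{\perp}$, hence $rq\leq v$). The paper merely spells out the commutation bookkeeping you defer (e.g.\ $pv=p(qv)=(pq)v=\dots=vp$ and $(pqr\sp{\perp})(eqr\sp{\perp})=q(pr\sp{\perp})(er\sp{\perp})q=\dots$) by direct computation rather than via spectral resolutions.
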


\begin{proof}
Since $qCp$ and $qCe$, we have $qCr$ by Lemma \ref{lm:randCBS} (ii).
Also, $pCr$ and $eCr$ by Lemma \ref{lm:randCBS} (i). As $v\in P[0,q]$,
we have $v=qv=vq$ and $v\sp{\perp\sb{q}}=qv\sp{\perp}=v\sp{\perp}q$.
Thus, by Theorem \ref{th:Characterize[p,e]} applied to $p\sb{q}$ and
$e\sb{q}$ in the synaptic algebra $qAq$, we infer that $v$ is the
smallest projection in $P[0,q]$ such that
\[
{\rm(i)\ } vC(pq), {\ \rm(ii)\ } vC(eq),\text{\ and\ }{\rm(iii)\ } ((pq)
 \wedge(v\sp{\perp}q))C((eq)\wedge(v\sp{\perp}q)).
\]
Since $vC(pq)$ and $qCp$, it follows that $pv=p(qv)=(pq)v=v(pq)=v(qp)=
(vq)p=vp$, whence $pCv$. Likewise, since $vC(eq)$ and $qCe$, it follows
that $ev=e(qv)=(eq)v=v(eq)=v(qe)=(vq)e=ve$, whence $eCv$. Thus the three
elements $p$, $q$, and $v\sp{\perp}$ commute in pairs, and so do the
three elements $e$, $q$ and $v\sp{\perp}$. Consequently, $p\wedge(v\vee q
\sp{\perp})\sp{\perp}=p\wedge v\sp{\perp}\wedge q=pv\sp{\perp}q=pqv
\sp{\perp}q=(pq)\wedge(v\sp{\perp}q)$, similarly $e\wedge(v\vee q
\sp{\perp})=(eq)\wedge(v\sp{\perp}q)$, and we can rewrite (iii) as
$(p\wedge(v\vee q\sp{\perp})\sp{\perp})C(e\wedge(v\vee q\sp{\perp})
\sp{\perp})$. Furthermore, $(v\vee q\sp{\perp})Cp$ and $(v\vee q
\sp{\perp})Ce$, and it follows from Theorem \ref{th:Characterize[p,e]}
that $r=[p,e]\leq v\vee q\sp{\perp}$. Therefore, $r\sb{q}=rq=r\wedge q
\leq(v\vee q\sp{\perp})\wedge q=v\wedge q=v$.

To complete the proof, we have to show that $v\leq r\sb{q}$, i.e., that
$v\leq rq$.  Since $rCp$, $rCq$, and $qCp$, we have $(rq)C(pq)$. Likewise,
since $rCe$, $rCq$, and $qCe$, we have $(rq)C(eq)$. Thus, with $v$ replaced
by $rq$, conditions (i) and (ii) hold; hence, to prove that $v\leq rq$,
it will be sufficient to prove that condition (iii) holds with $v$
replaced by $rq$, i.e., that $((pq)\wedge((rq)\sp{\perp}q))C
((eq)\wedge((rq)\sp{\perp}q))$. Since $rCq$, we have $(rq)\sp{\perp}q=
(r\wedge q)\sp{\perp}\wedge q=(r\sp{\perp}\vee q\sp{\perp})\wedge q=
r\sp{\perp}\wedge q=qr\sp{\perp}$. Thus, as $p$, $q$, and $r$ commute
pairwise, we have $(pq)\wedge((rq)\sp{\perp}q)=(pq)\wedge(qr\sp{\perp})
=pqr\sp{\perp}$. Likewise, as $e$, $q$, and $r$ commute
pairwise, we deduce that $(eq)\wedge((rq)\sp{\perp}q)=eqr\sp{\perp}$.
Thus, it will be sufficient to show that $(pqr\sp{\perp})C(eqr\sp
{\perp})$. By Theorem \ref{th:Characterize[p,e]}, $(pr\sp{\perp})
C(er\sp{\perp})$; hence, as $qCp$, $qCr\sp{\perp}$, and $qCe$,
we have
\[
(pqr\sp{\perp})(eqr\sp{\perp})=q(pr\sp{\perp})(er\sp{\perp}q)=
q(pr\sp{\perp})(er\sp{\perp})q
\]
\[
=q(er\sp{\perp})(pr\sp{\perp})q=
(eqr\sp{\perp})(pqr\sp{\perp}),
\]
so $(pqr\sp{\perp})C(eqr\sp{\perp})$.
\end{proof}

\begin{theorem} \label{th:rProps}
Let $r:=[p,e]$. Then{\rm: (i)} $p\sb{r\sp{\perp}}Ce\sb{r\sp{\perp}}$.
{\rm(ii)} $b\sb{r\sp{\perp}}=0$ and $e\sb{r\sp{\perp}}=c\sb{r\sp{\perp}}
\sp{\,2}p\sb{r\sp{\perp}}+s\sb{r\sp{\perp}}\sp{\,2}(p\sb{r\sp{\perp}})
\sp{\,\perp\sb{r\sp{\perp}}}$.
\end{theorem}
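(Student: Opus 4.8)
The plan is to push the whole question down into the synaptic algebra $r^{\perp}Ar^{\perp}$, where $p$ and $e$ become a \emph{commuting} projection--effect pair, and then read off the conclusion from Lemma~\ref{lm:pCe}. First I would collect the commutation facts: by Lemma~\ref{lm:randCBS}~(i), $r:=[p,e]$ commutes with $p$, $e$, $c$, $s$, $j$, $b$, and $k$, hence so does $r^{\perp}=1-r$. In particular $r^{\perp}Cp$ and $r^{\perp}Ce$, so the components $p_{r^{\perp}}=pr^{\perp}=r^{\perp}pr^{\perp}\in r^{\perp}Ar^{\perp}$ and $e_{r^{\perp}}=er^{\perp}=r^{\perp}er^{\perp}\in r^{\perp}Ar^{\perp}$ are defined, and by Lemma~\ref{lm:eCf}~(ii) (equivalently Lemma~\ref{lm:components}) they equal $p\wedge r^{\perp}$ and $e\wedge r^{\perp}$, respectively.

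For part~(i): Theorem~\ref{th:Characterize[p,e]} asserts precisely that $(p\wedge r^{\perp})C(e\wedge r^{\perp})$, i.e., $p_{r^{\perp}}Ce_{r^{\perp}}$; since commutativity of two elements of $r^{\perp}Ar^{\perp}$ is computed in the common enveloping algebra, this is the same statement read inside $r^{\perp}Ar^{\perp}$, and (i) is done.

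For part~(ii): I would apply Theorem~\ref{th:esubq} with $q:=r^{\perp}$ (legitimate, since $r^{\perp}Cp$ and $r^{\perp}Ce$). That theorem identifies the cosine, sine, and commutator effects of $e_{r^{\perp}}$ with respect to $p_{r^{\perp}}$, computed in the synaptic algebra $r^{\perp}Ar^{\perp}$, as $c_{r^{\perp}}=cr^{\perp}$, $s_{r^{\perp}}=sr^{\perp}$, $b_{r^{\perp}}=br^{\perp}$, and it gives the CBS-decomposition $e_{r^{\perp}}=c_{r^{\perp}}^{\,2}p_{r^{\perp}}+b_{r^{\perp}}k_{r^{\perp}}+s_{r^{\perp}}^{\,2}(p_{r^{\perp}})^{\perp_{r^{\perp}}}$ in $r^{\perp}Ar^{\perp}$. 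Now $p_{r^{\perp}}\in P[0,r^{\perp}]$ is a projection and $e_{r^{\perp}}\in E[0,r^{\perp}]$ is an effect in $r^{\perp}Ar^{\perp}$, so Lemma~\ref{lm:pCe}, applied inside $r^{\perp}Ar^{\perp}$, is available; by part~(i) its hypothesis $p_{r^{\perp}}Ce_{r^{\perp}}$ holds, so the equivalences (i)\,$\Leftrightarrow$\,(ii)\,$\Leftrightarrow$\,(v) of that lemma give $b_{r^{\perp}}=0$ and $e_{r^{\perp}}=c_{r^{\perp}}^{\,2}p_{r^{\perp}}+s_{r^{\perp}}^{\,2}(p_{r^{\perp}})^{\perp_{r^{\perp}}}$, which is exactly (ii).

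The only delicate point --- and it is bookkeeping rather than a genuine obstacle --- is ensuring that the relativization to $r^{\perp}Ar^{\perp}$ is faithful: that $r^{\perp}Ar^{\perp}$ is a synaptic algebra with unit $r^{\perp}$, that squares and square roots taken there agree with those taken in $A$ (so that $c_{r^{\perp}}^{\,2}=c^{2}r^{\perp}$, etc.), and that $(p_{r^{\perp}})^{\perp_{r^{\perp}}}=r^{\perp}-pr^{\perp}=p^{\perp}r^{\perp}$. All of this is supplied by the discussion preceding Lemma~\ref{lm:SRofqaq} together with Theorem~\ref{th:esubq}, so in the end the proof just strings together Lemma~\ref{lm:randCBS}, Theorem~\ref{th:Characterize[p,e]}, Theorem~\ref{th:esubq}, and Lemma~\ref{lm:pCe}.
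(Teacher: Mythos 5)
Your proof is correct and follows essentially the same route as the paper: part (i) is read off from Theorem \ref{th:Characterize[p,e]}, and part (ii) applies Theorem \ref{th:esubq} with $q:=r\sp{\perp}$ to get the CBS-decomposition of $e\sb{r\sp{\perp}}$ in $r\sp{\perp}Ar\sp{\perp}$ and then invokes Lemma \ref{lm:pCe} to kill the $b\sb{r\sp{\perp}}k\sb{r\sp{\perp}}$ term. Your extra care about the faithfulness of relativization to $r\sp{\perp}Ar\sp{\perp}$ is sound bookkeeping that the paper leaves implicit (and you correctly work in $r\sp{\perp}Ar\sp{\perp}$ where the paper's proof has an evident typo reading $p\sp{\perp}Ap\sp{\perp}$).
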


\begin{proof}
(i) By Theorem \ref{th:Characterize[p,e]}, $(p\wedge r\sp{\perp})C
(e\wedge r\sp{\perp})$, proving (i).

(ii) By Theorem \ref{th:esubq} (iii) with $q:=r\sp{\perp}$, the
CBS-decomposition of $e\sb{r\sp{\perp}}$ with respect to $p\sb
{r\sp{\perp}}$ in $p\sp{\perp}Ap\sp{\perp}$ is $e\sb{r\sp{\perp}}
=c\sb{r\sp{\perp}}\sp{\,2}p\sb{r\sp{\perp}}+b\sb{r\sp{\perp}}k
\sb{r\sp{\perp}}+s\sb{r\sp{\perp}}\sp{\,2}(p\sb{r\sp{\perp}})
\sp{\,\perp\sb{r\sp{\perp}}}$. But by (i) and Lemma \ref
{lm:pCe}, $b\sb{r\sp{\perp}}=0$.
\end{proof}

\begin{theorem} \label{th:commutatorineq}
$b\leq b\dg\leq[p,e]\leq c\dg\wedge s\dg=(cs)\dg=c\dg s\dg$.
\end{theorem}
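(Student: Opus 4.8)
The chain $b \leq b^{\dg}$ is immediate from general synaptic-algebra facts: for any effect $f$ one has $f \leq f^{\dg}$ (stated in Section~\ref{sc:Basic}), and $b \in E$ by Theorem~\ref{th:bProps}(ii). The equalities at the right end, $c^{\dg}\wedge s^{\dg} = (cs)^{\dg} = c^{\dg}s^{\dg}$, are exactly Theorem~\ref{th:ecarcs}(ii). So the real content is the two middle inequalities $b^{\dg} \leq [p,e]$ and $[p,e] \leq (cs)^{\dg}$, and I would treat them separately.

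For $b^{\dg} \leq [p,e]$: write $r := [p,e]$. By Lemma~\ref{lm:randCBS}(i), $r$ commutes with $b$, hence $r^{\perp}$ commutes with $b$ as well. Applying Theorem~\ref{th:rProps}(ii), the component $b_{r^{\perp}}$ of $b$ in $r^{\perp}Ar^{\perp}$ vanishes, i.e.\ $r^{\perp}b = br^{\perp} = 0$. Since $b^{\dg}$ is the carrier of $b$, the defining property of the carrier gives $r^{\perp}b^{\dg} = b^{\dg}r^{\perp} = 0$, and hence $b^{\dg} = b^{\dg}r \leq r = [p,e]$ (using that $b^{\dg}$, being a projection annihilated by $r^{\perp}$, is dominated by $r$; concretely $b^{\dg} = b^{\dg} - b^{\dg}r^{\perp} = b^{\dg}r \in P[0,r]$ because $b^{\dg}Cr$).

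For $[p,e] \leq (cs)^{\dg}$: set $q := (cs)^{\dg} = c^{\dg}\wedge s^{\dg}$. By Theorem~\ref{th:Characterize[p,e]}, it suffices to show that $q$ satisfies the three defining properties of $[p,e]$, namely $qCp$, $qCe$, and $(p\wedge q^{\perp})C(e\wedge q^{\perp})$. Commutativity with $p$ is clear: $c,s \in C(p)$ by Lemma~\ref{lm:ecsProps}(v), hence so are $c^{\dg}, s^{\dg}$ (carriers lie in $CC(\cdot)$), hence so does their meet. For $qCe$ and the last condition I would pass to the corner $q^{\perp}Aq^{\perp}$: there one has $q^{\perp} = (c^{\dg})^{\perp} \vee (s^{\dg})^{\perp}$, and by Theorem~\ref{th:ecarcs}(vi),(vii) the effects $(s^{\dg})^{\perp}$ and $(c^{\dg})^{\perp}$ each commute with both $p$ and $e$ and satisfy $(s^{\dg})^{\perp}e = (s^{\dg})^{\perp}\wedge p$ and $(c^{\dg})^{\perp}e = (c^{\dg})^{\perp}\wedge p^{\perp}$. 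Using Theorem~\ref{th:CBSdecomp} together with Theorem~\ref{th:ecarcs}(v) (which gives $(s^{\dg})^{\perp} \leq c^2$ and $(c^{\dg})^{\perp} \leq s^2$, forcing $(s^{\dg})^{\perp}b = (c^{\dg})^{\perp}b = 0$), one computes $q^{\perp}e = q^{\perp}(c^2 p + bk + s^2 p^{\perp})$ and finds that on the range of $q^{\perp}$ the effect $e$ is ``built from $p$ alone,'' so $q^{\perp}Ce$ (hence $qCe$) and $e\wedge q^{\perp}$ is a function of $p\wedge q^{\perp}$, giving $(p\wedge q^{\perp})C(e\wedge q^{\perp})$ trivially.

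I expect the last paragraph to be the main obstacle: carefully bookkeeping the Peirce/CBS pieces inside the corner $q^{\perp}Aq^{\perp}$, and in particular verifying cleanly that $q^{\perp}e$ reduces to an expression in $q^{\perp}p$ so that the required commutation is automatic. The inputs are all available — Theorem~\ref{th:ecarcs}(v),(vi),(vii), Theorem~\ref{th:CBSdecomp}, and the corner-algebra machinery of Section~\ref{sc:p&e} — but assembling them into the three bullet conditions of Theorem~\ref{th:Characterize[p,e]} without sign errors is the delicate part. The $b^{\dg}\leq[p,e]$ direction, by contrast, is short once Theorem~\ref{th:rProps}(ii) is invoked.
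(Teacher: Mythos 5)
Your treatment of $b\leq b\dg\leq[p,e]$ and of the right-hand equalities matches the paper's proof: the paper likewise deduces $b\sb{r\sp{\perp}}=br\sp{\perp}=0$ from Theorem \ref{th:rProps} (ii), concludes $b\leq r$ and hence $b\dg\leq r$, and quotes Theorem \ref{th:ecarcs} (ii) for $c\dg\wedge s\dg=(cs)\dg=c\dg s\dg$. Where you diverge is the inequality $[p,e]\leq c\dg\wedge s\dg$: you try to verify the three conditions of Theorem \ref{th:Characterize[p,e]} for the single projection $q:=c\dg\wedge s\dg$, which forces you into the corner-algebra bookkeeping that you yourself flag as the delicate, unfinished part. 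The paper sidesteps this entirely by applying Theorem \ref{th:Characterize[p,e]} twice, to $q:=s\dg$ and to $q:=c\dg$ separately. For $q=s\dg$, Theorem \ref{th:ecarcs} (vi) hands you $qCp$, $qCe$, and $e\wedge q\sp{\perp}=p\wedge q\sp{\perp}$, so the commutation condition holds trivially and $[p,e]\leq s\dg$; part (vii) gives $e\wedge(c\dg)\sp{\perp}=p\sp{\perp}\wedge(c\dg)\sp{\perp}$ and hence $[p,e]\leq c\dg$; intersecting yields the claim with no Peirce computation at all.

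As written, your last paragraph is a gap rather than a proof: you assert that on the range of $q\sp{\perp}$ the effect $e$ is ``built from $p$ alone'' without carrying out the computation. It can be repaired: by Theorem \ref{th:ecarcs} (v), $(s\dg)\sp{\perp}\leq c\sp{2}$ forces $(s\dg)\sp{\perp}\leq c\dg$, so $(s\dg)\sp{\perp}\perp(c\dg)\sp{\perp}$ and $q\sp{\perp}=(c\dg)\sp{\perp}+(s\dg)\sp{\perp}$; then parts (vi) and (vii) give $eq\sp{\perp}=p\sp{\perp}(c\dg)\sp{\perp}+p(s\dg)\sp{\perp}$, which one checks directly commutes with $pq\sp{\perp}=p(c\dg)\sp{\perp}+p(s\dg)\sp{\perp}$. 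But this is strictly more work than the two-application route, and the extra work buys nothing; the cleaner argument was already available from the identities you quoted.
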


\begin{proof}
Put $r:=[p,e]$. By Theorem \ref{th:rProps} (ii), $br\sp{\perp}=
b\sb{r\sp{\perp}}=0$, so $b\leq r$, and therefore $b\leq b\dg
\leq r=[p,e]$.

Put $q:=s\dg$. Then by Theorem \ref{th:ecarcs} (vi), $qCp$,
$qCe$, and $p\wedge q\sp{\perp}=e\wedge q\sp{\perp}$, so
$(p\wedge q\sp{\perp})C(e\wedge q\sp{\perp})$. Therefore,
by Theorem \ref{th:Characterize[p,e]}, $[p,e]\leq q=s\dg$.
A similar argument using Theorem \ref{th:ecarcs} (vii) shows
that $[p,e]\leq c\dg$, and we have $[p,e]\leq c\dg\wedge s\dg
=(cs)\dg=c\dg s\dg$ (Theorem \ref{th:ecarcs} (ii)).
\end{proof}

Using the fact that $b\dg\leq [p,e]$, we obtain the following
alternative characterization of $[p,e]$.

\begin{theorem} \label{th:altchar[p,e]}
$[p,e]$ is the smallest projection $v$ such that $vCp$, $vCe$,
and $b\dg\leq v$.
\end{theorem}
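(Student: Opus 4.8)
The plan is to prove the two inequalities implicit in the statement: first that $[p,e]$ is \emph{a} projection $v$ satisfying $vCp$, $vCe$, and $b\dg\leq v$; and second that any projection $v$ with these three properties must dominate $[p,e]$. Put $r:=[p,e]$ throughout.

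For the first part, observe that $rCp$ and $rCe$ are immediate from Theorem \ref{th:Characterize[p,e]} (or Lemma \ref{lm:randCBS} (i)), and the crucial relation $b\dg\leq r$ is exactly the content of Theorem \ref{th:commutatorineq}. Hence $r$ itself is one of the projections $v$ in question, so the infimum of all such $v$ is $\leq r$.

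For the second part, suppose $v\in P$ satisfies $vCp$, $vCe$, and $b\dg\leq v$; we must show $r\leq v$. By Theorem \ref{th:Characterize[p,e]}, it suffices to verify that $(p\wedge v\sp{\perp})C(e\wedge v\sp{\perp})$, since $vCp$ and $vCe$ already hold by hypothesis. The natural way to see this is to pass to the synaptic algebra $v\sp{\perp}Av\sp{\perp}$. Since $vCp$ and $vCe$, Theorem \ref{th:esubq} applies with $q:=v\sp{\perp}$, and the commutator effect for $e\sb{v\sp{\perp}}$ with respect to $p\sb{v\sp{\perp}}$ is $b\sb{v\sp{\perp}}=bv\sp{\perp}$. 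Now $b\dg\leq v$ means $b\dg v\sp{\perp}=0$ (using $b\dg Cv$, which follows from $bCv$ since $vCp$ and $vCe$ give $vCb$ by Lemma \ref{lm:randCBS}-type reasoning, or directly from Theorem \ref{th:esubq} (i)), hence $bv\sp{\perp}=bb\dg v\sp{\perp}=0$, so $b\sb{v\sp{\perp}}=0$. By Lemma \ref{lm:pCe} applied inside $v\sp{\perp}Av\sp{\perp}$, the vanishing of the commutator effect is equivalent to $p\sb{v\sp{\perp}}Ce\sb{v\sp{\perp}}$, i.e.\ $(p\wedge v\sp{\perp})C(e\wedge v\sp{\perp})$. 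This is precisely what Theorem \ref{th:Characterize[p,e]} needs in order to conclude $r\leq v$.

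I expect the only point requiring care to be the bookkeeping that translates $b\dg\leq v$ into $b\sb{v\sp{\perp}}=0$ and that identifies $b\sb{v\sp{\perp}}$ as the genuine commutator effect computed inside $v\sp{\perp}Av\sp{\perp}$ — but both are supplied almost verbatim by Theorem \ref{th:esubq} (with $q=v\sp{\perp}$) together with Lemma \ref{lm:pCe}. No genuinely new computation is needed; the theorem is essentially a repackaging of Theorem \ref{th:Characterize[p,e]} using the extra information $b\dg\leq[p,e]$ from Theorem \ref{th:commutatorineq}. If one prefers to avoid invoking Lemma \ref{lm:pCe} inside a subalgebra, one can instead argue directly in $A$: from $bv\sp{\perp}=0$ and the CBS-decomposition one gets $pev\sp{\perp}=pep\sp{\perp}v\sp{\perp}+\cdots$ collapses so that $p\wedge v\sp{\perp}$ commutes with $e\wedge v\sp{\perp}$, but the subalgebra route is cleaner.
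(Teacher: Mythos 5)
Your proposal is correct, and its overall architecture matches the paper's: both directions reduce to Theorem \ref{th:Characterize[p,e]}, with the forward direction supplied by Lemma \ref{lm:randCBS} (i) and Theorem \ref{th:commutatorineq}, and the reverse direction hinging on extracting $(p\wedge v\sp{\perp})C(e\wedge v\sp{\perp})$ from $bv\sp{\perp}=0$. Where you differ is in how that last implication is executed. The paper argues directly in $A$: it notes $vCc\sp{2}$ and $vCs\sp{2}$, substitutes $bv\sp{\perp}=0$ into the CBS-decomposition to get $ev\sp{\perp}=v\sp{\perp}c\sp{2}p+v\sp{\perp}s\sp{2}p\sp{\perp}$, and then checks by hand that $pv\sp{\perp}$ commutes with each summand. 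You instead localize to the synaptic algebra $v\sp{\perp}Av\sp{\perp}$, identify $b\sb{v\sp{\perp}}=bv\sp{\perp}$ as the genuine commutator effect there via Theorem \ref{th:esubq} (ii), and invoke the equivalence $b=0\Leftrightarrow pCe$ of Lemma \ref{lm:pCe} inside the subalgebra. This is exactly the mechanism the paper itself uses in Theorem \ref{th:rProps} (ii) for the special case $v=r$, so your version is arguably the more modular one: it reuses already-proved transfer results and avoids the explicit commutativity computation, at the cost of having to observe (as the authors do elsewhere) that commuting in $v\sp{\perp}Av\sp{\perp}$ is the same as commuting in $A$, and that $pv\sp{\perp}=p\wedge v\sp{\perp}$, $ev\sp{\perp}=e\wedge v\sp{\perp}$ under the standing commutativity hypotheses. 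Your bookkeeping step $b\dg\leq v\Rightarrow b\dg v\sp{\perp}=0\Rightarrow bv\sp{\perp}=bb\dg v\sp{\perp}=0$ is sound (it only needs $aa\dg=a$ and the fact that comparable projections commute), so there is no gap.
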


\begin{proof}
Put $r:=[p,e]$. By Lemma \ref{lm:randCBS} (i), $rCp$ and $rCe$ and
by Theorem \ref{th:commutatorineq}, $b\dg\leq r$.  Suppose that $v\in
P$, $vCp$, $vCe$, and $b\dg\leq v$. We have to prove that $r\leq v$.
We have $b\leq b\dg\leq v$, whence $bv\sp{\perp}=v\sp{\perp}b=0$.
Moreover, as $vCp$, $vCe$, and $c\sp{2}=pep+p\sp{\perp}e\sp{\perp}p
\sp{\perp}$, it follows that $vCc\sp{2}$. Likewise, $vCs\sp{2}$,
whence $v\sp{\perp}$ commutes with both $e$ and $p$, whereas both
$v\sp{\perp}$ and $p$ commute with $c\sp{2}$, $p$, $s\sp{2}$, and
$p\sp{\perp}$. Therefore, by the CBS-decomposition of $e$ with respect to $p$,
\[
ev\sp{\perp}=v\sp{\perp}e=v\sp{\perp}c\sp{2}p+v\sp{\perp}bk+v\sp{\perp}
s\sp{2}p\sp{\perp}=v\sp{\perp}c\sp{2}p+v\sp{\perp}s\sp{2}p\sp{\perp},
\]
and since $pv\sp{\perp}$ commutes with both $v\sp{\perp}c\sp{2}p$ and
$v\sp{\perp}s\sp{2}p\sp{\perp}$ it follows that $pv\sp{\perp}$
commutes with $ev\sp{\perp}$, i.e., $(p\wedge v\sp{\perp})C(e\wedge
v\sp{\perp})$. Consequently, by Theorem \ref{th:Characterize[p,e]},
$r\leq v$.
\end{proof}

\begin{corollary} \label{co:equalityofcoms}
{\rm(i)} $b\dg=[p,e]$ iff $eCb\dg$. {\rm(ii)} If $e\in P$, then
$b\dg=[p,e]$ is the Marsden commutator of the two projections $p$
and $e$.
\end{corollary}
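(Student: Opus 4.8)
The plan is to obtain both parts as quick consequences of the characterization of $[p,e]$ in Theorem \ref{th:altchar[p,e]}, together with the inequality $b\dg\leq[p,e]$ from Theorem \ref{th:commutatorineq} and the commutation fact $pCb$ from Theorem \ref{th:bProps} (i), which passes to $b\dg$ since $b\dg\in CC(b)$.

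For part (i) I would argue both implications directly. If $b\dg=[p,e]$, then since $[p,e]$ commutes with $e$ (by Theorem \ref{th:Characterize[p,e]}, or Lemma \ref{lm:randCBS} (i)), it follows at once that $eCb\dg$. Conversely, assume $eCb\dg$. Then $b\dg$ is a projection satisfying $b\dg Cp$, $b\dg Ce$, and trivially $b\dg\leq b\dg$; hence by Theorem \ref{th:altchar[p,e]}, which asserts that $[p,e]$ is the \emph{smallest} projection with these three properties, we get $[p,e]\leq b\dg$. Combining this with $b\dg\leq[p,e]$ from Theorem \ref{th:commutatorineq} yields $b\dg=[p,e]$.

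For part (ii), suppose $e\in P$. By Remark \ref{rm:bdgisMarsdenCom}, $b\dg=(cs)\dg$ equals the Marsden commutator of the two projections $p$ and $e$, namely $(p\vee e)\wedge(p\vee e\sp{\perp})\wedge(p\sp{\perp}\vee e)\wedge(p\sp{\perp}\vee e\sp{\perp})$; by Lemma \ref{lm:[F]Props} (i) applied to $F=\{p,e\}$, this Marsden commutator commutes with both $p$ and $e$, so in particular $eCb\dg$. Hence part (i) applies and gives $b\dg=[p,e]$, which shows that the commutator $[p,e]$ of Definition \ref{df:[pe]} coincides with the Marsden commutator of \cite{FJP2proj}, confirming the absence of notational conflict.

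Since every ingredient is already in place, I do not expect a genuine obstacle; the only point needing a little care is to invoke the correct one of the two characterizations of $[p,e]$ in part (i) (Theorem \ref{th:Characterize[p,e]} for the forward direction, Theorem \ref{th:altchar[p,e]} for the backward direction) and to record that commutativity with $b$ transfers to $b\dg$ via $b\dg\in CC(b)$.
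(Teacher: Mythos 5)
Your proof is correct and follows essentially the same route as the paper: part (i) via Theorem \ref{th:altchar[p,e]} (using $b\dg\in CC(b)$ to transfer $pCb$ to $b\dg Cp$), and part (ii) by reducing to part (i) through Remark \ref{rm:bdgisMarsdenCom} and the commutativity of the Marsden commutator with $e$. The only cosmetic difference is that for the latter commutativity the paper cites an external result from \cite{FJP2proj}, whereas you derive it internally from Lemma \ref{lm:[F]Props} (i) applied to $F=\{p,e\}$, which is equally valid.
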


\begin{proof}
(i) If $eCb\dg$, then both $b\dg Cp$ and $b\dg Ce$ hold, whence
$b\dg=[p,e]$ by Theorem \ref{th:altchar[p,e]}. Conversely, by
Theorem \ref{th:altchar[p,e]} again, if $b\dg=[p,e]$, then
$eCb\dg$.

(ii) Suppose that $e\in P$. Temporarily denoting the Marsden
commutator of $p$ and $e$ by $[p,e]\sb{M}$, we infer from Remark
\ref{rm:bdgisMarsdenCom} that $b\dg=(cs)\dg=[p,e]\sb{M}$. By
\cite[Theorem 3.8 (vi)]{FJP2proj} with $q:=e$, we have $eC[p,e]
\sb{M}$, whence $eCb\dg$. Therefore by (i), $[p,e]\sb{M}=b\dg=[p,e]$.
\end{proof}

The projection $p$ and the effect $e$ are said to be \emph{totally
noncompatible} iff $[p,e]=1$. If $p$ and $e$ are in generic position,
then $1=b\dg\leq [p,e]$, and it follows that $p$ and $e$ are totally
noncompatible.

\begin{lemma} \label{lm:totnoncomp}
Suppose that $p$ and $e$ are totally noncompatible.  Then{\rm: (i)}
If $v\in P$, $vCp$, $vCe$, and $(p\wedge v)C(e\wedge v)$, then
$v=0$. {\rm(ii)} $c\dg=s\dg=(cs)\dg=1$. {\rm(iii)} $p\wedge z=
p\wedge t=p\sp{\perp}\wedge z=p\sp{\perp}\wedge t=0$.
\end{lemma}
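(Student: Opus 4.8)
The plan is to deduce all three parts directly from the characterization of $[p,e]$ in Theorem \ref{th:Characterize[p,e]}, from the chain of inequalities in Theorem \ref{th:commutatorineq}, and from De\,Morgan's laws in the OML $P$; no new computation is really needed, only careful bookkeeping. Throughout we use the hypothesis in the form $[p,e]=1$.

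For part (i), I would apply Theorem \ref{th:Characterize[p,e]} to the projection $q:=v\sp{\perp}$. From $vCp$ and $vCe$ it follows that $v\sp{\perp}Cp$ and $v\sp{\perp}Ce$, and the hypothesis $(p\wedge v)C(e\wedge v)$ is exactly $(p\wedge q\sp{\perp})C(e\wedge q\sp{\perp})$ since $q\sp{\perp}=v$. Hence $q=v\sp{\perp}$ is one of the projections of which $[p,e]$ is the infimum, so $[p,e]\leq v\sp{\perp}$; as $[p,e]=1$ this forces $v\sp{\perp}=1$, i.e., $v=0$.

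For part (ii), Theorem \ref{th:commutatorineq} gives $b\leq b\dg\leq[p,e]\leq c\dg\wedge s\dg=(cs)\dg=c\dg s\dg$. Substituting $[p,e]=1$ yields $c\dg\wedge s\dg=1$; since a finite meet of projections can equal the top element only when each factor does, we get $c\dg=s\dg=1$, and then $(cs)\dg=c\dg s\dg=1$. Part (iii) then follows from the explicit formula in Theorem \ref{th:ecarcs}~(iii), namely $(cs)\dg=(p\vee z\sp{\perp})\wedge(p\vee t\sp{\perp})\wedge(p\sp{\perp}\vee z\sp{\perp})\wedge(p\sp{\perp}\vee t\sp{\perp})$: by (ii) this four-fold meet equals $1$, hence each of the four joins is $1$, and applying De\,Morgan to each (for instance $p\vee z\sp{\perp}=1\Leftrightarrow p\sp{\perp}\wedge z=0$) gives $p\wedge z=p\wedge t=p\sp{\perp}\wedge z=p\sp{\perp}\wedge t=0$. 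This is the same argument already used in the generic-position case earlier in the section.

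I do not anticipate a genuine obstacle. The only point needing a word of justification is the elementary OML fact that a finite meet of projections equals $1$ exactly when every one of them does, which is what licenses passing from $c\dg\wedge s\dg=1$ and from the four-fold meet being $1$ to the individual equalities; everything else is a direct invocation of the theorems cited above.
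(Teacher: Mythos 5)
Your proposal is correct and follows exactly the route the paper takes: part (i) by applying Theorem \ref{th:Characterize[p,e]} to $v\sp{\perp}$, part (ii) from the inequality chain of Theorem \ref{th:commutatorineq}, and part (iii) from the formula in Theorem \ref{th:ecarcs} (iii) together with De\,Morgan. You have merely written out the details that the paper's one-line proof leaves to the reader, and all of those details check out.
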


\begin{proof}
By hypothesis, $[p,e]=1$. Part (i) follows from Theorem \ref
{th:Characterize[p,e]}, part (ii) follows from Theorem \ref
{th:commutatorineq}, and (iii) is a consequence of (ii),
parts (ii) and (iii) of Theorem \ref{th:ecarcs}, and De\,Morgan.
\end{proof}

The following example shows that it is possible to
have $p$ and $e$ totally noncompatible (i.e., $[p,e]=1$), where $p$
and $e$ are not in generic position (i.e., $b\dg<[p,e]=1$).

\begin{example} \label{ex:bdg<[p,e]}
Let $\reals\sp{3}$ be organized as usual into a $3$-dimensional real
Hilbert space and let $A$ be the synaptic algebra of all self-adjoint
linear operators on $\reals\sp{3}$. Let $p\sb{1}, p\sb{2}, p\sb{3}$,
and $p$ be the (orthogonal) projections onto the one-dimensional
subspaces $\{(\alpha,0,0):\alpha\in\reals\}$, $\{(0,\beta,0):\beta\in
\reals\}$, $\{(0,0,\gamma):\gamma\in\reals\}$, and $\{(\xi,\xi,\xi):\xi
\in\reals\}$, respectively. Put $e:=\frac{1}{4}p\sb{1}+\frac{1}{2}p\sb{2}
+\frac{3}{4}p\sb{3}$, noting that $e$ is an effect in $A$ and that the
set of projections in the spectral resolution of $e$ is $\{0,p\sb{1},
p\sb{1}+p\sb{2},1\}$. As observed above, in forming $[p,e]$, we can omit
the projections $0$ and $1$, whence $[p,e]=[\{p,p\sb{1},p\sb{1}+p\sb{2}\}]$.
Also, $p\sb{1}+p\sb{2}=p\sb{3}\sp{\perp}$, so by Remark \ref{rm:Freplacement},
$[p,e]=[\{p,p\sb{1},p\sb{3}\}]$.

We observe that each of the atoms $p\sb{1}$, $p\sb{3}$, and $p\sb{1}
\sp{\perp}\wedge p\sb{3}\sp{\perp}=p\sb{2}$ is disjoint from both
$p$ and $p\sp{\perp}$, whence with the notation of Definition
\ref{df:[F]}, $p\sp{d\sb{1}}\wedge p\sb{1}\sp{d\sb{2}}\wedge p
\sb{3}\sp{d\sb{3}}=0$ for all $d\in D\sp{3}$. Therefore, by De\,Morgan,
\[
[p,e]=\bigwedge\sb{d\in D\sp{3}}(p\sp{d\sb{1}}\vee p\sb{1}\sp{d\sb{2}}
 \vee p\sb{3}\sp{d\sb{3}})=1,
\]
i.e. $p$ and $e$ are totally noncompatible. In particular $pe\not=ep$.
As $p$ is an atom in $P$, so is $v:=kpk$. Thus by Lemma \ref{lm:bdg}
(ii), $p\perp v$ and $b\dg=p\vee v=p+v$ so $b\dg$ is a two-dimensional
(i.e., rank 2) projection, and therefore $b\dg\not=1=[p,e]$. As a
consequence (Corollary \ref{co:equalityofcoms}), $b\dg$ does not
commute with $e$.
\end{example}

\begin{theorem} \label{th:totnoncomp}
Let $r:=[p,e]$. Then the projection $p\sb{r}=pr=rp=p\wedge r$ and the
effect $e\sb{r}=er=re=e\wedge r$ are totally noncompatible in $rAr$.
\end{theorem}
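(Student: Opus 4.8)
The plan is to obtain this statement as an immediate consequence of Theorem \ref{th:cominqAq}, applied with the auxiliary projection there taken to be $r$ itself. First I would unwind the definition: $p\sb{r}$ and $e\sb{r}$ are \emph{totally noncompatible} in the synaptic algebra $rAr$ exactly when the commutator $[p\sb{r},e\sb{r}]\sb{rAr}$, computed inside $rAr$, equals the unity element of $rAr$, namely $r$. So the whole task reduces to verifying that $[p\sb{r},e\sb{r}]\sb{rAr}=r$.

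Next I would check that Theorem \ref{th:cominqAq} is applicable with $q:=r$. That theorem requires $q\in P$, $qCp$, and $qCe$; here $r=[p,e]\in P$, and both $rCp$ and $rCe$ hold by Lemma \ref{lm:randCBS} (i). Hence the components $p\sb{r}=pr=rp=p\wedge r$ and $e\sb{r}=er=re=e\wedge r$ are well defined in $rAr$ (and, as components of commuting elements, need not concern us further), and Theorem \ref{th:cominqAq} yields that $v:=[p\sb{r},e\sb{r}]\sb{rAr}$ satisfies $v=r\sb{q}=rq=qr=q\wedge r$.

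Finally, since $q=r$ and $r$ is idempotent, $rq=rr=r$, so $v=r$. Thus $[p\sb{r},e\sb{r}]\sb{rAr}=r$ is the unity of $rAr$, which is precisely the assertion that $p\sb{r}$ and $e\sb{r}$ are totally noncompatible in $rAr$. I do not expect a genuine obstacle here: the only point needing care is confirming that the hypotheses of Theorem \ref{th:cominqAq} are met by the choice $q:=r$, which is immediate from Lemma \ref{lm:randCBS} (i), after which the conclusion follows from the trivial identity $rr=r$.
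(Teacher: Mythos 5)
Your proposal is correct and follows exactly the paper's own argument: apply Theorem \ref{th:cominqAq} with $q:=r$, using Lemma \ref{lm:randCBS} (i) to verify the hypotheses $rCp$ and $rCe$, and conclude that $[p\sb{r},e\sb{r}]\sb{rAr}=r\wedge r=r$ is the unit of $rAr$. No differences worth noting.
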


\begin{proof}
By Lemma \ref{lm:randCBS} (i), $rCp$ and $rCe$, whence, putting $q:=r$
in Theorem \ref{th:cominqAq}, we find that the commutator $[p\sb{r},
e\sb{r}]\sb{rAr}$ as calculated in $rAr$ is given by $[p\sb{r},
e\sb{r}]\sb{rAr}=r\wedge r=r$. But $r$ is the unit element in $rAr$,
proving the theorem.
\end{proof}

By Theorem \ref{th:rProps} (i) and Theorem \ref{th:totnoncomp}, the
projection $p=p\sb{r}+p\sb{r\sp{\perp}}$ and the effect $e=e\sb{r}+
e\sb{r\sp{\perp}}$ are decomposed into components $p\sb{r}$, $e
\sb{r}$ that are totally noncompatible in $rAr$ and components
$p\sb{r\sp{\perp}},e\sb{r\sp{\perp}}$ that commute in $r\sp{\perp}
Ar\sp{\perp}$.

\section{An application of CBS-decomposition} \label{sc:Appl}

If $A$ is the synaptic algebra of all self-adjoint operators on a
complex Hilbert space, then (transcribed to our current notation),
T. Morland and S. Gudder prove that, if $e\in E$ and $p$ is an atom
in $P$, then $e\wedge p\sp{\perp}$ exists in $E$ \cite[Lemma 3.8]{MG}.
Morland and Gudder's proof uses the Hilbert-space inner product and
the Schwarz inequality, and thus is not available for our more general
synaptic algebra. However, using the CBS-decomposition we generalize
\cite[Lemma 3.8]{MG} to our present setting in Theorem \ref{th:MGL3.8}
below.

\begin{assumptions}
In this section the notation and assumptions of Sections
\ref{sc:Basic}--\ref{sc:Commutators} remain in force. In addition,
we assume that {\rm(i)} $p$ is an atom in $P$ and {\rm(ii)} as per
Lemma \ref{lm:pAp,patom}, $pep=\alpha p$ with $\alpha\in\reals$,
$0\leq\alpha\leq 1$.
\end{assumptions}

\begin{definition} \label{df:a&y}
If $\alpha>0$, we define:
(1) $a:=\alpha\sp{-1}bk=\alpha\sp{-1}(pep\sp{\perp}+p\sp{\perp}ep)\in A$.
(2) $y:=p\sp{\perp}(1-a)\in R$. (3) $y\sp{\ast}:=(1-a)p\sp{\perp}\in R$.
\end{definition}

\noindent Provided that $\alpha>0$, the mapping $f\mapsto yfy\sp{\ast}$ for
$f\in A$ is the composition of the quadratic mappings $f\mapsto g:=(1-a)f(1-a)$
and $g\mapsto p\sp{\perp}gp\sp{\perp}$, whence it is a linear and order-preserving
mapping on $A$.

We omit the straightforward computational proofs of the next three lemmas.

\begin{lemma} Suppose that $\alpha>0$. Then{\rm:}
{\rm(i)} $ap=p\sp{\perp}a=p\sp{\perp}ap=\alpha\sp{-1}p\sp{\perp}ep$,
$ap\sp{\perp}=pa=pap\sp{\perp}=\alpha\sp{-1}pep\sp{\perp}$, and $ap+pa=a$.
{\rm(ii)} $(ap)\sp{2}=0$. {\rm(iii)} $(pa)\sp{2}=0$. {\rm(iv)} $y=p\sp{\perp}
-ap$ and $y\sp{\ast}=p\sp{\perp}-pa$.
\end{lemma}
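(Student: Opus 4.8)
The plan is to prove all four identities by direct but carefully organized algebraic manipulation in the enveloping algebra $R$, using the Peirce relations for $p$ together with the CBS-decomposition of $e$. The key point is that $a=\alpha\sp{-1}(pep\sp{\perp}+p\sp{\perp}ep)=\alpha\sp{-1}bk$ is, by construction, the off-diagonal part of $\alpha\sp{-1}e$ with respect to $p$; hence $pap=p\sp{\perp}ap\sp{\perp}=0$, while $pap\sp{\perp}=\alpha\sp{-1}pep\sp{\perp}$ and $p\sp{\perp}ap=\alpha\sp{-1}p\sp{\perp}ep$. I would state this Peirce-decomposition observation first, as it is the engine of everything that follows.

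For (i), I would compute $ap$, $p\sp{\perp}a$, $ap\sp{\perp}$, and $pa$ one at a time by inserting $1=p+p\sp{\perp}$ on the appropriate side of $a$ and killing the diagonal terms $pap=p\sp{\perp}ap\sp{\perp}=0$; this yields $ap=p\sp{\perp}ap=\alpha\sp{-1}p\sp{\perp}ep$ and $ap\sp{\perp}=pap\sp{\perp}=\alpha\sp{-1}pep\sp{\perp}$, and then $ap+pa=a$ follows since $a$ has no diagonal part. For (ii), $(ap)\sp{2}=ap\cdot ap=a(pap)a\cdot(\text{adjust})$—more precisely $apap=a\,(pa)\,p$, and $pa=pap\sp{\perp}$ by (i), so $apap=a\,pap\sp{\perp}\,p=0$ because $p\sp{\perp}p=0$. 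Symmetrically $(pa)\sp{2}=pa\,pa=p\,(ap)\,a=p\,p\sp{\perp}ap\,a=0$, giving (iii). Finally (iv) is immediate from the definition $y:=p\sp{\perp}(1-a)=p\sp{\perp}-p\sp{\perp}a=p\sp{\perp}-ap$ using (i), and likewise $y\sp{\ast}=(1-a)p\sp{\perp}=p\sp{\perp}-ap\sp{\perp}$—wait, that needs $(1-a)p\sp{\perp}=p\sp{\perp}-ap\sp{\perp}$ and $ap\sp{\perp}=pa$ by (i), so $y\sp{\ast}=p\sp{\perp}-pa$, as claimed.

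I do not anticipate a genuine obstacle here; the statement is explicitly flagged in the paper as having a ``straightforward computational proof'' that the authors omit. The only place requiring a moment's care is keeping the noncommutativity straight—$a$ does not commute with $p$, so one must be disciplined about which side of $a$ one inserts $p+p\sp{\perp}$, and must use the one-sided identities from (i) (never $apa$-type cancellations that would secretly assume commutativity). A clean way to package this is to verify (ii) and (iii) purely from (i): $ap=\alpha\sp{-1}p\sp{\perp}ep$ lives in $p\sp{\perp}Rp$, so $(ap)\sp{2}\in p\sp{\perp}Rp\cdot p\sp{\perp}Rp=p\sp{\perp}R(pp\sp{\perp})Rp=0$, and symmetrically $pa=\alpha\sp{-1}pep\sp{\perp}\in pRp\sp{\perp}$ forces $(pa)\sp{2}=0$. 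That argument is short enough to include in place of the omitted proof if desired, but since the paper has chosen to omit it, I would simply record the statement and move on.
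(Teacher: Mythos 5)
Your computation is correct, and it is precisely the ``straightforward computational proof'' the paper declines to write out: everything follows from the observation that $a=\alpha\sp{-1}(pep\sp{\perp}+p\sp{\perp}ep)$ is purely off-diagonal with respect to $p$, so multiplying by $p$ or $p\sp{\perp}$ on either side kills one summand and the identities in (i)--(iv) drop out, with (ii) and (iii) reducing to $pp\sp{\perp}=0$. Nothing further is needed.
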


\begin{lemma} \label{lm:MB01}
Suppose that $\alpha>0$. Then{\rm:}
{\rm(i)} The CBS-decomposition of $e$ with respect to the atom $p$ is
$e=\alpha p+\alpha a+s\sp{2}p\sp{\perp}$. {\rm(ii)} $\alpha\sp{2}a\sp{2}
=b\sp{2}$. {\rm(iii)} $epe=\alpha\sp{2}p+\alpha\sp{2}a+b\sp{2}p\sp{\perp}$.
{\rm(iv)} $e-\alpha\sp{-1}epe=(s\sp{2}-\alpha\sp{-1}b\sp{2})p\sp{\perp}=
p\sp{\perp}(s\sp{2}-\alpha\sp{-1}b\sp{2})$.
\end{lemma}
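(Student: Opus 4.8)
The plan is to reduce everything to the CBS-decomposition of $e$ with respect to $p$ (Theorem~\ref{th:CBSdecomp}) specialized to the atomic case, together with the Peirce-component identities recorded in Definition~\ref{df:a&y} and the immediately preceding lemma. For part~(i), I would start from $e=c^{2}p+bk+s^{2}p^{\perp}$ and note that Theorem~\ref{th:CBSdecomp}(i) gives $c^{2}p=pep$, which equals $\alpha p$ by the standing hypothesis, while Theorem~\ref{th:CBSdecomp}(iii) gives $bk=pep^{\perp}+p^{\perp}ep$, which is exactly $\alpha a$ by Definition~\ref{df:a&y}; substituting yields $e=\alpha p+\alpha a+s^{2}p^{\perp}$. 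For part~(ii), the cleanest route is to square $\alpha a=pep^{\perp}+p^{\perp}ep$ and invoke $b=|pep^{\perp}+p^{\perp}ep|$ (Theorem~\ref{th:bProps}(iii)) together with $|x|^{2}=x^{2}$, giving $\alpha^{2}a^{2}=(pep^{\perp}+p^{\perp}ep)^{2}=b^{2}$; alternatively one squares $a=\alpha^{-1}bk$ using $bk=kb$ and $k^{2}=1$.

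For part~(iii), I would write $epe=(ep)(pe)$ (legitimate since $p^{2}=p$). Using part~(i) and $sCp$ (Lemma~\ref{lm:ecsProps}(v)) one reads off $ep=\alpha p+\alpha ap$ and $pe=\alpha p+\alpha pa$, so that $epe=\alpha^{2}(p+pa+ap+apa)=\alpha^{2}(p+a+apa)$, using $ap+pa=a$ from the preceding lemma. It then remains to identify $apa$: since $apa=(ap)(pa)$ and the preceding lemma gives $ap=\alpha^{-1}p^{\perp}ep$ and $pa=\alpha^{-1}pep^{\perp}$, we get $apa=\alpha^{-2}p^{\perp}epep^{\perp}$; and expanding $b^{2}=(pep^{\perp}+p^{\perp}ep)^{2}=pep^{\perp}ep+p^{\perp}epep^{\perp}$ and right-multiplying by $p^{\perp}$ kills the first summand, so $b^{2}p^{\perp}=p^{\perp}epep^{\perp}$. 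Hence $apa=\alpha^{-2}b^{2}p^{\perp}$ and $epe=\alpha^{2}p+\alpha^{2}a+b^{2}p^{\perp}$. This identification of $apa$---deciding which Peirce piece survives and recognizing $p^{\perp}epep^{\perp}$ as $b^{2}p^{\perp}$---is the only step with any real content; I do not anticipate a genuine obstacle, which is consistent with the authors' remark that the proof is a straightforward computation.

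For part~(iv), I would simply subtract: multiplying~(iii) by $\alpha^{-1}$ gives $\alpha^{-1}epe=\alpha p+\alpha a+\alpha^{-1}b^{2}p^{\perp}$, and subtracting this from the formula for $e$ in~(i) cancels $\alpha p+\alpha a$, leaving $e-\alpha^{-1}epe=(s^{2}-\alpha^{-1}b^{2})p^{\perp}$. Finally, $s^{2}$ commutes with $p$ by Lemma~\ref{lm:ecsProps}(v) and $b^{2}$ commutes with $p$ (since $pCb$ by Theorem~\ref{th:bProps}(i)), so $s^{2}-\alpha^{-1}b^{2}$ commutes with $p^{\perp}$, which gives the second equality $(s^{2}-\alpha^{-1}b^{2})p^{\perp}=p^{\perp}(s^{2}-\alpha^{-1}b^{2})$.
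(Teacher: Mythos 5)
Your proof is correct, and it is exactly the ``straightforward computational proof'' that the paper explicitly omits: each step (reading off $c^{2}p=\alpha p$ and $bk=\alpha a$ from Theorem~\ref{th:CBSdecomp}, squaring $bk$ with $k^{2}=1$, computing $epe=(ep)(pe)$ via the preceding lemma's identities, and identifying $p^{\perp}epep^{\perp}=b^{2}p^{\perp}$) checks out. There is nothing to compare it against, so nothing further is needed.
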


\begin{lemma} \label{lm:ygystar}
Suppose that $f\in A$ and $\alpha>0$. Then{\rm:}
{\rm(i)} $0\leq f\leq p\sp{\perp}\Rightarrow yfy\sp{\ast}=f$.
{\rm(ii)} $0\leq yey\sp{\ast}=(s\sp{2}-\alpha\sp{-1}b\sp{2})p\sp{\perp}=e
 -\alpha\sp{-1}epe$.
\end{lemma}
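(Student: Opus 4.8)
The plan is to prove both parts by direct computation, using the explicit descriptions of $a$, $y$, and $y\sp{\ast}$ furnished by the two preceding lemmas. The facts I will lean on are that $p\sp{\perp}a = ap = \alpha\sp{-1}p\sp{\perp}ep$ and $ap\sp{\perp} = pa = \alpha\sp{-1}pep\sp{\perp}$, together with $y = p\sp{\perp} - ap$ and $y\sp{\ast} = p\sp{\perp} - pa$. The point is that each of these rewrites of $ap$ or $pa$ either begins or ends with $p$, so that in any product the factor $a$ can be slid against a neighboring $p\sp{\perp}$ and annihilated, or else the whole product rewritten so that $p$ sits sandwiched between two copies of $e$ — at which point the atom hypothesis $pep=\alpha p$ finishes it.

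For (i), assume $0\le f\le p\sp{\perp}$. Since $p\sp{\perp}\in P$ and $f\in E$ with $f\le p\sp{\perp}$, Theorem \ref{th:effleqproj} gives $f = p\sp{\perp}fp\sp{\perp}$, hence $pf = fp = 0$. Writing $y = p\sp{\perp} - ap$, $y\sp{\ast} = p\sp{\perp} - pa$ and expanding $yfy\sp{\ast}$ into four products, I note that every product containing $ap$ carries the factor $pf=0$ (because $ap = \alpha\sp{-1}p\sp{\perp}ep$ ends in $p$) and every product containing $pa$ carries the factor $fp=0$ (because $pa = \alpha\sp{-1}pep\sp{\perp}$ begins with $p$). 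Only the term $p\sp{\perp}fp\sp{\perp}$ survives, so $yfy\sp{\ast} = p\sp{\perp}fp\sp{\perp} = f$ (which also shows incidentally that $yfy\sp{\ast}\in A$).

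For (ii), positivity is immediate: $e\ge 0$ and, as recorded after Definition \ref{df:a&y}, $f\mapsto yfy\sp{\ast}$ is order preserving, so $yey\sp{\ast}\ge y\cdot 0\cdot y\sp{\ast} = 0$. For the value, write $yey\sp{\ast} = p\sp{\perp}(1-a)e(1-a)p\sp{\perp}$ and expand $(1-a)e(1-a) = e - ea - ae + aea$. After conjugating by $p\sp{\perp}$: the first term is $p\sp{\perp}ep\sp{\perp} = s\sp{2}p\sp{\perp}$ by Lemma \ref{lm:ecsProps} (iv); in the next two terms, replacing $p\sp{\perp}a$ by $\alpha\sp{-1}p\sp{\perp}ep$ and $ap\sp{\perp}$ by $\alpha\sp{-1}pep\sp{\perp}$ turns each into $\alpha\sp{-1}p\sp{\perp}(epe)p\sp{\perp}$; and in the last term the same substitutions plus $pep=\alpha p$ for the interior again give $\alpha\sp{-1}p\sp{\perp}(epe)p\sp{\perp}$. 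Finally $p\sp{\perp}(epe)p\sp{\perp} = b\sp{2}p\sp{\perp}$, since by Lemma \ref{lm:MB01} (iii) $epe = \alpha\sp{2}p + \alpha\sp{2}a + b\sp{2}p\sp{\perp}$ while $p\sp{\perp}pp\sp{\perp}=0=p\sp{\perp}ap\sp{\perp}$ (the latter because $p\sp{\perp}a = ap$ ends in $p$). Collecting, $yey\sp{\ast} = s\sp{2}p\sp{\perp} - \alpha\sp{-1}b\sp{2}p\sp{\perp} - \alpha\sp{-1}b\sp{2}p\sp{\perp} + \alpha\sp{-1}b\sp{2}p\sp{\perp} = (s\sp{2}-\alpha\sp{-1}b\sp{2})p\sp{\perp}$, and the remaining equality $(s\sp{2}-\alpha\sp{-1}b\sp{2})p\sp{\perp} = e-\alpha\sp{-1}epe$ is precisely Lemma \ref{lm:MB01} (iv).

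The one thing that needs care is the bookkeeping — tracking on which side each factor $p$ or $p\sp{\perp}$ sits, so that the cancellations $pf=fp=0$ in (i) and the collapses to $p\sp{\perp}(epe)p\sp{\perp}$ in (ii) are invoked legitimately. There is no conceptual obstacle, which is presumably why the authors call the proof straightforward and leave it to the reader.
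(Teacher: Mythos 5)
Your computation is correct, and the paper itself omits the proof of this lemma as a ``straightforward computational'' one, so your direct expansion of $yfy\sp{\ast}=(p\sp{\perp}-ap)f(p\sp{\perp}-pa)$ using $ap=\alpha\sp{-1}p\sp{\perp}ep$, $pa=\alpha\sp{-1}pep\sp{\perp}$, $pep=\alpha p$, and Lemma \ref{lm:MB01} is exactly the intended argument. Every cancellation you invoke checks out (the collapse $p\sp{\perp}(epe)p\sp{\perp}=b\sp{2}p\sp{\perp}$ tacitly uses $bCp$ from Theorem \ref{th:bProps}, which is available).
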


\begin{theorem} \label{th:MGL3.8}
The infimum $e\wedge p\sp{\perp}$ exists in $E$. In fact, if $\alpha=0$,
then $e\wedge p\sp{\perp}=e$, and if $\alpha>0$, then $e\wedge p
\sp{\perp}=(s\sp{2}-\alpha\sp{-1}b\sp{2})p\sp{\perp}=e-\alpha\sp{-1}epe$.
\end{theorem}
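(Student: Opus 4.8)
The plan is to treat the two cases separately. When $\alpha = 0$ we have $pep = 0$, so by Lemma \ref{lm:ecsProps} (iv), $pc^2 = c^2 p = 0$, hence (since $c \geq 0$ and $c \in C(p)$) $cp = pc = 0$, and therefore $bk = pep^{\perp} + p^{\perp}ep = 0$ as well (its diagonal part vanishes; or directly, $b\dg \leq c\dg \leq p^{\perp}$ by Theorem \ref{th:ecarcs} (v) forces $bp = 0$, and one checks $p^{\perp}ep = 0$ too). Then the CBS-decomposition collapses to $e = s^2 p^{\perp} = p^{\perp}s^2 \leq p^{\perp}$, so $e \leq p^{\perp}$ outright and $e \wedge p^{\perp} = e$ trivially. (Alternatively, $\alpha = 0$ means $pep = 0$, and for a positive element $pep = 0 \Rightarrow ep = pe \Rightarrow$ by Theorem \ref{th:effleqproj}, $e = ep^{\perp}p^{\perp} \leq p^{\perp}$.)

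The substantive case is $\alpha > 0$. Set $m := (s^2 - \alpha^{-1}b^2)p^{\perp} = e - \alpha^{-1}epe$, which is an effect by Lemma \ref{lm:ygystar} (ii) (it is $yey^{\ast} \geq 0$, and it is $\leq p^{\perp} \leq 1$ since it lies in $p^{\perp}Ap^{\perp}$ and $s^2 - \alpha^{-1}b^2 \leq s^2 \leq 1$). I must show $m = e \wedge p^{\perp}$ in $E$, i.e. (a) $m \leq e$, (b) $m \leq p^{\perp}$, and (c) every effect $f \in E$ with $f \leq e$ and $f \leq p^{\perp}$ satisfies $f \leq m$. For (b), $m = (\cdots)p^{\perp} \in p^{\perp}Ap^{\perp}$ with $s^2 - \alpha^{-1}b^2 \leq 1$, so $m \leq p^{\perp}$. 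For (a), $m = e - \alpha^{-1}epe$ and $\alpha^{-1}epe \geq 0$, so $m \leq e$.

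The heart is (c). Given $0 \leq f \leq p^{\perp}$ and $f \leq e$, I want $f \leq e - \alpha^{-1}epe$. The idea is to apply the order-preserving map $g \mapsto ygy^{\ast}$ (linear and order-preserving, as noted after Definition \ref{df:a&y}) to the inequality $f \leq e$: this yields $yfy^{\ast} \leq yey^{\ast}$. By Lemma \ref{lm:ygystar} (i), since $0 \leq f \leq p^{\perp}$ we get $yfy^{\ast} = f$; and by Lemma \ref{lm:ygystar} (ii), $yey^{\ast} = m$. Hence $f = yfy^{\ast} \leq yey^{\ast} = m$, which is exactly (c). Combining (a), (b), (c) shows $m$ is the infimum $e \wedge p^{\perp}$ in $E$.

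The main obstacle is purely bookkeeping: making sure the quadratic-map argument is airtight, i.e. that $g \mapsto ygy^{\ast}$ genuinely is order-preserving on $A$ (it factors as $f \mapsto (1-a)f(1-a)$ followed by $g \mapsto p^{\perp}gp^{\perp}$, both order-preserving quadratic maps, using that $y = p^{\perp}(1-a)$ so $yfy^{\ast} = p^{\perp}(1-a)f(1-a)p^{\perp}$), and that the two identities in Lemma \ref{lm:ygystar} are correctly invoked — in particular that $yey^{\ast}$ really equals $e - \alpha^{-1}epe$, which rests on Lemma \ref{lm:MB01} (iv). Everything else is immediate from the CBS-decomposition and the cited lemmas, so no serious difficulty is anticipated beyond assembling these pieces in the right order.
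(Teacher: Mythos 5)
Your proposal is correct and follows essentially the same route as the paper: the $\alpha=0$ case via $pep=0\Rightarrow pe=ep=0\Rightarrow e\leq p\sp{\perp}$, and the $\alpha>0$ case by applying the order-preserving map $g\mapsto ygy\sp{\ast}$ together with Lemma \ref{lm:ygystar} (the paper phrases step (c) as $0\leq y(e-f)y\sp{\ast}=yey\sp{\ast}-yfy\sp{\ast}$, which is the same computation). The only blemish is the hand-wavy ``therefore $bk=0$'' in your primary $\alpha=0$ argument, but your parenthetical alternative there is exactly the paper's (correct) argument, so nothing is missing.
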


\begin{proof}
If $\alpha=0$, then $pep=0$, and as $0\leq e$ it follows that $pe=ep=0$
(\cite[Axiom SA4]{FSynap}), whence $e\leq p\sp{\perp}$, so $e=e\wedge p
\sp{\perp}$.

Now suppose that $\alpha>0$. By Lemma \ref{lm:ygystar} (ii), $0\leq(s\sp{2}
-\alpha\sp{-1}b\sp{2})p\sp{\perp}=e-\alpha\sp{-1}epe$. Since $0\leq epe$,
we also have $e-\alpha\sp{-1}epe\leq e\leq 1$, so $e-\alpha\sp{-1}epe\in E$.
Moreover, $e-\alpha\sp{-1}epe=(s\sp{2}-\alpha\sp{-1}b\sp{2})p\sp{\perp}
\leq p\sp{\perp}$. Suppose that $f\in E$ with $f\leq e,p\sp{\perp}$.  Then
by Lemma \ref{lm:ygystar} (i), $0\leq y(e-f)y\sp{\ast}=yey\sp{\ast}-yfy
\sp{\ast}=(s\sp{2}-\alpha\sp{-1}b\sp{2})p\sp{\perp}-f$, whence $f\leq
(s\sp{2}-\alpha\sp{-1}b\sp{2})p\sp{\perp}$, and it follows that $e\wedge p
\sp{\perp}=(s\sp{2}-\alpha\sp{-1}b\sp{2})p\sp{\perp}=e-\alpha\sp{-1}epe$.
\end{proof}

\begin{corollary} [{Cf. \cite[Corollaries 3.9 and 3.10]{MG}}] \label{co:MG3.9}
\
\begin{enumerate}
\item If $p\sb{1}, p\sb{2}, ..., p\sb{n}$ is a finite sequence of mutually
orthogonal atoms in $P$, then $e\wedge(p\sb{1}\vee p\sb{2}\vee\cdots\vee p
\sb{n})\sp{\perp}$ exists in $E$.
\item Suppose that every nonzero projection in $P$ is a supremum of a
finite sequence of mutually orthogonal atoms in $P$. Then, for all
$q\in P$, the infimum $e\wedge q$ exists in $E$.
\end{enumerate}
\end{corollary}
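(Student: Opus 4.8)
The plan is to establish (i) by induction on $n$, with Theorem~\ref{th:MGL3.8} doing the work at every stage, and then to read off (ii) from (i) by passing to orthocomplements.

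For (i), the case $n=1$ is precisely Theorem~\ref{th:MGL3.8} applied with $p:=p\sb{1}$. For the inductive step, put $q:=p\sb{1}\vee\cdots\vee p\sb{n-1}$ and assume that $f:=e\wedge q\sp{\perp}$ exists in $E$. De\,Morgan in the OML $P$ gives $(p\sb{1}\vee\cdots\vee p\sb{n})\sp{\perp}=q\sp{\perp}\wedge p\sb{n}\sp{\perp}$. I would then apply Theorem~\ref{th:MGL3.8} once more, now with the effect $e$ replaced by $f$ and the atom $p$ replaced by $p\sb{n}$, to conclude that $f\wedge p\sb{n}\sp{\perp}$ exists in $E$. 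What remains is to identify this effect with $e\wedge(p\sb{1}\vee\cdots\vee p\sb{n})\sp{\perp}$, and this is the only step that is not a direct citation. Here I would record the elementary fact that in any poset, if $a\wedge b$ exists then the lower bounds of $\{a,b,c\}$ are exactly the lower bounds of $\{a\wedge b,c\}$, so one infimum exists iff the other does and they coincide. Applying this with $a=e$, $b=q\sp{\perp}$, $c=p\sb{n}\sp{\perp}$, together with the observation (Lemma~\ref{lm:infsupinP}) that the projection meet $q\sp{\perp}\wedge p\sb{n}\sp{\perp}$ is also the infimum of those two effects in $E$, yields $e\wedge(p\sb{1}\vee\cdots\vee p\sb{n})\sp{\perp}=(e\wedge q\sp{\perp})\wedge p\sb{n}\sp{\perp}=f\wedge p\sb{n}\sp{\perp}$, which exists. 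This closes the induction.

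For (ii), let $q\in P$. Either $q=1$, in which case $e\wedge q=e$, or $q\sp{\perp}\neq0$, in which case the hypothesis supplies mutually orthogonal atoms $p\sb{1},\ldots,p\sb{n}$ with $q\sp{\perp}=p\sb{1}\vee\cdots\vee p\sb{n}$, so $q=(p\sb{1}\vee\cdots\vee p\sb{n})\sp{\perp}$ and $e\wedge q$ exists by (i). The only real obstacle is the bookkeeping in the inductive step: one must handle the associativity of partial infima and must apply Theorem~\ref{th:MGL3.8} with the effect changing from stage to stage. The latter is legitimate precisely because Theorem~\ref{th:MGL3.8} holds for an arbitrary effect together with an arbitrary atom, and not just for the pair fixed in the Standing Assumptions of this section; beyond that, the argument is nothing more than repeated use of Theorem~\ref{th:MGL3.8}, De\,Morgan, and Lemma~\ref{lm:infsupinP}.
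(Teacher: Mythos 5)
Your proposal is correct and follows essentially the same route as the paper: induction on $n$, applying Theorem \ref{th:MGL3.8} at each stage to the previously obtained infimum (which is legitimate since that theorem applies to any effect and any atom), and then deducing (ii) by passing to orthocomplements. The paper merely compresses the associativity-of-partial-infima step into the displayed identity $(e\wedge p\sb{1}\sp{\,\perp})\wedge p\sb{2}\sp{\,\perp}=e\wedge(p\sb{1}\vee p\sb{2})\sp{\perp}$, which you justify explicitly.
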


\begin{proof}

(i) The infimum $e\wedge p\sb{1}\sp{\,\perp}$ exists by Theorem \ref
{th:MGL3.8}. Similarly, as $e\wedge p\sb{1}\sp{\,\perp}\in E$, the
infimum $(e\wedge p\sb{1}\sp{\,\perp})\wedge p\sb{2}\sp{\,\perp}=
e\wedge(p\sb{1}\vee p\sb{2})\sp{\perp}$ exists in $E$. Continuing in
this way by induction, we obtain (i).

(ii) Obviously, $e\wedge 1=e$, so we can assume that $q\not=1$, whence
$q\sp{\perp}\not=0$. Therefore by hypothesis, there is a finite
sequence $p\sb{1}, p\sb{2}, ..., p\sb{n}$ of mutually orthogonal atoms
in $P$ such that $q\sp{\perp}=p\sb{1}\vee p\sb{2}\vee\cdots\vee p
\sb{n}$, and it follows from (i) that $e\wedge q$ exists in $E$.
\end{proof}

The synaptic algebra $A$ is said to be of \emph{rank} $r$, $r=1,2,3,...$
iff there are $r$, but not $r+1$ mutually orthogonal nonzero projections
in $P$. Clearly, a synaptic algebra of rank $r$ satisfies the hypothesis
of Corollary \ref{co:MG3.9} (ii). By \cite{FPSpin} and \cite[Corollary 4.4]
{FPSynap}, a positive-definite spin factor of dimension 2 or more is the
same thing as a synaptic algebra of rank 2. Therefore:

\begin{corollary}
If $A$ is a positive-definite spin factor of dimension 2 or more, $e\in E$,
and $q\in P$, then $e\wedge q$ exists in $E$.
\end{corollary}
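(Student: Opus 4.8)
The plan is to deduce this corollary directly from the two immediately preceding results, without re-entering the synaptic-algebra machinery. The stated hypothesis is that $A$ is a positive-definite spin factor of dimension $2$ or more; by the citation to \cite{FPSpin} and \cite[Corollary 4.4]{FPSynap} recalled just before the statement, such an $A$ is precisely a synaptic algebra of rank $2$. So the first step is simply to record that identification: $A$ has rank $2$ in the sense defined in the paragraph above, i.e., there are $2$ but not $3$ mutually orthogonal nonzero projections in $P$.

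Next I would check that a synaptic algebra of rank $r$ satisfies the hypothesis of Corollary \ref{co:MG3.9}(ii), namely that every nonzero projection is a join of finitely many mutually orthogonal atoms. This is the one place where a (short) argument is needed rather than a pure citation. Given $0\neq q\in P$, pass to the synaptic algebra $qAq$ with unit $q$; its projection lattice is $P[0,q]$, which has finite ``length'' bounded by $r$ because a chain of mutually orthogonal nonzero projections below $q$ is in particular such a chain in $P$. A lattice-theoretic induction on this length then writes $q$ as a finite orthogonal join of atoms of $P[0,q]$; and an atom of $P[0,q]$ is an atom of $P$, since any nonzero projection in $P$ dominated by it lies in $P[0,q]$. (For $r=2$ this is especially transparent: a nonzero $q$ is either an atom, or else it dominates some atom $p_1$, and then $q\wedge p_1^{\perp}$ is a nonzero projection orthogonal to $p_1$ and below $q$, hence an atom by the rank-$2$ bound, giving $q=p_1\vee(q\wedge p_1^{\perp})$.) This is exactly the remark ``Clearly, a synaptic algebra of rank $r$ satisfies the hypothesis of Corollary \ref{co:MG3.9}(ii)'' made in the text, so it may legitimately be invoked as already established.

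With that in hand, Corollary \ref{co:MG3.9}(ii) applies verbatim: for every $q\in P$ the infimum $e\wedge q$ exists in $E$, which is precisely the assertion of the corollary. So the proof is a two-line chain: rank $2$ $\Rightarrow$ hypothesis of Corollary \ref{co:MG3.9}(ii) holds $\Rightarrow$ $e\wedge q$ exists for all $q\in P$.

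I do not expect any genuine obstacle here; the mathematical content has all been discharged in Theorem \ref{th:MGL3.8} and Corollary \ref{co:MG3.9}. The only thing to be careful about is the bookkeeping: making sure the cited equivalence (positive-definite spin factor of dimension $\geq 2$ $\Leftrightarrow$ rank-$2$ synaptic algebra) is stated in the right direction and that ``rank $2$ implies the atom-decomposition hypothesis'' is either treated as the already-asserted remark or given the one-line inductive justification sketched above. Everything else is immediate.
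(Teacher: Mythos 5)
Your proof is correct and follows exactly the route the paper intends: identify a positive-definite spin factor of dimension $\geq 2$ with a rank-$2$ synaptic algebra, note that rank $r$ forces every nonzero projection to be a finite orthogonal join of atoms, and apply Corollary \ref{co:MG3.9}(ii). The paper leaves the corollary without an explicit proof precisely because these are the remarks immediately preceding it; your inductive justification of the ``clearly'' step is a harmless (and correct) elaboration.
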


\noindent We note that there are infinite-dimensional positive-definite spin
factors.

\end{document}